\begin{document}

\newtheorem{thm}{Theorem}[section]
\newtheorem*{mtheorem}{Main Theorem}

\newtheorem{prop}{Proposition}[section]
\newtheorem*{propA}{Proposition A}
\newtheorem*{propB}{Proposition B}
\newtheorem*{propC}{Proposition C}

\newtheorem{lem}{Lemma}[section]

\newtheorem{cor}{Corollary}[section]

\theoremstyle{definition}
\newtheorem{definition}{Definition}[section]

\theoremstyle{remark}
\newtheorem{remark}{Remark}[section]

\setcounter{equation}{0}

\title{Noncompact shrinking four solitons with nonnegative curvature}

\author{By \textit{Aaron Naber}}
%\thanks{Department of Mathematics, Princeton University,
%        Princeton, NJ 08544 ({\tt anaber@math.princeton.edu}).}}

\date{\today}
\maketitle
\begin{abstract}
We prove the following:  Let $(M,g,X)$ be a noncompact four dimensional shrinking soliton with bounded nonnegative curvature operator, then $(M,g)$ is isometric to $\mathds{R}^{4}$ or a finite quotient of $S^{2}\times\mathds{R}^{2}$ or $S^{3}\times\mathds{R}$.  In the process we also show that a complete shrinking soliton $(M,g,X)$ with bounded curvature is gradient and $\kappa$-noncollapsed and the dilation of a Type I singularity is a shrinking soliton.  Further in dimension three we show shrinking solitons with bounded curvature can be classified under only the assumption of $Rc\geq 0$.
\end{abstract}

%%% ----------------------------------------------------------------------
\section{Introduction}
\numberwithin{equation}{section}

The study of solitons has become increasingly important in both the study of the Ricci Flow and in metric measure theory.  Solitons play a direct role as singularity dilations in the Ricci Flow proof of uniformization, see \cite{ChKn}, and more recently Perelman proved shrinking solitons play a role in the analysis of finite time singularities of all three dimensional Ricci Flows.  Under assumptions of nonnegative curvature and $\kappa$-noncollapseness Perelman classified three dimensional gradient shrinking solitons \cite{P} for the purpose of studying such singularities.  Given the importance of shrinking solitons for singularity dilations there has been much effort, see \cite{Ni} \cite{NiWa} \cite{NiWa2} for instance, to extend this classification to higher dimensions.  The main results of this paper are to extend Perelman's result to the four dimensional case and prove some structural theorems about shrinking solitons in any dimension.  We will prove that there is an \textit{a priori} lower injectivity radius bound on shrinking solitons which depends only on the curvature, soliton constant, and its $f$-volume.  It was proved by Perelman in \cite{P} that a compact shrinking soliton is always gradient, and we will extend this to noncompact shrinking solitons.  Also as an application of the estimates we will see that a Type I singularity always dilates to a shrinking soliton.  To make these statements more precise we begin with some definitions:

\begin{definition}
Let $(M,g,X)$ be a smooth complete Riemannian manifold with $X$ a complete vector field. We call $M$ a Ricci soliton if $Rc+\frac{1}{2}\mathcal{L}_{X}g=\lambda g$, where $\lambda\in\mathds{R}$. We say the soliton is shrinking,
steady, or expanding when $\lambda >0, =0, <0$, respectively.  If $(M,g,f)$ is a smooth Riemannian manifold with $f$ a smooth function such that $(M,g,\nabla f)$ is a soliton, we call $(M,g,f)$ a gradient soliton and $f$ the soliton function.
\end{definition}

We note that the soliton function for a soliton $(M,g,f)$ is well defined only up to a linear function.  By tracing the soliton equation $Rc+\nabla^{2}f=\lambda g$ with $\nabla f$ and with $g$ we get the equations $R+|\nabla f|^{2}+2\lambda f=const$ and $R+\triangle f =n\lambda$.  Combining shows that any soliton function satisfies an equation of the form $\triangle f -|\nabla f|^{2} + 2\lambda f=constant$, where the constant depends on the normalizing choice of $f$.  Motivated by this and what is to come we make the definition:

\begin{definition}
Let $(M,g,f)$ be a smooth soliton.  By rescaling $g$ and changing $f$ by a constant we can assume $\lambda \in \{-\frac{1}{2}, 0, \frac{1}{2}\}$ and $\triangle f -|\nabla f|^{2} + 2\lambda f=n\lambda$.  We call such a soliton normalized, and $f$ a normalized soliton function.  We define the $f$-volume of a soliton by $Vol_{f}(M)=\int_{M}e^{-f}dv_{g}$
\end{definition}

Although being a soliton is a purely static condition, the soliton
structure is closely related to the Ricci flow equation.  It turns
out some of the properties of a soliton are most natural to exploit
in this context, and though the following is standard we present it
for convenience because we will use it often.

\begin{lem}
Let $(M,g,f)$ be a normalized shrinking soliton on a complete manifold
with bounded curvature.  Then there exists a Ricci flow $(M,g(t))$,
$t\in (-\infty,0)$ with $g(-1)=g$
\end{lem}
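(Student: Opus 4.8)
The plan is to build the Ricci flow directly from the soliton structure rather than invoking short--time existence. After rescaling $g$ and adding a constant to $f$ we may assume the soliton is normalized, so $\lambda=\tfrac12$ and $Rc+\nabla^{2}f=\tfrac12 g$. Let $\phi_{t}$, for $t$ near $-1$ in $(-\infty,0)$, be the one--parameter family of diffeomorphisms of $M$ with $\phi_{-1}=\mathrm{id}_{M}$ generated by the time--dependent vector field $-\tfrac1t\nabla f$, and set $g(t):=(-t)\,\phi_{t}^{*}g$. Using $\mathcal{L}_{\nabla f}g=2\nabla^{2}f$, the scale invariance of the Ricci tensor, and $Rc=\tfrac12 g-\nabla^{2}f$, one computes
\[
\partial_{t}g(t)=-\phi_{t}^{*}g+\phi_{t}^{*}\mathcal{L}_{\nabla f}g=\phi_{t}^{*}\!\left(2\nabla^{2}f-g\right)=-2\,\phi_{t}^{*}Rc(g)=-2\,Rc(g(t)),
\]
while $g(-1)=g$ is immediate. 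So everything reduces to showing the family $\phi_{t}$ is defined for all $t\in(-\infty,0)$, which is automatic when $M$ is compact and is the one place where noncompactness, and the curvature bound, enter.

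To extend the family I would reparametrize time by $s=-\log(-t)$: this turns the generating field $-\tfrac1t\nabla f$ into the time--independent field $\nabla f$, with $s$ running over all of $\mathds{R}$ as $t$ runs over $(-\infty,0)$, so it suffices to show that $\nabla f$ is a \emph{complete} vector field. This is where bounded curvature is used. Since $Rc$ is bounded, $|\nabla^{2}f|=|\tfrac12 g-Rc|$ is bounded by some constant $C$, hence $\bigl|\nabla|\nabla f|\bigr|\le|\nabla^{2}f|\le C$ and $|\nabla f|$ is $C$--Lipschitz, so integrating along a minimizing geodesic gives $|\nabla f|(x)\le|\nabla f|(p)+C\,d_{g}(p,x)$: at most linear growth from a fixed basepoint $p$. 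A vector field of at most linear growth on a complete manifold is complete: for a maximal integral curve $\gamma$ of $\nabla f$, the function $\rho(s)=d_{g}(p,\gamma(s))$ satisfies $|\rho'(s)|\le|\nabla f|(\gamma(s))\le a+b\,\rho(s)$, so by a Gronwall estimate $\rho$ remains finite on every finite time interval; thus $\gamma$ stays in a compact set over finite times and cannot fail to extend.

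With $\phi_{t}$ now defined on all of $(-\infty,0)$, the family $g(t)=(-t)\,\phi_{t}^{*}g$ is the desired solution: each $g(t)$ is a complete metric with bounded curvature, being a constant rescaling of a diffeomorphic copy of $g$, and the displayed computation shows it solves the Ricci flow equation with $g(-1)=g$. I expect the completeness of $\nabla f$ to be the only genuine obstacle — the rest is the standard formal verification — and it is precisely for this step that the boundedness of the curvature is invoked; without it the construction would yield only a local flow. (One could alternatively appeal to the uniqueness theory for complete Ricci flows of bounded curvature to identify this $g(t)$ with the flow coming from short--time existence, but that is not needed for the statement.)
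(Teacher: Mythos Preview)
Your proof is correct and follows essentially the same route as the paper: define $g(t)=(-t)\phi_t^*g$ with $\phi_t$ generated by $-\tfrac{1}{t}\nabla f$, use bounded curvature to bound $|\nabla^2 f|$ and hence obtain completeness of $\nabla f$, and verify the Ricci flow equation by direct computation. Your reparametrization $s=-\log(-t)$ and Gronwall argument spell out in more detail the step the paper compresses into ``$|\nabla^2 f|$ bounded implies $\nabla f$ complete,'' but the overall strategy is the same.
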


\begin{proof}
Since the curvature is bounded so is $|\nabla^{2}f|$, and in
particular $X=\nabla f$ is a complete vector field.  Let
$\phi_{t}:M\rightarrow M$ be the diffeomorphisms such that $\phi_{-1}=id$
and $\frac{d}{dt}\phi_{t}(x)= Y_{t}(x) = \frac{1}{-t}X(x)$.  Let
$g(t)=(-t)\phi_{t}^{*}g$.  Then $\frac{d}{dt}g(t) = -\phi_{t}^{*}g
+(-t)\phi_{t}^{*}(\mathcal{L}_{Y}g) = -2\phi_{t}^{*}(\frac{1}{2}g-\frac{1}{2}\mathcal{L}_{X}g)
= -2\phi_{t}^{*}Rc[g] = -2Rc[g(t)]$
\end{proof}

For a normalized shrinking soliton we call the above the associated
Ricci flow.  We will often go back and forth without worry.  We could of course have just as easily used a nongradient soliton in the above.  For notational sake we will often denote $\tau=-t$.

One more definition that will be used frequently is the notion of being noncollapsed:

\begin{definition}
Let $(M,g(t))$ be a Ricci flow, $t\in [0,T]$.  Let $\kappa>0$.  We say
the Ricci flow is $\kappa$-noncollapsed if for $\forall$ $(x,s)\in
M\times(0,T]$ and $r>0$ such that the parabolic ball of radius $r$, $P(x,s,r)\equiv
B_{g(s)}(x,r)\times(s-r^{2},s]$, has compact closure contained in $M\times(0,T]$ and $|Rm|_{g(s)}\leq r^{-2}$
in $P(x,s,r)$, then $Vol(B_{g(s)}(x,r))\geq \kappa r^{n}$, where
$n=dim(M)$.
\end{definition}

The main theorem of this paper is the following:

\begin{mtheorem}\label{mthm1}
Let $(M,g,X)$ be a noncompact four dimensional shrinking soliton
with bounded nonnegative curvature operator, then $(M,g)$ is isometric to  $\mathds{R}^{4}$, or a finite quotient of $S^{2}\times\mathds{R}^{2}$ or $S^{3}\times\mathds{R}$.
\end{mtheorem}

Along the way we will also prove the following theorems

\begin{thm}\label{mthm2}
Let $(M,g,X)$ be a three dimensional shrinking soliton with bounded curvature and $Rc\geq 0$.  Then $(M,g)$ is isometric to $\mathds{R}^{3}$ or to a finite quotient of $S^{3}$ or $\mathds{R}\times S^{2}$.
\end{thm}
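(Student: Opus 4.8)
The plan is to reduce to the gradient, normalized, $\kappa$-noncollapsed setting, use the associated ancient Ricci flow to upgrade the curvature hypothesis, and then run the standard trichotomy. By the other results established in this paper---that a complete shrinking soliton with bounded curvature is gradient and $\kappa$-noncollapsed---we may assume $X=\nabla f$, that the soliton is normalized with $\lambda=\tfrac12$, and that $(M,g)$ is $\kappa$-noncollapsed. By Lemma 1.1 there is an associated Ricci flow $(M,g(t))$, $t\in(-\infty,0)$, with $g(-1)=g$; this is a complete ancient solution whose curvature is bounded on each $[t_{1},t_{2}]\subset(-\infty,0)$. Since $\dim M=3$ I would apply the Hamilton--Ivey pinching estimate to this ancient flow: rescaling so that the least eigenvalue of the curvature operator is $\ge -1$ at a far-back time $t_{1}$ and letting $t_{1}\to-\infty$ forces that eigenvalue to be nonnegative at all later times, so that $Rm\ge 0$ on $(M,g)$. (This is the step that uses only a lower bound on $Rc$, in fact only the curvature bound, rather than a bound on the full curvature operator.) From here on $(M,g)$ has bounded \emph{nonnegative curvature operator}.

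Now apply Hamilton's strong maximum principle to the curvature operator along $g(t)$. This yields a trichotomy: (i) $(M,g)$ is flat; (ii) the universal cover of $(M,g)$ splits isometrically as $\mathds{R}\times\Sigma^{2}$ with $\Sigma^{2}$ positively curved; or (iii) $Rm>0$ everywhere on $M$. In case (i) a complete flat shrinking soliton must be $\mathds{R}^{3}$: a compact flat quotient contradicts $\int_{M}\triangle f\,dv=0$ versus $\triangle f=\tfrac{n}{2}>0$, and the only complete noncompact flat manifold admitting $\nabla^{2}f=\tfrac12 g$ is $(\mathds{R}^{3},\tfrac14|x|^{2})$ by Tashiro's theorem. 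In case (ii), $f$ splits as $f_{1}(t)+f_{2}(y)$ on $\mathds{R}\times\Sigma^{2}$, so $\Sigma^{2}$ is a complete, simply connected two dimensional gradient shrinking soliton with bounded positive curvature, hence by Hamilton's two dimensional classification it is the round $S^{2}$; therefore $(M,g)$ is a finite quotient of $\mathds{R}\times S^{2}$. In case (iii) with $M$ compact, Ivey's theorem forces a compact three dimensional shrinking soliton to be Einstein, hence a positively curved space form, i.e.\ a finite quotient of $S^{3}$.

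The remaining possibility---and what I expect to be the main obstacle---is case (iii) with $M$ noncompact; this must be shown to be vacuous, since none of the asserted models is noncompact with bounded strictly positive curvature. Here I would argue in the spirit of Perelman: since the soliton is gradient shrinking, $f$ is proper with $f(x)\sim\tfrac14\,d(p,x)^{2}$ (Cao--Zhou), so the sublevel sets exhaust $M$; choosing points $x_{j}\to\infty$, parabolically rescaling the ancient flow at $x_{j}$, and applying Hamilton's compactness theorem (using $\kappa$-noncollapsing together with the curvature bound) produces a limiting ancient solution which by dimension reduction splits off a line. Pulling this back shows $(M,g)$ is asymptotically cylindrical at infinity; combined with the soliton identity $R+|\nabla f|^{2}=f+\mathrm{const}$ (so that $R$ does not decay to $0$) and the real-analyticity of soliton metrics, this contradicts $Rm>0$ on a noncompact manifold. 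Alternatively the contradiction can be phrased through the theory of three dimensional $\kappa$-solutions. This dimension-reduction argument is where $\kappa$-noncollapsing is indispensable and where most of the effort lies; it is also the part one would want arranged so that it, or a substitute, survives into the four dimensional setting of the Main Theorem. Once case (iii) noncompact is excluded, the trichotomy gives exactly $\mathds{R}^{3}$, a finite quotient of $S^{3}$, or a finite quotient of $\mathds{R}\times S^{2}$.
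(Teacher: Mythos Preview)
Your route is valid but genuinely different from the paper's. You use Hamilton--Ivey on the associated ancient flow to upgrade the hypothesis to $sec\ge 0$, after which the trichotomy and Perelman's classification of three-dimensional $\kappa$-noncollapsed gradient shrinkers with nonnegative sectional curvature do the work; this is essentially the maximum-principle approach of Ni--Wallach that the paper's remark cites. The paper deliberately does \emph{not} upgrade to $sec\ge 0$. Instead it works directly with $Rc\ge 0$: using the asymptotic-soliton machinery (Corollary~\ref{asym_int}) it finds, along any integral curve of $f$, points $x_n\to\infty$ with $(M,g,x_n)\to\mathds{R}\times S^{2}/\Gamma$; since $\nabla_{N}R=2Rc(N,\nabla f)\ge 0$ this forces $R\le 2\lambda$ on $M$, which in turn gives the new mean-curvature estimate $H=\frac{(n-1)\lambda-R}{|\nabla f|}\ge 0$ on the level sets $N_s=f^{-1}(s)$. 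The noncompact $Rc>0$ case is then excluded by a Gauss-equation computation showing $R_{N_s}<2\lambda$ together with $\mathrm{Vol}(N_s)<\mathrm{Vol}(S^2/\Gamma)$, contradicting Gauss--Bonnet. What your shortcut buys is brevity in dimension three; what the paper's route buys is that neither the splitting at infinity nor the mean-curvature estimate uses Hamilton--Ivey, so both survive to $n=4$, where they are the engine of the Main Theorem.

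One caution on your case (iii) noncompact: the specific contradiction you propose---asymptotic cylindricality plus ``$R$ does not decay to $0$'' plus real-analyticity---does not by itself rule out $Rm>0$; nothing prevents a strictly positively curved metric from being asymptotically cylindrical. Your fallback to Perelman's classification (once you have $sec\ge 0$, gradient, $\kappa$-noncollapsed) is the correct way to close that case, and in fact Perelman's own argument there is precisely the level-set/Gauss--Bonnet computation the paper generalizes.
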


\begin{remark}
The above was proved by Perelman in \cite{P} under the assumptions that the soliton is gradient, $\kappa$-noncollapsed and $sec\geq 0$.  The dropping of  the $\kappa$-noncollapsed assumption follows from the next theorem, while the reducing of nonnegative sectional to nonnegative Ricci follows by a different splitting lemma at infinity and a new estimate on the mean curvature of soliton hypersurfaces.  Under the additional assumption of being gradient, though not $\kappa$-noncollapsed, the above was proved in \cite{NiWa2} by using techniques more in line with maximum principles.
\end{remark}

\begin{thm}\label{mthm3}
Let $(M,g,X)$ be a shrinking soliton with bounded curvature.  Then there exists a smooth function $f:M\rightarrow \mathds{R}$ such that $(M,g,f)$ is a gradient shrinking soliton.
\end{thm}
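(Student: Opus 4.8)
\emph{Step 1 (the associated ancient flow).} My approach is to translate the static equation into the associated Ricci flow, recognise that flow as a self-similar ancient solution of Type~I, and then identify the soliton itself as the flow's blow-down, which Perelman-type monotonicity forces to be \emph{gradient}. After rescaling $g$ we may assume $Rc+\tfrac12\mathcal L_Xg=\tfrac12g$. Since the curvature is bounded, the construction of the associated Ricci flow given above for gradient solitons goes through verbatim for the (possibly non-gradient) complete field $X$, producing $(M,g(t))$, $t\in(-\infty,0)$, with $g(-1)=g$ and $g(t)=(-t)\,\phi_t^{*}g$. Hence $(M,g(t))$ is isometric to $(-t)(M,g)$, so $|Rm|_{g(t)}\le C_0/(-t)$ with $C_0=\sup_M|Rm|_g<\infty$: the flow is ancient and Type~I. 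It is moreover \emph{self-similar} --- writing $\Phi_r$ for the flow of $X$ one checks $\phi_t=\Phi_{-\log(-t)}$, so that for every $\mu>0$ the parabolic rescaling $\mu^{-1}g(\mu t)$ equals $\Phi_{-\log\mu}^{*}g(t)$, i.e.\ the flow is isometric to each of its parabolic rescalings by a fixed diffeomorphism.

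\emph{Step 2 (reduced geometry of the ancient flow).} I would first show $(M,g(t))$ is $\kappa$-noncollapsed for some $\kappa>0$, via Perelman's reduced-volume no-local-collapsing argument applied to the ancient flow. Then, fixing a base point $(p,t_0)$ with $t_0<0$ and setting $\tau=t_0-t$, I would analyse Perelman's reduced distance $\ell(\cdot,\tau)$ and reduced volume $\tilde V(\tau)=\int_M(4\pi\tau)^{-n/2}e^{-\ell(\cdot,\tau)}\,dv_{g(t_0-\tau)}$. The Type~I bound yields a priori estimates on $\ell$ --- local Lipschitz bounds, a barrier-sense differential Harnack inequality, and control of the minimising $\mathcal L$-geodesics at every scale --- strong enough that, along $\tau_i\to\infty$, the parabolically rescaled flows $g_i(s)=\tau_i^{-1}g(t_0+\tau_i s)$ converge smoothly (using Type~I, $\kappa$-noncollapsing and Shi's estimates) to a complete ancient flow, while the rescaled reduced distances converge to a locally Lipschitz $\ell_\infty$ on the limit and $\tilde V(\tau_i)\to\bar V:=\lim_{\tau\to\infty}\tilde V(\tau)\in(0,1]$. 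Since $\tilde V$ is monotone, equality is attained in the limit of the reduced-volume monotonicity, and Perelman's rigidity identifies the limit flow as a gradient shrinking soliton with potential $\ell_\infty$. Carrying this whole $\mathcal L$-geometry package onto a complete non-compact manifold with no curvature sign is the genuine difficulty here, and it is precisely the Type~I bound supplied by self-similarity together with the $\kappa$-noncollapsing that make it go through.

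\emph{Step 3 (the blow-down is the soliton).} Finally I would invoke Step~1 to identify the limit flow: $g_i(s)=\tau_i^{-1}g(\tau_i(s+t_0/\tau_i))$, and $\tau_i^{-1}g(\tau_i\,\cdot)=\Phi_{-\log\tau_i}^{*}g(\cdot)$ is isometric to $g(\cdot)$ while the time-shift $t_0/\tau_i\to0$ washes out (the $t$-derivatives of $g(t)$ being controlled by the Type~I bound), and the centres $q_i$ of the blow-down stay in a fixed region by the $\ell\le n/2$ estimate and $\kappa$-noncollapsing. Hence the gradient shrinking soliton produced in Step~2 is isometric, as a Ricci flow, to $(M,g(t))$ itself, so $(M,g)=(M,g(-1))$ is a time-slice of a gradient shrinking soliton: there is a smooth $f$ with $Rc[g]+\nabla^2f=\tfrac12g$, i.e.\ $(M,g,f)$ is a normalised gradient shrinking soliton, which is the theorem. (Subtracting this equation from the original soliton equation shows only that $\nabla f-X$ is a Killing field rather than that $\nabla f=X$, which is harmless since the statement asks merely for some gradient structure.)
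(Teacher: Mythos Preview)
Your overall strategy---build the self-similar ancient Type~I flow, blow it down, use reduced-volume rigidity to make the limit gradient, and then identify the limit with $(M,g)$ by self-similarity---is exactly the paper's. The difficulty, and the place where your write-up has a genuine gap, is in the basepoint control that underlies both your Step~2 and your Step~3.

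In Step~2 you assert $\kappa$-noncollapsing ``via Perelman's reduced-volume no-local-collapsing argument applied to the ancient flow.'' Perelman's argument needs a uniform lower bound on the reduced volume at some scale, and for an \emph{ancient} flow there is no initial time to anchor this; Type~I alone does not suffice (the cigar is an ancient Type~I flow which is collapsed). In Step~3 your basepoint control ``by the $\ell\le n/2$ estimate and $\kappa$-noncollapsing'' is then circular, and in any case the $\ell\le n/2$ points live at time $t_0-\tau_i$ and do not obviously sit in a fixed region of $(M,g(-1))$ after you undo the self-similarity diffeomorphism.

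The paper closes this gap with one extra geometric input you did not invoke: a second-variation computation (valid for the non-gradient inequality $Rc+\tfrac12\mathcal L_Xg=\lambda g$ with bounded $Rc$) gives $\langle X,\nabla d(\cdot,p)\rangle\ge \lambda\, d(\cdot,p)-a$. Thus $X$ points strictly outward on all large spheres, so the backward flow $\Phi_{-s}$ of $X$ traps every point inside a fixed ball $B_r(p)$. Since your parabolic rescaling is precisely pullback by this flow, the pointed rescaled flows $(M,\tau_i^{-1}g(\tau_i t),x)$ are \emph{isometric} to $(M,g(t),\Phi_{-\log\tau_i}(x))$ with $\Phi_{-\log\tau_i}(x)\in B_r(p)$; hence they converge to $(M,g(t))$ itself \emph{without} any appeal to Hamilton compactness or a priori $\kappa$-noncollapsing. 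The Type~I estimates on the (singular) reduced length---which, as the paper notes, do not use noncollapsing---then pass to the limit and produce a normalized gradient soliton function on $(M,g)$. So the repair is to replace your compactness/$\kappa$-noncollapsing step by this growth estimate on $X$; with it the argument becomes both correct and shorter, and $\kappa$-noncollapsing follows afterwards as a corollary rather than being an input.
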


\begin{remark}
In the compact case this was proved in \cite{P}.  It is worth noting this is of course the best that can be said, in that a shrinking soliton $(M,g,X)$ may itself not be gradient, the above just states that there is a gradient structure on $(M,g)$.  For instance, take your favorite gradient shrinking soliton $(M,g,f)$, let $X$ be a nonparallel nontrivial killing field on $M$.  Then if $Y=\nabla f+X$ we see $(M,g,Y)$ is a nongradient shrinking soliton.  Of course the above states that any nongradient shrinking soliton has this form.
\end{remark}

\begin{thm}\label{mthm4}
Let $(M,g,f)$ be a normalized shrinking soliton with bounded curvature.  Then there exists $\kappa=\kappa(n,Vol_{f}(M))$ such that the associated Ricci flow is $\kappa$-noncollapsed.
\end{thm}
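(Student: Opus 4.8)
The plan is to run Perelman's noncollapsing argument in its entropy form, exploiting that for a normalized shrinking soliton the potential $f$ is, up to an additive constant, the minimizer of Perelman's $\mathcal{W}$-functional at the scale $\tau=1$ matched to the normalization $\lambda=\tfrac12$, and that the corresponding critical value is an explicit elementary function of $n$ and $Vol_{f}(M)$. Recall that $\mathcal{W}(g,w,\tau)=\int_{M}[\tau(R+|\nabla w|^{2})+w-n](4\pi\tau)^{-n/2}e^{-w}\,dv_{g}$, $\mu(g,\tau)=\inf\{\mathcal{W}(g,w,\tau):\int_{M}(4\pi\tau)^{-n/2}e^{-w}\,dv_{g}=1\}$, and $\nu(g)=\inf_{\tau>0}\mu(g,\tau)$. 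Perelman's local logarithmic Sobolev inequality shows that a lower bound $\nu(g)\geq -A$ forces a Ricci flow with that initial metric to be $\kappa(n,A)$-noncollapsed at all scales and times. Using the associated flow $g(t)=(-t)\phi_{t}^{*}g$ of Lemma~1.1 together with the scaling and diffeomorphism invariance of $\mu$ (so that $\mu(g(t),\sigma)=\mu(g,\sigma/(-t))$ for every $t<0$), the theorem reduces to the single estimate $\nu(g)\geq -C(n,Vol_{f}(M))$.

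The core step is to compute $\mu(g,1)$. Set $\hat f=f+c$ with $c=\log Vol_{f}(M)-\tfrac n2\log(4\pi)$, chosen so that $(4\pi)^{-n/2}e^{-\hat f}\,dv_{g}=Vol_{f}(M)^{-1}e^{-f}\,dv_{g}$ is a probability measure, so that $\hat f$ is admissible in the definition of $\mu(g,1)$. From the soliton identities --- tracing $Rc+\nabla^{2}f=\tfrac12 g$ with $g$ gives $R+\triangle f=\tfrac n2$, and combined with the normalized equation $\triangle f-|\nabla f|^{2}+f=\tfrac n2$ this gives $R=f-|\nabla f|^{2}$, so $R+|\nabla f|^{2}+f-n=2f-n$ and $2\triangle f-|\nabla f|^{2}+R+\hat f-n\equiv c$ --- one checks that $\hat f$ solves the Euler--Lagrange equation of $\mathcal{W}(g,\cdot,1)$ with Lagrange multiplier $c$; and it is a known fact for shrinking solitons (equivalent to a sharp logarithmic Sobolev inequality) that the potential is in fact the global minimizer, so that $\mu(g,1)=\mathcal{W}(g,\hat f,1)$. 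Finally, multiplying $\triangle f-|\nabla f|^{2}+f=\tfrac n2$ by $e^{-f}$, integrating over $M$, and integrating by parts gives $\int_{M}\triangle f\,e^{-f}\,dv_{g}=\int_{M}|\nabla f|^{2}e^{-f}\,dv_{g}$, hence $\int_{M}f\,e^{-f}\,dv_{g}=\tfrac n2 Vol_{f}(M)$; feeding this into $\mathcal{W}(g,\hat f,1)=Vol_{f}(M)^{-1}\int_{M}[(2f-n)+c]e^{-f}\,dv_{g}$ yields the clean identity $\mu(g,1)=\log Vol_{f}(M)-\tfrac n2\log(4\pi)$.

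It remains to pass from $\mu(g,1)$ to $\nu(g)$. Fix $\tau'>0$. Since $g(-1)=g$, Perelman's monotonicity of $\mu$ along the ancient flow $g(t)$, $t\in(-\infty,0)$, gives $\mu(g,\tau')=\mu(g(-1),\tau')\geq\mu(g(t),\tau'-1-t)=\mu(g,(\tau'-1-t)/(-t))$ for every $t<-1$; letting $t\to-\infty$ and using the continuity of $\tau\mapsto\mu(g,\tau)$ gives $\mu(g,\tau')\geq\mu(g,1)$. Hence $\nu(g)=\mu(g,1)=\log Vol_{f}(M)-\tfrac n2\log(4\pi)=:-A(n,Vol_{f}(M))$, and the theorem follows with $\kappa=\kappa(n,Vol_{f}(M))$.

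I expect two points to require care. First, the integration by parts and the finiteness of the integrals used above rest on the quadratic growth $f(x)\sim\tfrac14 d(x,p)^{2}$ and Euclidean volume growth valid for shrinking solitons, which follow from bounded curvature (so that, as in Lemma~1.1, $X=\nabla f$ is complete) together with maximum-principle arguments in the spirit of Cao--Zhou; this needs to be set up carefully. Second --- and this is the real crux --- one needs that the soliton potential \emph{globally} minimizes $\mathcal{W}(g,\cdot,1)$, since $\hat f$ being merely a critical point gives only the (useless) upper bound $\mu(g,1)\leq c$. A clean way to bypass this is to run Perelman's noncollapsing through the reduced volume instead: for a gradient shrinking soliton the reduced distance based at the singular time agrees with $f$ up to an additive constant, so the reduced volume is constant along the associated flow and equal to an explicit positive multiple of $Vol_{f}(M)$, and the noncollapsing constant can then be read off directly, avoiding the minimality question entirely.
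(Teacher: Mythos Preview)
Your entropy route is genuinely different from the paper's, and the gap you isolate is the real one. Showing that $\hat f$ is the \emph{global} minimizer of $\mathcal{W}(g,\cdot,1)$ on a noncompact shrinker is exactly a sharp logarithmic Sobolev inequality for the measure $e^{-f}dv_g$; this was later proved by Carrillo--Ni, but it is not something you can extract from the soliton identities alone, and without it you have only the useless bound $\mu(g,1)\le c$. Your promotion of $\mu(g,1)$ to $\nu(g)$ via monotonicity along the ancient flow plus the scaling identity $\mu(g(t),\sigma)=\mu(g,\sigma/(-t))$ is correct in spirit, though on a noncompact manifold the definition of $\mu$, its continuity in $\tau$, and the monotonicity formula all require nontrivial justification (admissible class of $w$, decay of approximate minimizers, etc.).

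The reduced-volume bypass you sketch at the end is precisely the paper's method, but one step of your sketch is too optimistic. You assert that ``the reduced distance based at the singular time agrees with $f$ up to an additive constant''. The paper constructs its singular reduced length $\bar l^\tau$ as a limit of honest reduced lengths $l^\tau_{(x,T_i)}$ with $T_i\to 0$, and explicitly remarks (after Theorem~\ref{asym_sol}) that this $\bar l^\tau$ need not be a soliton function. The argument that actually closes is: under rescalings $\tau_i\to\infty$ the pointed flows $(M,\tau_i^{-1}g(\tau_i t),x)$ converge back to $(M,g(t),p)$ (because $\phi_{-\tau}(x)$ stays bounded, which is where properness of $f$ enters), and the limiting $\bar l$ produced by Theorem~\ref{asym_sol} is \emph{some} normalized soliton function on $(M,g)$; then Lemma~\ref{f_vol}, which says the $f$-volume of a normalized soliton depends only on the isometry class and not on the choice of potential, forces $\int_M e^{-\bar l}\,dv_g=Vol_f(M)$. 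This is the uniform lower bound on the reduced volume, and Theorem~\ref{vol_noncol} finishes. So the identification $\bar l=f$ is replaced by Lemma~\ref{f_vol}; if you want to assert $\bar l=f$ directly, that is a separate claim requiring its own proof.
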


\begin{remark}
The result should be compared to a similar result for Einstein manifolds with positive Einstein constant.  There an argument using Bishop-Gromov tells us that the manifold is noncollapsed with a constant depending only on the volume and the Einstein constant.  Similarly for a shrinking soliton we will see the soliton is noncollapsed for a constant depending only on the $f$-volume and soliton constant.
\end{remark}

To state our final theorem we need the following definition

\begin{definition}
Let $(M,g(t))$ be a complete Ricci flow on a maximal time interval $[0,T)$.  We say $(M,g(t))$ encounters a Type I singularity if $\exists C>0$ such that $|Rm[g(t)]|\leq\frac{C}{|T-t|}$.
\end{definition}

\begin{thm}\label{mthm5}
Let $(M,g(t))$, $t\in [0,T)$ be a complete Ricci flow which encounters a Type I singularity at $T$.  Let $t_{i}\rightarrow T$ and $x\in M$.  With $\tau_{i}=T-t_{i}$ and $g_{i}(t)=\tau_{i}^{-1}g(\tau_{i}(T-t))$ then $(M,g_{i}(t),(x,-1))\rightarrow (N,h(t),(p,-1))$, a normalized $\kappa$-noncollapsed shrinking soliton with bounded curvature.
\end{thm}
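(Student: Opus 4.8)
The plan is to run the standard blow-up argument: extract a smooth pointed limit of the parabolic dilations by Hamilton's compactness theorem, and then identify it as a gradient shrinking soliton using the rigidity case of Perelman's reduced volume monotonicity. Two ingredients feed the compactness. First, curvature bounds: the Type I hypothesis rescales well, since $|Rm[g(t)]|\le C/(T-t)$ gives, for the dilations $g_{i}(t)=\tau_{i}^{-1}g(T+\tau_{i}t)$, the bound $|Rm[g_{i}(t)]|_{g_{i}}\le C/|t|$ for every $t<0$ in the (expanding) domain of $g_{i}$, hence uniform curvature bounds on each compact subinterval $[-A,-\epsilon]\subset(-\infty,0)$. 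Second, noncollapsing: parabolic rescaling preserves Perelman's no-local-collapsing property, so it suffices that $(M,g(t))$ is $\kappa$-noncollapsed on $[0,T)$ below a fixed scale; this is Perelman's no-local-collapsing theorem, using that $g(t)$ has bounded curvature on each $[0,T-\epsilon]$ (again by Type I; in the noncompact case one invokes the localized version). The scales $\sim\sqrt{\tau_{i}}$ relevant to the dilations are eventually below the noncollapsing scale, so the $g_{i}$ are uniformly $\kappa$-noncollapsed, in particular have a uniform injectivity radius bound at $(x,-1)$.

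With these bounds, Hamilton's (Cheeger--Gromov--Hamilton) compactness theorem yields a subsequence along which $(M,g_{i}(t),(x,-1))$ converges in the pointed smooth Cheeger--Gromov sense to a complete pointed Ricci flow $(N,h(t),(p,-1))$ defined for $t\in(-\infty,0)$, with $|Rm[h(t)]|\le C/|t|$. In particular the time slice $h(-1)$ has $|Rm|\le C$, so the limit soliton (once identified) has bounded curvature, and $h(t)$ is $\kappa$-noncollapsed (either by passing the property through the convergence, or, once the soliton structure is known, directly from Theorem \ref{mthm4}).

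It remains to see $h(t)$ is a gradient shrinking soliton. For this I would work with Perelman's reduced volume $\tilde{V}(\tau)$ based at $(x,T)$, defined as the limit as $t\uparrow T$ of the reduced volumes based at $(x,t)$ — legitimate here because the Type I bound controls the geometry approaching $T$. By Perelman's monotonicity $\tilde{V}$ is nonincreasing in $\tau$ and bounded above by the Gaussian value $(4\pi)^{n/2}$, so $\Theta:=\lim_{\tau\to 0^{+}}\tilde{V}(\tau)$ exists and is positive. Scale-invariance of the reduced volume gives $\tilde{V}^{g_{i}}(\tau)=\tilde{V}^{g}(\tau_{i}\tau)$, where $\tilde V^{g_i}$ is based at the singular time of $g_i$ and space point $x$; since $\tau_{i}\to 0$, this tends to $\Theta$ for every fixed $\tau>0$. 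Combining with the convergence of the reduced volume under the Cheeger--Gromov convergence, the reduced volume of $h(t)$ based at the limit point at time $0$ is the constant $\Theta$ on all of $(0,\infty)$. The rigidity (equality) case of Perelman's monotonicity formula then forces $h(t)$ to be a gradient shrinking soliton: one obtains $Rc[h(t)]+\nabla^{2}f=\tfrac{-1}{2t}\,h(t)$ with $f$ built from the reduced distance, and evaluating at $t=-1$ and absorbing a constant into $f$ produces a normalized shrinking soliton with $\lambda=\tfrac12$ and bounded curvature, as claimed. (If $\Theta=(4\pi)^{n/2}$ the limit is flat $\mathds{R}^{n}$, still a normalized shrinking soliton.)

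I expect the main obstacle to lie entirely in that last paragraph. Two points need care. The first is the passage of Perelman's reduced volume through pointed Cheeger--Gromov convergence: one must show that $\mathcal{L}$-minimizing geodesics for the $g_{i}$ issuing from the basepoint stay in a compact region over bounded time intervals, that the reduced distances $\ell_{i}$ are locally uniformly bounded and converge to the reduced distance of $h$, and that no reduced-volume mass escapes to spatial infinity — all of which rest on the curvature and noncollapsing bounds established above. The second is invoking the rigidity in the monotonicity formula: differentiating $\tilde{V}$, one recognizes the integrand $v=(4\pi\tau)^{-n/2}\big(\tau(2\Delta\ell-|\nabla\ell|^{2}+R)+\ell-n\big)e^{-\ell}\le 0$, and uses $\tilde{V}'\equiv 0$ to force $v\equiv 0$, from which the differential Harnack equality and then the soliton equations follow. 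Both steps are present in Perelman's work and its expositions, but carrying them out in the complete noncompact, bounded-curvature setting is where the real work is; everything up to that point is routine compactness.
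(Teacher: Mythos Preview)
Your proposal is correct and is essentially the same argument as the paper's: the paper constructs a ``singular reduced length'' $\bar{l}^{\tau}$ based at $(x,0)$ as a limit of reduced lengths from $(x,T_i)$ with $T_i\to 0$, proves the growth and derivative estimates of Proposition~C (which are exactly what is needed to pass reduced volumes through Cheeger--Gromov limits without mass loss), and then invokes the rigidity case of the reduced-volume monotonicity via Theorem~\ref{asym_sol} to get the normalized soliton structure on the blow-up limit; your sketch recovers each of these ingredients and correctly flags the two places where the real work lies. The only minor packaging difference is that the paper derives $\kappa$-noncollapsing at $(x,t)$ from the reduced-volume lower bound (Proposition~C plus Theorem~\ref{vol_noncol}) rather than by citing Perelman's no-local-collapsing theorem directly, but this is the same mechanism.
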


\begin{remark}
We could let $x\in M$ from above vary with $i$ so long as $x_{i}$ doesn't tend to infinity in an appropriate sense.  A similar result was obtained in \cite{S} under the assumption that the blow up limit is compact.
\end{remark}

The proof is organized as follows.  In Section 2 we introduce a class of ancient Ricci flows with certain useful curvature properties.  This class of Ricci flows includes, among others, the associated Ricci flows of shrinking solitons and smooth limits of sequences of shrinking solitons.  We will begin by studying reduced length functions, as introduced by Perelman \cite{P}, as well as a slight generalization which behaves as reduced a length function from a singular point on these spaces.  The tools proved will be used to prove Theorems \ref{mthm3},\ref{mthm4}, \ref{mthm5} in Sections 2 and 3.  The main technical tool is Theorem \ref{asym_sol}, which will prove the existence of asymptotic solitons at both the singular time and at negative infinity for this class of Ricci flows.  In Section 4 we will use these tools to study the behavior of general noncompact shrinking solitons at infinity. Additionally Section 4 will prove a splitting lemma for arbitrary shrinking solitons with bounded curvature.  The result is similar to one proved in \cite{P}, but by not relying on the Toponogov theorem does not require a nonnegative sectional curvature assumption.

In Section 5 we take a detour and study the level sets of the soliton functions themselves.  After proving some basic properties about them we will prove an estimate on the mean curvature of such level sets which requires only a nonnegative Ricci assumption.  This is similar to estimates in \cite{P},\cite{MT} but these estimates required nonnegative sectional curvature and were only applicable in dimension three.  Using this and the tools of Sections 3 and 4 we will give a proof of the classification of shrinking solitons in dimension three which requires only a nonnegative Ricci assumption.

Section 6 will use the previous sections to give a full classification of the behavior of shrinking solitons at infinity.  Then Section 7 is dedicated to proving a technical lemma which will be useful in proving the main theorem.  We will show a shrinking soliton that satisfies $0\leq Rc$ and $\nabla^{2}f>0$ must be isometric $\mathds{R}^{n}$.  Sections 8 and 9 are then dedicated to finishing the proof of the main theorem.

\section{Controlled Ricci Flows}

We begin by pointing out that the main use of normalizing the soliton function is in the control of the $f$-volume of the manifold:

\begin{lem}\label{f_vol}
Let $(M,g,f)$, $(M',g',f')$ two normalized solitons with finite $f$-volumes.  Assume $(M,g)$ and $(M',g')$ isometric, then $Vol_{f}(M)=Vol_{f'}(M')$
\end{lem}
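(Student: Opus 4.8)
The plan is to show that the normalization pins down the soliton function uniquely enough that an isometry must intertwine the two soliton functions, so the $f$-volumes, being isometry invariants built from $f$, agree. First I would recall that if $(M,g,f)$ is normalized then $\lambda\in\{-\tfrac12,0,\tfrac12\}$ is determined by the metric (it is $\tfrac1n$ times the trace of $Rc+\nabla^2 f=\lambda g$, but more robustly it is fixed by the rescaling convention and, in the shrinking/expanding case, by the sign and scale of $Rc$ together with the equation $R+\triangle f=n\lambda$), so an isometry $\Phi:(M,g)\to(M',g')$ forces $\lambda=\lambda'$. Then $\tilde f:=f'\circ\Phi$ is a smooth function on $M$ with $Rc[g]+\nabla^2\tilde f=\lambda g$ and $\triangle\tilde f-|\nabla\tilde f|^2+2\lambda\tilde f=n\lambda$, i.e. $\tilde f$ is another normalized soliton function for $(M,g)$. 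It therefore suffices to prove: a normalized shrinking (resp.\ expanding) soliton has a \emph{unique} normalized soliton function; in the steady case, $\lambda=0$ and the normalization $\triangle f-|\nabla f|^2=0$ does not pin down $f$ (one may add a constant, and more), so one argues instead that $Vol_f(M)$ is independent of the choice.

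For the shrinking case ($\lambda=\tfrac12$): as noted in the excerpt, any two soliton functions $f_1,f_2$ differ by a function $h$ with $\nabla^2 h=0$, i.e.\ $h$ affine along geodesics; and the constraint equations give $R+|\nabla f_i|^2 + 2\lambda f_i = c_i$ and $R+\triangle f_i = n\lambda$, so $\triangle h=0$ and $|\nabla f_1|^2-|\nabla f_2|^2 = 2\lambda(f_2-f_1)+\text{const}= -h+\text{const}$. Using $f_2=f_1+h$ and expanding $|\nabla f_2|^2=|\nabla f_1|^2+2\langle\nabla f_1,\nabla h\rangle+|\nabla h|^2$, one gets $2\langle\nabla f_1,\nabla h\rangle + |\nabla h|^2 = -h + \text{const}$. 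Since $\nabla^2 h=0$, taking another derivative (or differentiating along $\nabla h$ and along $\nabla f_1$, using $\nabla^2 f_1 = \lambda g - Rc$) should force $\nabla h$ to be a parallel vector field with $|\nabla h|^2$ constant, and then $-h=2\langle\nabla f_1,\nabla h\rangle + \text{const}$, i.e.\ $h$ is constant along the flow of $\nabla f_1$ yet affine with $\nabla^2 h=0$; combined with the normalization $\triangle f_i - |\nabla f_i|^2 + 2\lambda f_i = n\lambda$ (the \emph{same} right-hand side for both, not just ``$=\text{const}$''), subtracting yields $-|\nabla h|^2 - 2\langle\nabla f_1,\nabla h\rangle + h = 0$, which together with the previous displayed identity gives $h\equiv 0$. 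So $f$ is unique, $\tilde f=f$, and $Vol_f(M)=\int_M e^{-f}dv_g = \int_{M'} e^{-f'\circ\Phi^{-1}}dv_{g'}=Vol_{f'}(M')$.

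The main obstacle is the \emph{steady} case, where I expect uniqueness of the normalized soliton function to genuinely fail, so one cannot simply transport $f$ along $\Phi$ and must show the integral $\int_M e^{-f}dv_g$ is the same for all normalized $f$ — equivalently that if $\nabla^2 h=0$, $\triangle h=0$, and the normalization is preserved, then $\int_M e^{-(f+h)} = \int_M e^{-f}$; I would handle this via the associated Ricci flow and the monotonicity/constancy of Perelman's $\mathcal{W}$- or $\mathcal F$-type weighted volume under the soliton's self-similar flow, or by a direct change-of-variables argument using the flow $\phi_t$ generated by $\nabla f$ (under which $e^{-f}dv_g$ is, up to the explicit scaling factor, preserved), reducing the ambiguity to a one-parameter family whose weighted volume is constant. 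Once all three cases ($\lambda=\tfrac12,0,-\tfrac12$) are covered, the lemma follows. A cleaner unified alternative, which I would try first, is to avoid case analysis entirely: observe that $Vol_f(M)=\int_M e^{-f}dv_g$ equals (a dimensional constant times) the limit as $\tau\to 0^+$ of Perelman's reduced volume $\tilde V(\tau)$ for the associated Ricci flow based at the soliton's ``center,'' an object manifestly determined by the Riemannian flow $(M,g(t))$ hence by $(M,g)$ alone; the excerpt flags reduced length/volume as the central tool of Section 2, so this is likely the intended route.
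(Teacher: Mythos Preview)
Your central claim---that in the shrinking case the normalized soliton function is \emph{unique}---is false, and the argument you give for it is circular. The two equations you subtract are not independent: the normalization $\triangle f_i-|\nabla f_i|^2+2\lambda f_i=n\lambda$ is exactly the combination of $R+\triangle f_i=n\lambda$ and $R+|\nabla f_i|^2-2\lambda f_i=0$, so subtracting the normalization equations yields the \emph{same} relation $2\langle\nabla f_1,\nabla h\rangle+|\nabla h|^2=2\lambda h$ as subtracting the constraint equations (once $\triangle h=0$ is used). You are using one equation twice and concluding $h\equiv 0$; it does not follow. A concrete counterexample: on the Gaussian shrinker $(\mathds{R}^n,g_0)$ with $\lambda=\tfrac12$, both $f(x)=\tfrac14|x|^2$ and $f'(x)=\tfrac14|x-a|^2$ are normalized soliton functions for any $a\in\mathds{R}^n$, and $h=f'-f$ is a nonzero affine function satisfying the relation above.

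The paper's proof embraces this nonuniqueness rather than fighting it. Since $\nabla^2(f'-f)=0$, the difference $L$ is affine; if $L$ is nonconstant then $\nabla L$ is a nonzero parallel vector field, so $(M,g)$ splits as $\mathds{R}\times N$ and one can write $f=h_N+\tfrac{\lambda}{2}t^2$ after a coordinate shift. The normalization then pins down $L$ as $L=at-\tfrac{1}{2\lambda}a^2$, and a direct Gaussian computation in the $\mathds{R}$-factor shows $\int_M e^{-(f+L)}=\int_M e^{-f}$. That explicit integral is the missing idea; your uniqueness route cannot be salvaged. (Your closing suggestion via reduced volume is in the right spirit but is not what the paper does here---it uses the lemma later to \emph{identify} the limit of reduced volumes with $Vol_f$, not the other way around.)
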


\begin{remark}
Note that in general if $f$ and $f'$ are not normalized this is not true, just take $f'=f+c$ for $c$ a nonzero constant.
\end{remark}

\begin{proof}
Since $(M,g)$ and $(M',g')$ are isometric we may view $f$ and $f'$ as soliton functions on $(M,g)$.  We see then that $\nabla^{2}(f-f')=0$ and so $f'=f+L$ where $L$ is a linear function.  If $L$=constant then $f$ and $f'$ are both normalized iff $L=0$.  So we may assume $L$ is not a constant.  Then we see that $(M,g)$ splits $\mathds{R}\times N$ such that $L(t,n)=at+b$ for $a,b$ constants.  We then also see that $f$ must restrict to a soliton function $h$ on $N$ such that $f=h+(\frac{\lambda}{2}t^{2}+a't+b')$ since the restriction of $f$ to each $\mathds{R}$ factor is quadratic.  After a change of coordinates and absorbing $b'$ into $h$ we can assume $f=h+\frac{\lambda}{2}t^{2}$.  By tracing the soliton equation $Rc+\nabla^{2}f=\lambda g$ with $\nabla f$ or with $g$ we get the equations

\[
R+|\nabla f|^{2}-2\lambda f= c = constant
\]
\[
R+\triangle f =n\lambda
\]
and by substituting we see that if $f$ is normalized then $c=0$ in the above.  Since this equation holds for both $f$ and $f'$ we see that $|\nabla L|^{2}+2<\nabla f,\nabla L> -2\lambda L = 0$ or that $L=at-\frac{1}{2\lambda}a^{2}$.  Thus we get that

\[
 \int_{M}e^{-f'}dv_{g}=\int_{\mathds{R}}e^{-(\frac{\lambda}{2}t^{2}+at-\frac{1}{2\lambda}a^{2})}dt\int_{N}e^{-h}dv_{h}
 \]
\[
 = \int_{\mathds{R}}e^{-\frac{\lambda}{2}(t-\frac{a}{\lambda})^{2}}dt\int_{N}e^{-h}dv_{h}  = \int_{\mathds{R}}e^{-\frac{\lambda}{2}(t)^{2}}dt\int_{N}e^{-h}dv_{h} = \int_{M}e^{-f}dv_{g}
\]
\end{proof}

Now the ability to alternate between viewing a soliton as a structure on a fixed Riemannian Manifold and viewing it as a Ricci flow with special properties is very convenient, especially when trying to understand limiting behavior.  With that in mind it will be useful for us to have analyzed a particular class of Ricci flows.

\begin{definition}
Let $(M,g(t))$, $t\in(-\infty,0)$, be a Ricci flow of complete Riemmanian
manifolds.  We say $(M,g(t))$ is $(C,\kappa)$-controlled if it is $\kappa$-noncollapsed and such that $|Rm[g(t)]|\leq \frac{C}{|t|}$.
\end{definition}

\begin{remark}\label{no_kappa}
It is worth noting that although the $\kappa$-noncollapsed assumption is stated for simplicity throughout, none of the estimates of this section require it.  It is only used in the final theorems to take limits.  Additionally it will be clear from the proofs that though the estimates are proved under the assumption that a global curvature bound exists on $(-\infty,0)$, if the curvature bounds exists only on intervals around $0$ or $-\infty$ then corresponding estimates exist on the respective intervals.
\end{remark}

\begin{lem}
Let $(M,g(t))$ be a $(C,\kappa)$-controlled Ricci flow.  Then there exists a sequence
$\{C^{k,l}\}$ such that $|(\frac{\partial}{\partial t})^{k}\nabla^{l}Rm|\leq \frac{C^{k,l}}{|t|^{1+k+l/2}}$.
\end{lem}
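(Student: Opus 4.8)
The plan is to deduce this from the standard interior derivative estimates for Ricci flow — Shi's estimates for the spatial derivatives, together with their space--time versions (e.g.\ those of Bando, or equivalently repeated use of $\partial_t Rm=\triangle Rm+Rm*Rm$) — by exploiting the parabolic scaling invariance of the Ricci flow, which is precisely matched by the scaling invariance of the hypothesis $|Rm[g(t)]|\le C/|t|$.

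\smallskip\noindent\emph{Rescaling.} Fix an arbitrary $t_0\in(-\infty,0)$ and parabolically rescale centered at the singular time: set $\tilde g(s):=|t_0|^{-1}\,g(|t_0|\,s)$ for $s\in(-\infty,0)$. A direct computation gives $\partial_s\tilde g(s)=(\partial_t g)(|t_0|s)=-2\,Rc[g(|t_0|s)]=-2\,Rc[\tilde g(s)]$, so $\tilde g(s)$ is again a Ricci flow of complete manifolds; and since $|Rm|$ scales like the reciprocal of the metric, $|Rm[\tilde g(s)]|_{\tilde g(s)}=|t_0|\,|Rm[g(|t_0|s)]|_{g(|t_0|s)}\le C/|s|$. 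In particular $|Rm[\tilde g(s)]|\le C$ on $M\times[-2,-1]$, and the point $s=-1$ of the rescaled flow corresponds to $t=t_0$.

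\smallskip\noindent\emph{Applying Shi/Bando.} On the time interval $[-2,-1]$ the rescaled flow has curvature bounded by the fixed constant $C$, and the metrics are complete, so Shi's estimate (and its space--time analogue) applies globally, giving, at $s=-1$, a bound $|\partial_s^{\,k}\nabla^{l}Rm[\tilde g(-1)]|_{\tilde g(-1)}\le \bar C^{k,l}$, where $\bar C^{k,l}=\bar C^{k,l}(n,k,l,C)$ depends only on $n$, $k$, $l$ and $C$ — crucially \emph{not} on $t_0$. (One may equally run the estimate on $[-A,-1]$ and let $A\to\infty$; the point is only that a curvature bound of fixed size is available on a time interval of fixed length before $s=-1$, which is where the hypothesis being valid on a long interval — down to $-\infty$ — is used.) As noted in Remark \ref{no_kappa}, $\kappa$-noncollapsing plays no role here.

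\smallskip\noindent\emph{Unscaling.} It remains to undo the rescaling. The quantity $\partial_t^{\,k}\nabla^{l}Rm$ has scaling weight (inverse length) $^{\,2+2k+l}$: indeed $Rm[\tilde g]=|t_0|^{-1}Rm[g]$ as tensors, the Levi-Civita connection is unchanged under constant rescaling, each $\partial_s=|t_0|\,\partial_t$, and a $(0,4+l)$-tensor has $\tilde g$-norm equal to $|t_0|^{(4+l)/2}$ times its $g$-norm. Combining, $|\partial_t^{\,k}\nabla^{l}Rm[g(t_0)]|_{g(t_0)}=|t_0|^{-(1+k+l/2)}\,|\partial_s^{\,k}\nabla^{l}Rm[\tilde g(-1)]|_{\tilde g(-1)}\le \bar C^{k,l}/|t_0|^{1+k+l/2}$. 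Since $t_0<0$ was arbitrary, the estimate holds at every $t<0$ with $C^{k,l}:=\bar C^{k,l}$, which is the claim.

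\smallskip I expect the only real subtlety to be bookkeeping: pinning down the scaling weight of $\partial_t^{\,k}\nabla^{l}Rm$ correctly and confirming that the Shi/Bando constants, after the rescaling, depend only on $n,k,l,C$. The analytic content is entirely classical, so there is no genuinely hard step.
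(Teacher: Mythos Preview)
Your argument is correct and is essentially the paper's proof: rescale so that the chosen time becomes $-1$, use the hypothesis to get a uniform curvature bound on a unit-length backward time interval, apply Shi's (space--time) derivative estimates, and unscale. The paper phrases the Shi step on a local parabolic ball $P(x,-1,1)$ rather than globally, but this is a cosmetic difference.
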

\begin{proof}
Let $(x,t)\in M\times(-\infty,0)$.  After rescaling we can assume $t=-1$.  Then on $P(x,t,1)\equiv B_{g(t)}(x,t,1)\times(t-1,t)$ we have $|Rm|\leq \frac{C}{|t|}$.  Hence standard Shi estimates as in \cite{MT} give us uniform estimates on $P(x,t,1/2)$, hence at $(x,t)$.
\end{proof}

We recall the following definitions from \cite{P} and \cite{MT}.

\begin{definition}
We call a continuous curve $\tilde{\gamma}(\tau):[0,\bar{\tau}]\rightarrow M\times(-\infty,0)$ admissible if $\exists$ $T<0$ such that $\tilde{\gamma}(\tau)=(\gamma(\tau),T-\tau)$ where $\gamma(\tau)$ is a smooth regular
curve on $(0,\bar{\tau})$.  We write $\frac{d}{d\tau}\tilde{\gamma}(\tau)=(X(\tau),-1)$,
where $X$ is the horizontal component.
\end{definition}

\begin{definition}
For any admissible curve $\tilde{\gamma}(\tau)$ we define its $\mathcal{L}$-length by $\mathcal{L}[\tilde{\gamma}(\tau)]=\int_{0}^{\bar{\tau}}\sqrt{\tau}(R(\tilde{\gamma}(\tau)) +|X|^{2})d\tau$.  Fix $(x,T)\in M\times(-\infty,0)$.  For $\forall$ $(y,T-\bar{\tau})\in M\times(-\infty,0)$, $\bar{\tau}>0$, we define the $\mathcal{L}$-distance from $(x,T)$
to $(y,T-\bar{\tau})$ as $L_{x}^{\bar{\tau}}(y) \equiv inf_{\tilde{\gamma}(\tau)}\mathcal{L}[\tilde{\gamma}(\tau)]$, where the inf is over all
admissible curves connecting $(x,T)$ to $(y,T-\bar{\tau})$.
\end{definition}

The following computation can be found in \cite{MT}

\begin{lem}
The Euler-Lagrange equation for $\mathcal{L}$ is $\nabla_{X}X-\frac{1}{2}\nabla R +\frac{1}{2\tau}X+Rc(X)=0$.
\end{lem}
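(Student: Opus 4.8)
The plan is to derive the identity as the Euler--Lagrange equation of the functional $\mathcal{L}$ by a direct first--variation computation; this is the standard $\mathcal{L}$-geodesic equation, and as the text notes the computation is in \cite{MT}, so I expect no real obstacle beyond careful bookkeeping. I would fix an admissible curve $\tilde{\gamma}(\tau)=(\gamma(\tau),T-\tau)$, $\tau\in[0,\bar{\tau}]$, take a one--parameter family of admissible curves $\tilde{\gamma}_{s}(\tau)=(\gamma_{s}(\tau),T-\tau)$ with $\gamma_{0}=\gamma$ and spatial variation field $Y(\tau)=\partial_{s}\gamma_{s}(\tau)|_{s=0}$, and write $X=\partial_{\tau}\gamma$. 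The one feature distinguishing this from the ordinary geodesic computation is that the spatial metric along the curve is $g(T-\tau)$, which moves in $\tau$: by the Ricci flow equation together with the chain rule (since $t=T-\tau$, the $\tau$-derivative hits the time slot with a minus sign) one has $\partial_{\tau}\,g(T-\tau)=2Rc$, and this is precisely what will generate the Ricci term in the final equation.

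Carrying out the variation, I would proceed term by term. First, $\partial_{s}\big(R(\gamma_{s}(\tau),T-\tau)\big)=\langle\nabla R,Y\rangle$, since the time slice is held fixed and only the spatial dependence of $R$ contributes. Second, $\partial_{s}|X|^{2}=\partial_{s}\,g(T-\tau)(\partial_{\tau}\gamma_{s},\partial_{\tau}\gamma_{s})=2\langle\nabla_{X}Y,X\rangle$, using that for each fixed $\tau$ the metric $g(T-\tau)$ is a fixed Riemannian metric, so its Levi--Civita connection is torsion free and mixed covariant derivatives commute. Third, differentiating the pairing $\langle Y,X\rangle$ in $\tau$ and using metric compatibility of the $g(T-\tau)$-connection together with $\partial_{\tau}g=2Rc$ gives $\tfrac{d}{d\tau}\langle Y,X\rangle=\langle\nabla_{X}Y,X\rangle+\langle Y,\nabla_{X}X\rangle+2Rc(Y,X)$, so that $\langle\nabla_{X}Y,X\rangle$ can be replaced by $\tfrac{d}{d\tau}\langle Y,X\rangle-\langle Y,\nabla_{X}X\rangle-2Rc(Y,X)$. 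Substituting into $\delta\mathcal{L}(Y)=\int_{0}^{\bar{\tau}}\sqrt{\tau}\big(\partial_{s}R+\partial_{s}|X|^{2}\big)\,d\tau$, I would then integrate the total $\tau$-derivative term by parts; here $\tfrac{d}{d\tau}(2\sqrt{\tau})=\tfrac{1}{\sqrt{\tau}}$, which is exactly the factor that produces the $\tfrac{1}{2\tau}X$ term. What remains is $\delta\mathcal{L}(Y)=\int_{0}^{\bar{\tau}}\sqrt{\tau}\,\langle E(\tau),Y\rangle\,d\tau$, where $E$ is, up to a fixed nonzero multiple, the left--hand side of the asserted identity; since $Y$ is an arbitrary variation field supported in the interior, $E\equiv 0$, which is the claimed Euler--Lagrange equation.

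The only points needing care --- and I would not call them obstacles, since the lemma is quoted from \cite{MT} --- are the bookkeeping of the $\tau$-dependent metric and the mild degeneracy at $\tau=0$. For the former, the crucial point is the sign of $\partial_{\tau}g$: it is $+2Rc$, not $-2Rc$, because of the reparametrization $t=T-\tau$, and an error here flips the sign of the Ricci term. The latter is harmless: admissible curves are only required to be smooth on the open interval $(0,\bar{\tau})$ and $\sqrt{\tau}$ degenerates at $0$, but to extract the Euler--Lagrange equation it suffices to use variation fields $Y$ supported in a compact subinterval of $(0,\bar{\tau})$, so that every boundary term in the integration by parts vanishes outright and the conclusion $E\equiv 0$ on all of $(0,\bar{\tau})$ follows from the fundamental lemma of the calculus of variations; this is also the form in which the equation will be used, namely as the ODE satisfied by $\mathcal{L}$-geodesics on the open time interval.
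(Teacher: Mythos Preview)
Your approach is correct and is precisely the standard first--variation computation; the paper itself does not give a proof of this lemma but simply cites \cite{MT}, where exactly the derivation you outline is carried out. In that sense your proposal supplies the argument the paper omits, and there is nothing to compare beyond noting that what you sketch is the same computation as in the cited reference.
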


It is understood the Euler-Lagrange equation is a horizontal equation for $\gamma$.

\begin{definition}
We call an admissible curve $\tilde{\gamma}$ which satisfies the Euler-Lagrange equation
an $\mathcal{L}$-geodesic.
\end{definition}

\begin{lem}
Let $(M,g(t))$ be a $(C,\kappa)$-controlled Ricci flow.  Fix $(x,T)\in M\times(-\infty,0)$,
$\bar{\tau}>0$.  Then $\forall$ $y\in M$ there exists a minimizing $\mathcal{L}$-geodesic from
$(x,T)$ to $(y,T-\bar{\tau})$.
\end{lem}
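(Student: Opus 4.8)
The plan is to run the direct method of the calculus of variations, just as for Riemannian geodesics, but first removing the $\sqrt{\tau}$ degeneracy of $\mathcal L$ at $\tau=0$ by a change of variable. The key preliminary observation is that the relevant time interval $[T-\bar\tau,T]$ is a compact subset of $(-\infty,0)$: since $T<0$ we have $|t|\geq |T|>0$ for $t\in[T-\bar\tau,T]$, so the controlled hypothesis gives a uniform bound $|Rm[g(t)]|\leq C/|T|=:K$ on $M\times[T-\bar\tau,T]$. In particular $|R|\leq c_n K$ on this slab, and $|\partial_t g|=2|Rc|\leq 2(n-1)K$, so each $g(t)$ is complete (by definition of $(C,\kappa)$-controlled) and the metrics $g(t)$, $t\in[T-\bar\tau,T]$, are all uniformly bi-Lipschitz to $g(T)$ with a constant $\Lambda=\Lambda(n,K,\bar\tau)$.

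Next I substitute $\tau=s^{2}$. For an admissible curve $\tilde\gamma(\tau)=(\gamma(\tau),T-\tau)$, setting $\sigma(s)=\gamma(s^{2})$ one has $\sigma'(s)=2sX(s^2)$, so
\[
\mathcal L[\tilde\gamma]=\int_{0}^{\sqrt{\bar\tau}}\Bigl(2s^{2}R\bigl(\sigma(s),T-s^{2}\bigr)+\tfrac12\,|\sigma'(s)|^{2}_{g(T-s^{2})}\Bigr)\,ds .
\]
In the $s$ variable the coefficients are smooth and bounded on the slab and the quadratic term $\tfrac12|\sigma'|^{2}$ is nondegenerate, so this is a genuine (weighted) energy functional. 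Because $R\geq -c_nK$, the functional is bounded below on the class of admissible curves from $(x,T)$ to $(y,T-\bar\tau)$, and this class is nonempty (take any fixed smooth curve from $x$ to $y$, reparametrized on $[0,\bar\tau]$, which has finite $\mathcal L$-length since $R$ is bounded); hence $\ell:=L_{x}^{\bar\tau}(y)\in\mathds R$.

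Now take a minimizing sequence $\sigma_{k}$. From $R\geq -c_nK$ and $\mathcal L[\tilde\gamma_{k}]\to\ell$ we get a uniform energy bound $E:=\sup_{k}\int_{0}^{\sqrt{\bar\tau}}\tfrac12|\sigma_{k}'|^{2}\,ds<\infty$. By Cauchy--Schwarz this bounds the $g(T-s^{2})$-length, hence (via $\Lambda$) the $g(T)$-length, of each $\sigma_{k}$; since $g(T)$ is complete, all the $\sigma_{k}$ lie in a fixed compact set $\bar B\subset M$. The same estimate shows the $\sigma_{k}$ are uniformly $\tfrac12$-Hölder in the $g(T)$-metric, so by Arzelà--Ascoli a subsequence converges in $C^{0}$ (and weakly in $H^{1}$) to a limit $\sigma_{\infty}:[0,\sqrt{\bar\tau}]\to\bar B$ with $\sigma_{\infty}(0)=x$, $\sigma_{\infty}(\sqrt{\bar\tau})=y$. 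Lower semicontinuity of the Dirichlet energy under weak $H^{1}$ convergence gives $\int\tfrac12|\sigma_{\infty}'|^{2}\leq\liminf\int\tfrac12|\sigma_{k}'|^{2}$, while the potential term passes to the limit by uniform convergence and dominated convergence; hence $\mathcal L[\tilde\gamma_{\infty}]\leq\ell$, so $\sigma_{\infty}$ is a minimizer (among $H^{1}$ competitors, and among admissible ones by density). Finally, a minimizer satisfies the Euler--Lagrange equation computed above, which in the $s$ variable is a second-order ODE with smooth coefficients, so bootstrapping makes $\sigma_{\infty}$ smooth on $(0,\sqrt{\bar\tau})$; thus $\tilde\gamma_{\infty}(\tau)=(\sigma_{\infty}(\sqrt{\tau}),T-\tau)$ is a smooth minimizing $\mathcal L$-geodesic from $(x,T)$ to $(y,T-\bar\tau)$.

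The main obstacle — and the only place the controlled hypothesis is essentially used beyond smoothness of coefficients — is the non-escape of the minimizing sequence: one must convert an $\mathcal L$-length bound into a genuine length bound in a single fixed complete metric, which requires the uniform two-sided curvature bound on the slab (supplying both the lower bound on $R$ for the energy estimate and the uniform equivalence of the $g(t)$ for the $C^{0}$-compactness). The degeneracy of the weight $\sqrt{\tau}$ at the initial time is a secondary nuisance, cleanly handled by the substitution $s=\sqrt{\tau}$, and the regularity of the minimizer is routine ODE bootstrapping off the earlier Euler--Lagrange lemma.
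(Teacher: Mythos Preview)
Your argument is correct and is essentially the standard direct-method proof of existence of minimizing $\mathcal L$-geodesics under a uniform curvature bound on the relevant time slab; this is exactly what the paper invokes, since its proof consists only of the observation that $|Rm|$ is uniformly bounded on $M\times[T-\bar\tau,T]$ followed by a citation of \cite{MT}. In other words, you have written out in full the argument the paper delegates to Morgan--Tian, with the same key input (the uniform bound coming from $|t|\geq|T|>0$) and the same mechanism (the substitution $s=\sqrt\tau$, energy bounds from the lower bound on $R$, metric comparability from bounded $|Rc|$, and Arzel\`a--Ascoli).
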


\begin{proof}
Note $|Rm|$ bounded uniformly on $M\times [T-\bar{\tau},T]$.  Hence the result follows
as in \cite{MT}.
\end{proof}

From the above it follows (see \cite{MT}) that $L_{x}^{\tau}$ is a locally lipschitz function and for each $\tau>0$  $\exists$ an open dense subset $U_{x}^{\tau}\subseteq M$ such that $L_{x}^{\tau}$ is smooth and $\exists$ a unique minimizing $\mathcal{L}$-geodesic to each
point in $U_{x}^{\tau}$.  It holds that $\forall q\in U_{x}^{\tau}$ that $\nabla L_{x}^{\tau}=2\sqrt{\tau}X$, where $X$ is the horizontal tangent of the unique $\mathcal{L}$-geodesic to $q$.

\begin{definition}
We define the reduced length function $l^{\tau}_{x}=\frac{L_{x}^{\tau}}{2 \sqrt{\tau}}$.  For an admissible curve $\tilde{\gamma}:[0,\bar{\tau}]\rightarrow M\times (-\infty,0)$ we define $\mathcal{K}^{\tau}[\tilde{\gamma}]= \int_{0}^{\bar{\tau}}\tau^{3/2}H(X)d\tau$ where $H(X)=-R_{\tau}-\frac{R}{\tau}-2<\nabla R,X>+2Rc(X,X)$ is the Harnack functional.
\end{definition}

\begin{remark}
The importance of normalizing the $\mathcal{L}$-length is in the scale invariance of $l_{x}^{\tau}$.  Let $c>0$ and note that $c^{-1}g(T+c t)$ is also a Ricci flow on $M$.  Then if $l_{x}^{',\tau}$ is the reduced length function for the rescaled Ricci flow we observe that $l_{x}^{\tau} = l_{x}^{',\tau/c}$.
\end{remark}

We use the following tools (\cite{MT} or \cite{P}):

\begin{propA}
For $\forall$ $q\in U_{x}^{\tau}$ let $\tilde{\gamma}_{q}$ the unique minimizing $\mathcal{L}$-geodesic from $(x,T)$ to $(q,T-\tau)$.  Then at $(q,T-\tau)$

$1)$ $\frac{\partial l^{\tau}_{x}}{\partial\tau} = R(q,\tau)-\frac{l^{\tau}_{x}}{\tau} +\frac{1}{2\tau^{3/2}}\mathcal{K}^{\tau}[\tilde{\gamma}_{q}]$

$2)$ $|\nabla l^{\tau}_{x}|^{2} = \frac{l^{\tau}_{x}}{\tau} - R(q,\tau) - \frac{1}{\tau^{3/2}}\mathcal{K}^{\tau}[\tilde{\gamma}_{q}]$

$3)$ $\triangle l^{\tau}_{x}(q) \leq \frac{n}{2\tau} - R(q,\tau) - \frac{1}{2\tau^{3/2}}\mathcal{K}^{\tau}[\tilde{\gamma}_{q}]$
\end{propA}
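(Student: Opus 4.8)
The three relations are the first- and second-variation identities for Perelman's $\mathcal{L}$-functional, and the plan is to prove them as in \cite{P} and \cite{MT}; the only thing to check is that nothing in that argument needs more than the present hypotheses, and it does not, since for fixed $(x,T)$ and $\tau>0$ every competitor curve lies in the slab $M\times[T-\tau,T]$, on which $|Rm|$ and all of its covariant and time derivatives are bounded by the Shi-type estimate proved above; in particular minimizing $\mathcal{L}$-geodesics exist and the variational computations are legitimate. Fix $q\in U^\tau_x$ and let $\tilde\gamma_q(s)=(\gamma(s),T-s)$, $s\in[0,\tau]$, be the unique minimizing $\mathcal{L}$-geodesic to $(q,T-\tau)$, with horizontal velocity $X(s)=\dot\gamma(s)$. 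By the facts recorded just before the statement, $L^\tau_x$ is smooth near $q$ with $\nabla L^\tau_x(q)=2\sqrt{\tau}\,X(\tau)$; as $\tau$ is held fixed when differentiating in the space variable, $\nabla l^\tau_x(q)=X(\tau)$, so $|\nabla l^\tau_x(q)|^2=|X(\tau)|^2$.

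For $(2)$ I would produce a closed formula for $|X(\tau)|^2$ by integrating a total $s$-derivative along $\gamma$. A direct computation using the $\mathcal{L}$-geodesic equation and $\partial_t g=-2Rc$ shows that along $\gamma$
\[
\frac{d}{ds}\Big(s^{3/2}\big(R+|X|^2\big)\Big)=-\,s^{3/2}H(X)+\tfrac12\,s^{1/2}\big(R+|X|^2\big).
\]
Integrating over $[0,\tau]$, the boundary term at $s=0$ vanishes because $R$ is bounded near $\gamma(0)=x$ and $s^{1/2}X(s)$ has a finite limit as $s\to0$, so $s^{3/2}|X|^2\to0$; the other endpoint contributes $\tau^{3/2}\big(R(q)+|X(\tau)|^2\big)$, and the right-hand side integrates to $-\,\mathcal{K}^\tau[\tilde\gamma_q]+\tfrac12 L^\tau_x(q)$. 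Dividing by $\tau^{3/2}$ and using $L^\tau_x=2\sqrt{\tau}\,l^\tau_x$ gives $|X(\tau)|^2=\frac{l^\tau_x}{\tau}-R(q)-\frac{1}{\tau^{3/2}}\mathcal{K}^\tau[\tilde\gamma_q]$, which is $(2)$. For $(1)$, note that $\tilde\gamma_q$ restricted to $[0,s]$ is the minimizing $\mathcal{L}$-geodesic to $\gamma(s)$, so $L^{s}_x(\gamma(s))=\int_0^s\sqrt{\sigma}\,\big(R+|X|^2\big)\,d\sigma$; differentiating in $s$ both directly and by the chain rule (using $\nabla L=2\sqrt{s}\,X$) yields the endpoint first variation $\partial_\tau L^\tau_x(q)=\sqrt{\tau}\,\big(R(q)-|X(\tau)|^2\big)$, hence $\partial_\tau l^\tau_x=\tfrac12\big(R(q)-|X(\tau)|^2\big)-\frac{l^\tau_x}{2\tau}$, and substituting $|X(\tau)|^2$ from $(2)$ gives $(1)$.

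The essential step is $(3)$, which needs the second variation. Take an orthonormal basis $\{e_i\}$ of $(T_qM,g(T-\tau))$ and extend each $e_i$ along $\gamma$ to a field $Y_i$ solving the linear ODE $\nabla_X Y_i=-Rc(Y_i)+\frac{1}{2s}Y_i$ with $Y_i(\tau)=e_i$; one checks $\langle Y_i,Y_j\rangle(s)=\frac{s}{\tau}\delta_{ij}$, so in particular $Y_i(0)=0$. Since $q\in U^\tau_x$ the $\mathcal{L}$-geodesic to $q$ has no conjugate points, so $\nabla^2L^\tau_x(e_i,e_i)$ is realized by the $\mathcal{L}$-Jacobi field with endpoint values $0$ at $s=0$ and $e_i$ at $s=\tau$, and is therefore bounded above by the second variation $\delta^2_{Y_i}\mathcal{L}$ of $\mathcal{L}$ along the variation generated by $Y_i$. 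Summing over $i$ and using $\langle Y_i,Y_j\rangle=\frac{s}{\tau}\delta_{ij}$ to turn the sums of curvature terms into $\frac{s}{\tau}$ times traces (for instance $\sum_i\nabla^2R(Y_i,Y_i)=\frac{s}{\tau}\triangle R$ and $\sum_i\langle R(Y_i,X)X,Y_i\rangle=\frac{s}{\tau}Rc(X,X)$), then simplifying with the evolution equation $\partial_t R=\triangle R+2|Rc|^2$ and the contracted second Bianchi identity $\operatorname{div}Rc=\tfrac12\nabla R$, the sum collapses to $\sum_i\delta^2_{Y_i}\mathcal{L}=\frac{n}{\sqrt{\tau}}-2\sqrt{\tau}\,R(q)-\frac{1}{\tau}\mathcal{K}^\tau[\tilde\gamma_q]$. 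Hence $\triangle L^\tau_x(q)\le\frac{n}{\sqrt{\tau}}-2\sqrt{\tau}\,R(q)-\frac{1}{\tau}\mathcal{K}^\tau[\tilde\gamma_q]$, and dividing by $2\sqrt{\tau}$ gives $(3)$.

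The main obstacle is this last computation: setting up the second-variation formula for $\mathcal{L}$ in the time-dependent metric and getting every curvature term to cancel down to exactly $-\frac{1}{\tau}\mathcal{K}^\tau[\tilde\gamma_q]$ by means of the evolution and Bianchi identities is where all the bookkeeping lives, whereas $(1)$ and $(2)$ drop out quickly once the total-derivative identity and the endpoint first variation are in hand. It is also worth noting that $\triangle L^\tau_x$ and $\nabla^2L^\tau_x$ enter $(3)$ only pointwise on $U^\tau_x$, which is precisely where the statement is made; on the complement these inequalities persist in the barrier sense, but that refinement plays no role here.
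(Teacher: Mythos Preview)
Your sketch is correct and is precisely the standard Perelman argument that the paper has in mind: the paper does not actually prove Proposition~A but quotes it from \cite{P} and \cite{MT}, adding only the remark that (1) and (2) are ``purely computational'' while (3) ``involves an estimate on the second variation formula for $\mathcal{L}_x^\tau$, not unlike the proving of the Laplace comparison theorems.'' Your total-derivative identity for $s^{3/2}(R+|X|^2)$, the chain-rule extraction of $\partial_\tau L$, and the second variation with the test fields $Y_i$ solving $\nabla_X Y_i=-Rc(Y_i)+\tfrac{1}{2s}Y_i$ are exactly the computations carried out in those references, so there is nothing to correct.
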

\begin{remark}
The proofs of (1) and (2) are purely computational.  (3) involves an estimate on the second variation formula for $\mathcal{L}_{x}^{\tau}$, not unlike the proving of the Laplace comparison theorems.
\end{remark}

Rewriting the above we get

\begin{propB}
For $\forall$ $q\in U_{x}^{\tau}$

$b1)$ $\frac{\partial l^{\tau}_{x}}{\partial\tau} - \triangle l^{\tau}_{x}(q) + |\nabla l^{\tau}_{x}|^{2} - R(q,\tau) + \frac{n}{2\tau} = \delta \geq 0$

$b2)$ $2\triangle l^{\tau}_{x}(q) - |\nabla l^{\tau}_{x}|^{2} + R(q,\tau) + \frac{l^{\tau}_{x} - n}{\tau} = -2\delta \leq 0$

In fact, $(b1)$,$(b2)$ hold globally in the distributional sense and $(b1)=-2(b2)$ as
distributions.
\end{propB}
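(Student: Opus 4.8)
The plan is to obtain the pointwise forms of $(b1)$ and $(b2)$ by direct algebraic substitution of Proposition A, and then to upgrade to the global distributional statement by appealing to the barrier form of the Laplace comparison A$(3)$.

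\textbf{Pointwise identities.} Fix $\tau>0$ and $q\in U_x^\tau$, and abbreviate $l=l_x^\tau$, $\mathcal{K}=\mathcal{K}^\tau[\tilde\gamma_q]$, $R=R(q,\tau)$. Set
\[ \delta\ :=\ \frac{n}{2\tau}-R-\frac{1}{2\tau^{3/2}}\mathcal{K}-\triangle l\ \geq\ 0, \]
the quantity that is nonnegative precisely because A$(3)$ is an inequality, while A$(1)$ and A$(2)$ are equalities. Substituting the expression for $\partial_\tau l$ from A$(1)$, for $|\nabla l|^2$ from A$(2)$, and $\triangle l=\frac{n}{2\tau}-R-\frac{1}{2\tau^{3/2}}\mathcal{K}-\delta$ into the left-hand side of $(b1)$, every term containing $R$, $l/\tau$, $\mathcal{K}/\tau^{3/2}$ and $n/\tau$ cancels in pairs, leaving exactly $\delta$; the same substitution into the left-hand side of $(b2)$ collapses to $-2\delta$. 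Thus $(b1)$ and $(b2)$ hold pointwise on the open dense set $U_x^\tau$, both governed by the single nonnegative defect $\delta$. Note that although $\mathcal{K}$ is used in this computation it does not survive in the final form of either identity.

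\textbf{Passage to distributions.} Since $l_x^\tau$ is only locally Lipschitz on $M$ and smooth exactly on $U_x^\tau$, in $(b1)$ and $(b2)$ the term $\triangle l_x^\tau$ must be read as the distributional Laplacian, whereas $\partial_\tau l_x^\tau$, $|\nabla l_x^\tau|^2$, $l_x^\tau$ and $R$ are genuine $L^\infty_{loc}$ functions agreeing a.e.\ with their pointwise values. The global statement is therefore equivalent to the assertion that the distributional Laplacian of $l_x^\tau$ is its a.e.\ pointwise Laplacian plus a nonpositive (singular) measure supported on $M\setminus U_x^\tau$; equivalently, that A$(3)$ holds in the barrier sense at every point of $M$. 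This is the reduced-length analogue of the Laplace comparison theorem for distance functions and is standard (see \cite{P}, \cite{MT}), the proof there using existence of a minimizing $\mathcal{L}$-geodesic through an arbitrary point together with the second-variation estimate underlying A$(3)$. Granting it, $(b1)$ holds distributionally on $M$ with $\delta$ a nonnegative Radon measure, $(b2)$ holds with the measure $-2\delta$, and since $\triangle l_x^\tau$ is the only term carrying a singular part — entering the left side of $(b1)$ with coefficient $-1$ and that of $(b2)$ with coefficient $+2$ — the two distributional identities are proportional with constant $-2$, as asserted.

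\textbf{Main obstacle.} The pointwise step is bookkeeping cancellation; the substance is the passage to distributions, namely ruling out negative mass concentrating on $M\setminus U_x^\tau$ when going from the smooth identities on $U_x^\tau$ to distributions on all of $M$. For the present paper this is packaged as the barrier form of A$(3)$, already available in \cite{MT}; the remaining checks — that barrier and distributional upper bounds coincide here (true since $l_x^\tau$ is locally Lipschitz and the comparison bound is continuous) and that the singular parts of $(b1)$ and $(b2)$ match up to the factor $-2$ (immediate from the coefficients of $\triangle l_x^\tau$) — are routine.
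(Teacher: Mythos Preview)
Your proposal is correct and matches the paper's approach: the paper itself gives no proof of Proposition~B beyond the phrase ``Rewriting the above we get,'' i.e.\ it treats the pointwise statements as direct algebraic consequences of Proposition~A (exactly your substitution argument) and defers the global distributional validity of A$(3)$, hence of $(b1)$ and $(b2)$, to \cite{P} and \cite{MT}. Your write-up simply makes explicit the bookkeeping and the reason the singular parts of $(b1)$ and $(b2)$ are proportional, which the paper leaves implicit.
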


With the above we begin to analyze the reduce length functions on a controlled Ricci flow.

\begin{prop}\label{l_m}
Let $(M,g(t))$ be a $(C,\kappa)$-controlled Ricci flow and $(x,T)\in M\times (-\infty,0)$. Let $l^{\tau}_{x}$ be the reduced length function.  Then there exists $m=m(n,C)$ such that

$1)$ $l^{\tau}_{x}(y)\geq -m$ $\forall y\in M$

$2)$ $|l^{\tau}_{x}(x)| \leq m$
\end{prop}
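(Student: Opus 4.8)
The plan is to extract both bounds from the differential inequalities in Proposition B, using the controlled curvature hypothesis $|Rm[g(t)]| \le C/|t|$ (equivalently $|R| \le nC/|t|$ on the relevant time interval) together with explicit test curves. After rescaling we may assume $T = -1$, so that for $\tau \in (0,1)$ the curvature along the flow at time $T-\tau = -(1+\tau)$ is bounded by $C/(1+\tau) \le C$, and more generally at time $-(1+\tau)$ we have $|R| \le nC/(1+\tau)$.

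For part (1), the lower bound $l^\tau_x(y) \ge -m$, I would work with inequality $(b2)$ of Proposition B, which holds globally in the distributional sense: $2\triangle l^\tau_x - |\nabla l^\tau_x|^2 + R + \frac{l^\tau_x - n}{\tau} \le 0$. The idea is to run a maximum-principle / barrier argument on the function $l^\tau_x$ for fixed $\tau$, or better, to view $v = l^\tau_x$ as evolving in $\tau$ and apply the parabolic inequality $(b1)$: $\partial_\tau l^\tau_x - \triangle l^\tau_x + |\nabla l^\tau_x|^2 - R + \frac{n}{2\tau} \ge 0$. Combining with $|R| \le nC/(1+\tau)$ one sees that $\min_y l^\tau_x(y)$, as a function of $\tau$, satisfies a differential inequality forcing it to stay bounded below: roughly, at an interior spatial minimum $\triangle l \ge 0$ and $|\nabla l|^2 = 0$, so $\partial_\tau (\min l) \ge R - \frac{n}{2\tau} \ge -\frac{nC}{1+\tau} - \frac{n}{2\tau}$, and since $l^\tau_x \to 0$ as $\tau \to 0$ (the reduced length from $(x,T)$ tends to $0$), integrating from $0$ gives $\min l \ge -\frac{n}{2}\log(1/\tau) - \dots$; this is not yet a clean bound, so instead one should use the scale-invariance of $l$ and reduce to proving the bound at a fixed $\tau$, say $\tau = 1$, after which the estimate $l^\tau_x \ge -m(n,C)$ follows by a compactness-free argument — this is precisely the standard Perelman estimate $l \ge -n/2$ type bound adapted to the $(C,\kappa)$-controlled setting, where $C$ enters because $R$ need not be nonnegative here. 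Alternatively, one can use that $e^{-l^\tau_x}$ integrates against the conjugate heat kernel weight to total mass $\le 1$ (the monotonicity of the reduced volume), which immediately bounds $l$ below on any fixed ball; combined with the gradient bound $|\nabla l^\tau_x|^2 \le \frac{l^\tau_x}{\tau} - R + (\text{error})$ from Proposition A(2), a pointwise lower bound propagates everywhere.

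For part (2), the bound $|l^\tau_x(x)| \le m$ at the basepoint, I would use explicit comparison curves. The upper bound $l^\tau_x(x) \le m$ comes from plugging the constant curve $\gamma(\sigma) \equiv x$ (so $X \equiv 0$) into the $\mathcal{L}$-length: $\mathcal{L} = \int_0^\tau \sqrt{\sigma}\, R(x, T-\sigma)\, d\sigma \le \int_0^\tau \sqrt{\sigma} \cdot \frac{nC}{1+\sigma}\, d\sigma \le nC \cdot \frac{2}{3}\tau^{3/2}$, hence $l^\tau_x(x) = \mathcal{L}/(2\sqrt{\tau}) \le \frac{nC}{3}\tau \le \frac{nC}{3}$ for $\tau \le 1$; scale-invariance removes the restriction on $\tau$. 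The lower bound $l^\tau_x(x) \ge -m$ is the special case $y = x$ of part (1), so nothing new is needed.

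The main obstacle I anticipate is part (1): unlike Perelman's original setting, $R$ is not assumed nonnegative, so the clean inequality $l \ge 0$ (or $l$ bounded below by a universal constant) is not immediate, and one must carefully track how the curvature constant $C$ degrades the estimate — in particular, showing the lower bound is \emph{uniform in $\tau$} (not just locally) requires either exploiting scale-invariance to reduce to $\tau = 1$ or running the parabolic maximum principle with the time-dependent bound $|R(\cdot, -(1+\tau))| \le C/(1+\tau)$ and checking that the accumulated error over $\tau \in (0,\infty)$ stays finite after the rescaling normalization. Getting this bookkeeping right, and justifying the maximum principle argument on a noncompact manifold (which is where a barrier at spatial infinity, supplied by the $\mathcal{L}$-distance growth estimate, is needed), is the delicate point; everything else is a direct computation with test curves and the inequalities of Propositions A and B.
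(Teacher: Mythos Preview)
Your approach to part (2), using the constant curve $\gamma(\sigma)\equiv x$ as a competitor for the upper bound, is correct and is exactly what the paper does. However, your approach to part (1) has a genuine gap, and it obscures the fact that the whole proposition is a two-line computation.

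For part (1) you try to run a parabolic maximum principle on $l^\tau_x$ via Proposition B, but you yourself observe that at a spatial minimum the inequality $(b1)$ only gives $\partial_\tau(\min l)\ge -\tfrac{n}{2\tau}-\tfrac{nC}{1+\tau}$, whose integral from $0$ diverges logarithmically; you then appeal vaguely to scale-invariance, to reduced-volume monotonicity, or to the gradient identity in Proposition A(2) to patch this. None of these closes the argument here: scale-invariance lets you normalize $T$ but does not by itself produce a bound at any fixed $\tau$; the reduced-volume estimates in the paper are proved \emph{after} this proposition (and use it); and the gradient identity in Proposition A(2) involves the term $\mathcal{K}^\tau$, whose bound (the next lemma in the paper) again uses the present proposition. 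So as written your route to (1) is either incomplete or circular, and the noncompact maximum principle you flag as ``the delicate point'' is an obstacle you never actually overcome.

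The missing idea is much more elementary than anything you propose: work directly from the definition of $\mathcal{L}$, not from Propositions A or B. Since $T<0$ one has $|T-\tau|\ge \tau$, hence along any admissible curve $|R(\gamma(\tau),T-\tau)|\le \tilde C/|T-\tau|\le \tilde C/\tau$ with $\tilde C=\tilde C(n,C)$. For any minimizing $\mathcal{L}$-geodesic $\gamma_y$ one then has
\[
\mathcal{L}[\gamma_y]=\int_0^{\bar\tau}\sqrt{\tau}\bigl(R+|X|^2\bigr)\,d\tau \;\ge\; -\tilde C\int_0^{\bar\tau}\tau^{-1/2}\,d\tau \;=\; -2\tilde C\sqrt{\bar\tau},
\]
since $|X|^2\ge 0$; dividing by $2\sqrt{\bar\tau}$ gives $l^{\bar\tau}_x(y)\ge -\tilde C$ for every $y$ and every $\bar\tau$, with no rescaling, no maximum principle, and no issue at spatial infinity. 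The upper bound at $x$ is obtained symmetrically from the constant path, exactly as you do. The single observation you are missing is the inequality $|T-\tau|\ge\tau$, which converts the controlled-flow curvature bound into one that is integrable against $\sqrt{\tau}$ uniformly in $\bar\tau$.
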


\begin{proof}
Let $\gamma_{y}$ be a minimizing $\mathcal{L}$-geodesic from $(x,T)$ to $(y,T-\bar{\tau})$.

Then $\mathcal{L}[\gamma_{y}]= \int_{0}^{\bar{\tau}}\sqrt{\tau}(R(\tilde{\gamma}(\tau)) +|X|^{2})d\tau$.  Now $|R|(y,\tau)\leq \frac{\tilde{C}}{|T-\tau|}\leq \frac{\tilde{C}}{\tau}$ with $\tilde{C}=\tilde{C}(n,C)$ and hence $\mathcal{L}[\gamma_{y}]\geq -\tilde{C}\int_{0}^{\bar{\tau}}\tau^{-1/2}d\tau=-2\tilde{C}\sqrt{\bar{\tau}}$.

If $y=x$ we can let $\sigma$ be the constant path to see $L_{x}^{\tau} \leq \mathcal{L}[\sigma]= \int_{0}^{\bar{\tau}}\sqrt{\tau}R \leq 2\tilde{C}\sqrt{\bar{\tau}}$
\end{proof}

To get growth estimates on the reduced length functional we will show the following

\begin{lem}
Let $(M,g(t))$ be a $(C,\kappa)$-controlled Ricci flow and $(x,T)\in M\times (-\infty,0)$. Let $l^{\tau}_{x}$ be the reduced length function.  Let $(y,T-\bar{\tau})\in M\times(-\infty,0)$, $\bar{\tau}>0$, and let $\gamma_{y}$ be a minimizing $\mathcal{L}$-geodesic from $(x,T)$ to $(y,T-\bar{\tau})$.  Then there exists $A=A(n,C)$ such that $|\mathcal{K}^{\bar{\tau}}[\gamma_{y}]| \leq A\sqrt{\tau}(1+m+l_{x}^{\bar{\tau}}(y))$
\end{lem}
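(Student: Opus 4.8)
The plan is to bound $\mathcal{K}^{\bar\tau}[\gamma_y]$ by playing the two identities in Proposition A against each other so that the unknown quantity $\mathcal{K}$ is expressed in terms of $l_x^{\bar\tau}$, $R$, and $\partial_\tau l_x^{\bar\tau}$, all of which are controllable. Concretely, adding $(1)$ and $(2)$ of Proposition A kills the $\mathcal{K}$ terms' signs only partially, but subtracting them isolates it: from $(1)$ and $(2)$ we get $\frac{\partial l_x^\tau}{\partial\tau} + |\nabla l_x^\tau|^2 = \frac{3}{2\tau^{3/2}}\mathcal{K}^\tau[\tilde\gamma_q] - \frac{l_x^\tau}{\tau}$ wait — more usefully, $(1)-(2)$ gives $\frac{\partial l_x^\tau}{\partial\tau} - |\nabla l_x^\tau|^2 = 2R - \frac{2l_x^\tau}{\tau} + \frac{3}{2\tau^{3/2}}\mathcal{K}^\tau$, so that
\[
\frac{3}{2\tau^{3/2}}\mathcal{K}^\tau[\tilde\gamma_q] = \frac{\partial l_x^\tau}{\partial\tau} - |\nabla l_x^\tau|^2 - 2R(q,\tau) + \frac{2l_x^\tau}{\tau}.
\]
Thus $\mathcal{K}^{\bar\tau}[\gamma_y]$ is controlled once I control each term on the right at $\tau=\bar\tau$, along the minimizing geodesic. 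The right-hand pieces involving $R$ are handled by the curvature bound $|R|\le \tilde C/\tau$ from the controlled hypothesis; the term $l_x^\tau/\tau$ is exactly what appears in the desired bound; so the real work is to control $\partial_\tau l_x^\tau$ and $|\nabla l_x^\tau|^2$ from above in terms of $1+m+l_x^{\bar\tau}(y)$.

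For $|\nabla l_x^\tau|^2$: by Proposition A(2), $|\nabla l_x^\tau|^2 = \frac{l_x^\tau}{\tau} - R - \frac{1}{\tau^{3/2}}\mathcal{K}^\tau$, which again involves $\mathcal{K}$, so a direct substitution is circular. Instead I would argue more robustly by a chain/ODE comparison along the geodesic. The standard move (as in \cite{MT} or \cite{P}) is to track $h(\tau) := \tau^{3/2} H(X(\tau))$ along the $\mathcal{L}$-geodesic and use the differential inequality for $H$ coming from the trace Harnack computation, which on a controlled flow yields $\frac{d}{d\tau}\big(\tau^{3/2}H\big) \ge -\frac{1}{\tau}\big(\tau^{3/2}H\big) - (\text{curvature terms})$, i.e. $h$ cannot decay too fast. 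Integrating this from $0$ to $\bar\tau$ and using that the integral of $h$ against the measure is essentially $\mathcal{K}^{\bar\tau}$ (with appropriate weights), together with the already-known pointwise bound $l_x^\tau \ge -m$ and the value $|l_x^\tau(x)|\le m$, produces the linear-in-$l$ bound. The factor $\sqrt{\tau}$ on the right is dictated by scaling: under $g\mapsto c^{-1}g(T+ct)$, $\mathcal{K}^\tau$ scales like $\sqrt c \cdot \mathcal{K}^{\tau/c}$ while $l$ is scale invariant, so the estimate must carry exactly one power of $\sqrt\tau$ and no $\tau$-dependence in the constant $A(n,C)$.

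I expect the main obstacle to be the step that controls $\partial_\tau l_x^\tau$ (equivalently, controlling $\mathcal{K}^\tau$ from \emph{below} and \emph{above} simultaneously) without circularity: Proposition A gives three relations all linking $\partial_\tau l$, $|\nabla l|^2$, $\Delta l$, and $\mathcal{K}$, and naive elimination just reshuffles the unknown. The way around this is to bring in the differential inequality for the Harnack quantity $H(X)$ along the geodesic — this is genuinely new information not contained in Proposition A — and to integrate it carefully near the singular endpoint $\tau = 0$, where $H$ and $R$ blow up like $\tau^{-1}$ and $\mathcal{K}$'s integrand $\tau^{3/2}H$ is only borderline integrable. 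Checking that the boundary contributions at $\tau\to 0$ vanish (using that minimizing $\mathcal{L}$-geodesics have $\sqrt\tau|X|^2$ integrable, a consequence of $\mathcal{L}[\gamma_y]$ being finite and bounded below as in Proposition \ref{l_m}) is the delicate part. Once that is in hand, assembling the terms and absorbing constants into $A(n,C)$ is routine, and the $\sqrt\tau(1+m+l_x^{\bar\tau}(y))$ form follows by the scaling remark plus the elementary bound $l_x^{\bar\tau}(y) \ge -m$.
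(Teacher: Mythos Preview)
Your approach has a genuine gap: you never use the \emph{definition} of $\mathcal{K}^{\bar\tau}[\gamma_y]$ as an explicit integral, and instead try to extract it from the identities in Proposition~A. As you correctly notice, that route is circular --- those identities at the endpoint relate $\mathcal{K}$ to $\partial_\tau l$, $|\nabla l|^2$, $R$, and $l$, but the bounds on $\partial_\tau l$ and $|\nabla l|^2$ (Proposition~C) are proved \emph{after} and \emph{using} the present lemma. Your fallback, a ``differential inequality for $H(X)$ along the geodesic,'' is left vague; there is no such inequality readily available here (the trace Harnack is a statement about the flow, not an ODE for $H$ along an $\mathcal{L}$-geodesic), and you do not indicate what it would be or how it would integrate.

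The paper's argument is much more elementary and direct. Write out
\[
\mathcal{K}^{\bar\tau}[\gamma_y]=\int_0^{\bar\tau}\tau^{3/2}\Big(-R_\tau-\tfrac{R}{\tau}-2\langle\nabla R,X\rangle+2Rc(X,X)\Big)\,d\tau
\]
and bound each piece using the controlled hypothesis, which gives $|R|\le \tilde C/\tau$, $|\nabla R|\le \tilde C/\tau^{3/2}$, $|R_\tau|\le \tilde C/\tau^{2}$, and $|Rc|\le \tilde C/\tau$. The first two terms contribute $O(\sqrt{\bar\tau})$. For the cross term use $2|\langle\nabla R,X\rangle|\le \tau|\nabla R|^2+\tau^{-1}|X|^2$; together with the $Rc(X,X)$ term this produces integrals dominated by a constant times $\int_0^{\bar\tau}\sqrt{\tau}\,|X|^2\,d\tau$. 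The key observation is that
\[
\int_0^{\bar\tau}\sqrt{\tau}\,|X|^2\,d\tau=\int_0^{\bar\tau}\sqrt{\tau}\,(R+|X|^2)\,d\tau-\int_0^{\bar\tau}\sqrt{\tau}\,R\,d\tau
= 2\sqrt{\bar\tau}\,l_x^{\bar\tau}(y)+O(\sqrt{\bar\tau}),
\]
since $\gamma_y$ is minimizing and $|R|\le \tilde C/\tau$. Collecting terms gives $|\mathcal{K}^{\bar\tau}[\gamma_y]|\le A\sqrt{\bar\tau}\,(1+m+l_x^{\bar\tau}(y))$. No Harnack differential inequality, no Proposition~A, and no delicate boundary analysis at $\tau\to 0$ is needed; the only inputs are the Shi-type derivative bounds and the definition of $l$.
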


\begin{proof}
As before $\exists$ $\tilde{C}=\tilde{C}(n,C)$ such that $|R|(y,\tau)\leq \frac{\tilde{C}}{|T-\tau|}\leq \frac{\tilde{C}}{\tau}$, $|\nabla R| \leq \frac{\tilde{C}}{\tau^{3/2}}$ and $|\frac{\partial R}{\partial\tau}|\leq \frac{\tilde{C}}{\tau^{2}}$.  We will use $\tilde{C}$ and $\tilde{C}'$ to denote a constant
depending on only $n,C$, though $\tilde{C}$ itself may change from line to line.

So we have

\[
\mathcal{K}^{\bar{\tau}}[\gamma_{y}] = \int_{0}^{\bar{\tau}} \tau^{3/2} (-R_{\tau}-\frac{R}{\tau}-2<\nabla R,X>+2Rc(X,X))d\tau
\]

\[
\leq \tilde{C}\int_{0}^{\bar{\tau}}\tau^{-1/2} + 2\int_{0}^{\bar{\tau}}\tau^{3/2}|\nabla R||X| + \tilde{C}\int_{0}^{\bar{\tau}}\sqrt{\tau}|X|^{2}
\]

\[
\leq \tilde{C}\sqrt{\bar{\tau}} + \int_{0}^{\bar{\tau}}\tau^{3/2}(\tau|\nabla R|^{2}+\frac{|X|^{2}}{\tau}) + \tilde{C}\int_{0}^{\bar{\tau}}\sqrt{\tau}(R+|X|^{2})- \tilde{C}\int_{0}^{\bar{\tau}}\sqrt{\tau}R
\]

\[
\leq \tilde{C}\sqrt{\bar{\tau}} + \tilde{C}'\sqrt{\bar{\tau}}l_{x}^{\bar{\tau}}(y) \leq \tilde{C}\sqrt{\bar{\tau}} + \tilde{C}'\sqrt{\bar{\tau}}(l_{x}^{\bar{\tau}}(y) + m)
\]

\[
\leq A\sqrt{\bar{\tau}}(1 + m + l_{x}^{\bar{\tau}}(y))
\]
\end{proof}

The following proposition is key to controlled the reduced length function.  Under various curvature assumptions similar estimates may be found in \cite{P},\cite{MT},\cite{En}.

\begin{prop}
There exists $A=A(n,C)$ such that $\forall y\in M$

$1)$ $l_{x}^{\tau}(y) \leq A(1+\frac{d_{g(T-\tau)}(x,y)}{\sqrt{\tau}})^{2}$

$2)$ $|\nabla l_{x}^{\tau}|(y) \leq \frac{A}{\sqrt{\tau}} (1+\frac{d_{g(T-\tau)}(x,y)}{\sqrt{\tau}})$

$3)$ $|\frac{\partial l_{x}^{\tau}}{\partial\tau}|(y) \leq \frac{A}{\tau}(1+\frac{d_{g(T-\tau)}(x,y)}{\sqrt{\tau}})^{2}$
\end{prop}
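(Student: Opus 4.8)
The plan is to prove $(1)$ directly by constructing an efficient competitor for the $\mathcal{L}$-distance, and then to read off $(2)$ and $(3)$ by substituting $(1)$, Proposition~\ref{l_m}, and the $\mathcal{K}$-estimate of the preceding lemma into Proposition~A. Throughout, $\tilde{C}$ denotes a constant depending only on $n$ and $C$ which may change from line to line. The one preliminary needed for $(1)$ is a crude distance-distortion estimate: writing the backwards time as $s$, so that $|Rm[g(T-s)]|\leq C/(|T|+s)\leq C/s$ and hence $|Rc[g(T-s)]|\leq \tilde{C}/s$, the rescaled metrics $h(s):=g(T-s)$ satisfy $\partial_{s}h=2Rc[h]$, so that $(s_{1}/s_{2})^{\tilde{C}}\,h(s_{1})\leq h(s_{2})\leq (s_{2}/s_{1})^{\tilde{C}}\,h(s_{1})$ for $0<s_{1}\leq s_{2}$; in particular all metrics $h(s)$ with $s\in[\tau/2,\tau]$ are uniformly comparable to $g(T-\tau)$, with constant depending only on $n$ and $C$.

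To prove $(1)$, fix $y$, set $d=d_{g(T-\tau)}(x,y)$, and use the admissible curve $\tilde{\gamma}(s)=(\gamma(s),T-s)$ on $[0,\tau]$ where $\gamma$ rests at $x$ on $[0,\tau/2]$ and on $[\tau/2,\tau]$ traverses a minimizing $g(T-\tau)$-geodesic from $x$ to $y$ at constant $g(T-\tau)$-speed (an arbitrarily small smoothing makes this genuinely admissible and changes $\mathcal{L}$ negligibly, since $L_{x}^{\tau}$ is an infimum). On $[0,\tau/2]$ only the scalar term contributes and $\int_{0}^{\tau/2}\sqrt{s}\,|R|(x,T-s)\,ds\leq \tilde{C}\int_{0}^{\tau/2}s^{-1/2}\,ds\leq \tilde{C}\sqrt{\tau}$. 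On $[\tau/2,\tau]$ the scalar term is bounded the same way, while the metric comparison gives $|X(s)|^{2}_{h(s)}\leq \tilde{C}\,d^{2}/\tau^{2}$, so $\int_{\tau/2}^{\tau}\sqrt{s}\,|X|^{2}_{h(s)}\,ds\leq \tilde{C}\,d^{2}/\sqrt{\tau}$. Hence $\mathcal{L}[\tilde{\gamma}]\leq \tilde{C}\sqrt{\tau}+\tilde{C}\,d^{2}/\sqrt{\tau}$ and $l_{x}^{\tau}(y)=\mathcal{L}[\tilde{\gamma}]/(2\sqrt{\tau})\leq A(1+d^{2}/\tau)\leq A(1+d/\sqrt{\tau})^{2}$. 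The reason for keeping $\gamma$ at rest over the first half of the interval is exactly to keep the $|X|^{2}$ term away from the region near $s=0$, where the weight $\sqrt{s}$ is small but the distortion factor $(\tau/s)^{\tilde{C}}$ is large; controlling this is the only delicate point, and a reparametrization with $\gamma$ moving uniformly on all of $[0,\tau]$ would fail once the $n,C$-constant in the distortion exponent is large.

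Given $(1)$, parts $(2)$ and $(3)$ are routine. At $q\in U_{x}^{\tau}$ with minimizing $\mathcal{L}$-geodesic $\tilde{\gamma}_{q}$, Proposition~A$(2)$ gives $|\nabla l_{x}^{\tau}|^{2}(q)=l_{x}^{\tau}(q)/\tau-R(q,T-\tau)-\tau^{-3/2}\mathcal{K}^{\tau}[\tilde{\gamma}_{q}]$. Using $-m\leq l_{x}^{\tau}(q)\leq A(1+d_{g(T-\tau)}(x,q)/\sqrt{\tau})^{2}$ from Proposition~\ref{l_m} and part $(1)$, together with $|R|\leq \tilde{C}/\tau$ and $|\mathcal{K}^{\tau}[\tilde{\gamma}_{q}]|\leq A\sqrt{\tau}(1+m+l_{x}^{\tau}(q))\leq \tilde{C}\sqrt{\tau}(1+d_{g(T-\tau)}(x,q)/\sqrt{\tau})^{2}$ from the preceding lemma, every term on the right is at most a constant times $\tau^{-1}(1+d_{g(T-\tau)}(x,q)/\sqrt{\tau})^{2}$, which is $(2)$ at $q$. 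The same substitution into Proposition~A$(1)$, namely $\partial_{\tau}l_{x}^{\tau}(q)=R(q,T-\tau)-l_{x}^{\tau}(q)/\tau+\tfrac{1}{2}\tau^{-3/2}\mathcal{K}^{\tau}[\tilde{\gamma}_{q}]$, yields $(3)$ at $q$. Since $U_{x}^{\tau}$ is open, dense and of full measure while $l_{x}^{\tau}$ is locally Lipschitz on $M\times(0,\infty)$, these pointwise bounds extend to all of $M$ in the Lipschitz (hence distributional) sense, and absorbing the various $n,C$-constants into a single $A=A(n,C)$ completes the proof.
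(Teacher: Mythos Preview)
Your argument is correct. The derivation of $(2)$ and $(3)$ from $(1)$ via Proposition~A and the $\mathcal{K}$-estimate is exactly what the paper does, so the only point of comparison is the proof of $(1)$.

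Here the two approaches are genuinely different. The paper first deduces from Proposition~A$(2)$ and the $\mathcal{K}$-estimate the differential inequality $|\nabla l_{x}^{\tau}|^{2}\leq \frac{A}{\tau}(1+m+l_{x}^{\tau})$, then integrates this along a $g(T-\tau)$-minimizing geodesic from $x$ to $y$ by an ODE comparison with $\dot h=\sqrt{\tfrac{A}{\tau}(1+m+h)}$, whose solution is quadratic in $d_{g(T-\tau)}(x,y)/\sqrt{\tau}$. Your route bypasses Proposition~A entirely for $(1)$: you build an explicit admissible competitor that rests at $x$ on $[0,\tau/2]$ and then runs along a fixed $g(T-\tau)$-geodesic, using the $|Rc|\leq \tilde C/s$ bound to control the metric distortion on $[\tau/2,\tau]$. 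This is more elementary and self-contained (it uses only the definition of $\mathcal{L}$ as an infimum plus the crude distortion estimate), whereas the paper's ODE argument has the structural advantage that the gradient inequality it establishes along the way is itself a reusable intermediate result (indeed, the paper reuses exactly that inequality in the proof of the subsequent lower-bound proposition). Both give the same quadratic growth with constants depending only on $n,C$.
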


\begin{proof}
Let $q\in U_{x}^{\tau}$.  Then

\[
|\nabla l_{x}^{\tau}|^{2}(q)= \frac{l^{\tau}_{x}}{\tau} - R(q,\tau) - \frac{1}{\tau^{3/2}}\mathcal{K}^{\tau}[\tilde{\gamma}_{q}]
\]

\[
\leq \frac{A}{\tau}(1+m+l_{x}^{\tau}(q))
\]

 for some $A=A(n,c)$.  Since $l_{x}^{\tau}$ is Lipschitz this must hold on all $M$ in Lipschitz sense.  Let $\sigma(s):[0,d_{g(T-\tau)}]$ be a minimizing geodesic from $x$ to $y$ in $M\times\{T-\tau\}$.  Let $z(s) = l_{x}^{\tau}(\sigma(s))$.  Then $z(s)$ is Lipschitz and
$z(s)>-m$ by Proposition (\ref{l_m}).  Let $h(s)$ be a solution of $\dot{h}=\sqrt{\frac{A}{\tau}(1+m+h(s))}$ with $h(0) = l_{x}^{\tau}(x)>-m$ (so the solution exists, is unique and is smooth).

Claim:  $z(s)\leq h(s)$ $\forall s\in [0,d]$

Proof of Claim:

This is straightforward, the main point is to stay away from the singular initial condition $-(m+1)$.  Since $z(s)>-m$, $|\dot{z}|\leq \sqrt{\frac{A}{\tau}(1+m+z(s))}$.  Let $h_{\epsilon}(s)$ be a solution with $h_{\epsilon}(0)=h(0)+\epsilon$ (so also smooth and unique).  If $z(s)$ is not less than $h_{\epsilon}(s)$ then $\exists$ a first $s'\in (0,d]$ such that $z(s')=h_{\epsilon}(s')$.  But for $s<s'$ $|\dot{z}|\leq \sqrt{\frac{A}{\tau}(1+m+z(s))} < \sqrt{\frac{A}{\tau}(1+m+h_{\epsilon}(s))}=h_{\epsilon}(s)$, which is a contradiction to $z(s')=h_{\epsilon}(s')$.  Hence $z(s)<h_{\epsilon}(s)$ $\forall s\in [0,d]$ and by limiting we get our result. $\circ$

Now it is easy to check that for some $a_{i}=a_{i}(m,A)$ $h(s)= a_{2}(\frac{s}{\sqrt{\tau}})^{2} + a_{1}(\frac{s}{\sqrt{\tau}}) + a_{0}$.  Hence we get
that $l_{x}^{\tau}(y) \leq A(1+\frac{d_{g(T-\tau)}(x,y)}{\sqrt{\tau}})^{2}$ for some $A=A(n,C)$.  By plugging into $(a1)$, $(a2)$ of proposition A and using our bound from the last lemma we get our result.
\end{proof}

We will need the following to get good lower bound estimates on $l_{x}^{\tau}$:

\begin{lem}
Let $(M,g(t))$ be a $(C,\kappa)$-controlled Ricci flow.  Let $\sigma(\eta)$ be a unit speed minimizing geodesic in $(M,g(T-\tau))$.  Then there exists $A=A(n,C)$ such that $\int_{\sigma}Rc(\dot{\sigma},\dot{\sigma}) \leq \frac{A}{\sqrt{|T-\tau|}}$
\end{lem}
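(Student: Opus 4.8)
The plan is to prove the integrated Ricci bound along a minimizing geodesic $\sigma$ in $(M,g(T-\tau))$ by the standard second-variation argument, exploiting the decay $|Rm|\le C/|t|$ together with the parabolic Shi-type estimates $|\nabla Rm|\le \tilde C/|t|^{3/2}$ and $|\partial_t Rm|\le \tilde C/t^2$ proved earlier. First I would fix the length $\ell=$ length of $\sigma$, parametrize it by arclength $\eta\in[0,\ell]$, and — if $\ell$ is short, say $\ell\le \sqrt{|T-\tau|}$ — dispatch the estimate trivially from $|Rc|\le \tilde C/|T-\tau|$, since then $\int_\sigma Rc(\dot\sigma,\dot\sigma)\le \tilde C\,\ell/|T-\tau|\le \tilde C/\sqrt{|T-\tau|}$. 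So the content is in the case $\ell\ge\sqrt{|T-\tau|}$.

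For the long case I would use the index form. Let $\{e_i\}_{i=1}^{n-1}$ be a parallel orthonormal frame along $\sigma$ orthogonal to $\dot\sigma$, and test the index form with the variation vector fields $V_i(\eta)=\phi(\eta)e_i$ for a fixed cutoff $\phi$ with $\phi(0)=\phi(\ell)=0$. Since $\sigma$ is minimizing, the second variation of arclength is nonnegative for each $V_i$, so summing over $i$ gives
\[
0\ \le\ \sum_{i=1}^{n-1}\int_0^\ell\Bigl(|\dot\phi|^2 - \phi^2\,\langle R(e_i,\dot\sigma)\dot\sigma,e_i\rangle\Bigr)\,d\eta
\ =\ (n-1)\int_0^\ell|\dot\phi|^2\,d\eta - \int_0^\ell \phi^2\,Rc(\dot\sigma,\dot\sigma)\,d\eta .
\]
This by itself only controls the part of the Ricci integrand weighted by $\phi^2$; to recover $\int_\sigma Rc(\dot\sigma,\dot\sigma)$ with weight $1$ I would choose $\phi$ to equal $1$ on the middle portion $[\sqrt{|T-\tau|},\ell-\sqrt{|T-\tau|}]$ and ramp linearly to $0$ over intervals of length $\sqrt{|T-\tau|}$ at the two ends. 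Then $\int_0^\ell|\dot\phi|^2 = 2/\sqrt{|T-\tau|}$, and the two end intervals contribute at most $2\cdot\sqrt{|T-\tau|}\cdot \tilde C/|T-\tau| = 2\tilde C/\sqrt{|T-\tau|}$ to $\int_\sigma Rc(\dot\sigma,\dot\sigma)$ directly from the pointwise bound. Combining, $\int_\sigma Rc(\dot\sigma,\dot\sigma)\le (n-1)\cdot 2/\sqrt{|T-\tau|} + 2\tilde C/\sqrt{|T-\tau|}=A/\sqrt{|T-\tau|}$ with $A=A(n,C)$, which is the claim. (Here I have silently used that along a minimizing geodesic the Ricci term integrated against any nonnegative weight with the above endpoint conditions is bounded this way; note we do not need a sign on $Rc$, only the pointwise two-sided bound at the ends, because the index form gives the upper bound on the weighted integral on the interior and the pointwise bound handles the ends.)

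The one subtlety — and the step I would be most careful about — is that the curvature bound $|Rm|\le C/|t|$ is at the single time slice $t=T-\tau$, whereas $\sigma$ lives entirely in that slice, so no time-derivative or parabolic estimates are actually needed for this particular lemma; the earlier Shi estimates are a red herring here and the proof is purely Riemannian. The only genuine point to check is that $|T-\tau|$ in the statement really is $|t|$ at the relevant slice (i.e. $\tau$ here denotes the backward time parameter so that the slice is $g(T-\tau)$ and $|Rm|_{g(T-\tau)}\le C/|T-\tau|$), which is consistent with the conventions fixed in the definitions of admissible curves and $(C,\kappa)$-controlled flows. Given that, the argument above is complete, and the main obstacle is merely bookkeeping the two cases $\ell\lessgtr\sqrt{|T-\tau|}$ and the cutoff so that the final constant depends only on $n$ and $C$.
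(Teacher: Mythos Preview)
Your proposal is correct and is essentially the same argument as the paper's: split into short and long geodesics, use the second variation of arclength with a parallel orthonormal frame and a piecewise-linear cutoff of slope $1/\sqrt{|T-\tau|}$ at each end, and absorb the end contributions using the pointwise bound $|Rc|\le \tilde C/|T-\tau|$. The only cosmetic discrepancy is that your threshold $\ell\ge\sqrt{|T-\tau|}$ should be $\ell\ge 2\sqrt{|T-\tau|}$ so that the two ramp intervals do not overlap, exactly as the paper does; this changes nothing substantive.
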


\begin{proof}
Let $s=|T-\tau|$.  If the length of $\sigma$, $|\sigma|=d$, is less than $2\sqrt{s}$, then
we see

\[
\int_{\sigma}Rc(\dot{\sigma},\dot{\sigma}) \leq \int_{0}^{2\sqrt{s}}\frac{\tilde{C}}{s}d\eta \leq \frac{A}{s}\sqrt{s}=\frac{A}{\sqrt{s}}
\]

as claimed.  Hence we can assume the length is larger than $2\sqrt{s}$.

Let $\{E^{i}\}$ be an orthonormal basis at $\sigma(0)$,  $\{E^{i}(\eta)\}$ the parallel translation along $\sigma$.  Let $\{Y^{i}(\eta)\}=h(\eta)E^{i}(\eta)$ for some piecewise smooth $h$ such that $h(0)=h(d)=0$.  Since $\sigma$ is minimizing we can take the second variation of the (Riemannian) energy in the direction $Y^{i}$ to get

\[
0\leq \delta^{2}_{Y,Y}E = \int_{\sigma}|\nabla_{\dot{\sigma}}Y^{i}|^{2} -<R(Y^{i},\dot{\sigma})\dot{\sigma},Y^{i}>
\]

\[
= \int_{\sigma}(h')^{2}-h^{2}<R(E^{i},\dot{\sigma})\dot{\sigma},E^{i}>
\].

Summing over $i$ yields

\[
0\leq (n-1)\int_{\sigma}(h')^{2} - \int_{\sigma} h^{2}Rc(\dot{\sigma}, \dot{\sigma})
\]

or

\[
\int_{\sigma}Rc(\dot{\sigma}, \dot{\sigma}) \leq (n-1)\int_{\sigma}(h')^{2} + \int_{\sigma} (1-h^{2})Rc(\dot{\sigma}, \dot{\sigma})
\]

Let $$h(\eta) = \left\{\begin{array}{lr}
\frac{\eta}{\sqrt{s}} &   0\leq \eta  \leq \sqrt{s}\\
1 &   \sqrt{s}\leq \eta  \leq d-\sqrt{s}\\
\frac{d-\eta}{\sqrt{s}} & d-\sqrt{s}\leq \eta\leq d
\end{array}\right.
 $$.  Then

\[
\int_{\sigma}Rc(\dot{\sigma}, \dot{\sigma}) \leq (n-1)(\frac{1}{s}2\sqrt{s})+ \int_{0}^{\sqrt{s}}(1-\frac{\eta^{2}}{s})Rc(\dot{\sigma}, \dot{\sigma}) + \int_{0}^{\sqrt{s}}(1-\frac{\eta^{2}}{s})Rc(\dot{\sigma}, \dot{\sigma})
\]

\[
\leq \frac{2(n-1)}{\sqrt{s}}+\frac{2\tilde{C}}{s}(\sqrt{s}-\frac{1}{3s}s^{3/2}) = \frac{A}{\sqrt{s}}
\]
\end{proof}

\begin{prop}
Let $(M,g(t))$ be a $(C,\kappa)$-controlled Ricci flow and $(x,T)\in M\times (-\infty,0)$.  Then there exists $A=A(n,C)$ such that $l_{x}^{\tau}(y)\geq \frac{1}{A}(1+\frac{d_{g(T-\tau)}(x,y)}{\sqrt{\tau}})^{2}-A$
\end{prop}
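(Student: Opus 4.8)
The plan is to bound $l_x^{\tau}(y)$ from below by running along a minimizing $\mathcal{L}$-geodesic and converting its $\mathcal{L}$-length into the genuine Riemannian distance $d_{g(T-\tau)}(x,y)$; the whole point is to perform this conversion with an additive error only, rather than with a multiplicative constant that blows up as the basepoint approaches the singular time. Fix a minimizing $\mathcal{L}$-geodesic $\tilde{\gamma}(\sigma)=(\gamma(\sigma),T-\sigma)$, $\sigma\in[0,\tau]$, from $(x,T)$ to $(y,T-\tau)$, with horizontal component $X$. As in the earlier estimates there is $\tilde{C}=\tilde{C}(n,C)$ with $|R|(\cdot,T-\sigma)\le \tilde{C}/|T-\sigma|\le \tilde{C}/\sigma$, so, using $\mathcal{L}[\tilde{\gamma}]=2\sqrt{\tau}\,l_x^{\tau}(y)$,
\[
\int_0^{\tau}\sqrt{\sigma}\,|X|^2_{g(T-\sigma)}\,d\sigma \;=\; 2\sqrt{\tau}\,l_x^{\tau}(y)-\int_0^{\tau}\sqrt{\sigma}\,R\,d\sigma \;\le\; 2\sqrt{\tau}\,\bigl(l_x^{\tau}(y)+\tilde{C}\bigr).
\]
(As the left-hand side is nonnegative this already re-proves $l_x^{\tau}(y)\ge -\tilde{C}$, consistent with Proposition~\ref{l_m}.)

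Next I would introduce $\rho(\sigma):=d_{g(T-\sigma)}\bigl(x,\gamma(\sigma)\bigr)$, which is continuous and locally Lipschitz on $(0,\tau]$ and satisfies $\rho(\sigma)\to 0$ as $\sigma\to 0^+$, since $\gamma(\sigma)\to x$ and $g(T-\sigma)\to g(T)$ smoothly. The crucial step is the differential inequality
\[
\frac{d^+}{d\sigma}\rho(\sigma)\;\le\;|X(\sigma)|_{g(T-\sigma)}\;+\;\frac{A}{\sqrt{|T-\sigma|}},
\]
obtained by estimating $\rho(\sigma')-\rho(\sigma)$ for $\sigma'\downarrow\sigma$ in two pieces: moving the endpoint along $\gamma$ costs at most $|X(\sigma)|_{g(T-\sigma)}(\sigma'-\sigma)+o(\sigma'-\sigma)$, and letting the metric evolve from $g(T-\sigma)$ to $g(T-\sigma')$ changes $d_{g(t)}(x,\gamma(\sigma))$, by the standard first variation of arclength under Ricci flow, at rate $\int_{\eta}Rc(\dot{\eta},\dot{\eta})$ along a minimizing $g(T-\sigma)$-geodesic $\eta$ from $x$ to $\gamma(\sigma)$, which by the previous lemma is at most $A/\sqrt{|T-\sigma|}$ — a bound that does not see the length of $\eta$. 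Integrating from $0$ to $\tau$ (legitimate because $\rho(0^+)=0$ and both terms on the right are integrable, with $\int_0^{\tau}|T-\sigma|^{-1/2}d\sigma\le 2\sqrt{\tau}$) yields
\[
d_{g(T-\tau)}(x,y)=\rho(\tau)\;\le\;\int_0^{\tau}|X(\sigma)|_{g(T-\sigma)}\,d\sigma\;+\;2A\sqrt{\tau}.
\]

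From here it is routine. Cauchy--Schwarz gives $\int_0^{\tau}|X|_{g(T-\sigma)}\,d\sigma\le\bigl(\int_0^{\tau}\sigma^{-1/2}d\sigma\bigr)^{1/2}\bigl(\int_0^{\tau}\sqrt{\sigma}\,|X|^2_{g(T-\sigma)}d\sigma\bigr)^{1/2}$, and feeding in the first display we obtain $d_{g(T-\tau)}(x,y)\le 2\sqrt{\tau}\sqrt{l_x^{\tau}(y)+\tilde{C}}+2A\sqrt{\tau}$, hence $d_{g(T-\tau)}(x,y)\le B\sqrt{\tau}\,\sqrt{l_x^{\tau}(y)+\bar{C}}$ for constants $B,\bar{C}$ depending only on $n,C$. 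Squaring, $\bigl(1+d_{g(T-\tau)}(x,y)/\sqrt{\tau}\bigr)^2\le 2\bigl(1+d_{g(T-\tau)}(x,y)^2/\tau\bigr)\le 2\bigl(1+B^2(l_x^{\tau}(y)+\bar{C})\bigr)$, and choosing $A$ large in terms of $B,\bar{C}$ (and recalling $l_x^{\tau}(y)\ge-\bar{C}$) rearranges this into $l_x^{\tau}(y)\ge \tfrac1A\bigl(1+d_{g(T-\tau)}(x,y)/\sqrt{\tau}\bigr)^2-A$, which is the claim.

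The main obstacle is the second step, and inside it the distance-distortion term. If one only used a pointwise curvature bound one would estimate $\int_{\eta}Rc$ by $|Rm|\cdot\mathrm{length}(\eta)$, and a Gronwall argument would then inflate $d_{g(T-\tau)}(x,y)$ by a factor of order $(|T-\tau|/|T|)^{c(n)C}$, which is unbounded as $|T|\to 0$, so the estimate would be useless for basepoints near the singular time. It is precisely the preceding lemma — a bound on $\int_{\eta}Rc$ that is uniform in the length of $\eta$ — that reduces the distortion to an additive $O(\sqrt{\tau})$ and makes the argument go through. One should also keep track of the fact that $|X(\sigma)|_{g(T-\sigma)}$ may be unbounded as $\sigma\to 0^+$, but only at the integrable rate $O(\sigma^{-1/2})$, so the integrations above cause no trouble.
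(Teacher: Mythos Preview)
Your argument is correct, and it differs from the paper's in an instructive way.  Both proofs hinge on the same lemma --- the length-independent bound $\int_{\eta}Rc(\dot\eta,\dot\eta)\le A/\sqrt{|T-\sigma|}$ along a minimizing geodesic --- to control the distortion of distances under the flow, and both arrive at an inequality of the form $d_{g(T-\tau)}(x,y)\le A\sqrt{\tau}\bigl(1+\sqrt{\,l_x^{\tau}(y)+\mathrm{const}\,}\bigr)$, which then inverts to the stated lower bound.

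The paper, however, does not track $d_{g(T-\sigma)}(x,\gamma(\sigma))$ with $x$ held fixed.  Instead it introduces a second minimizing $\mathcal{L}$-geodesic $\gamma_x$ from $(x,T)$ to $(x,T-\tau)$ and follows $h(s)=d_{g(T-s)}(\gamma_x(s),\gamma_y(s))$.  The forward derivative of $h$ then picks up the speeds $|\nabla l_x^{s}|$ at \emph{both} moving endpoints, and these are controlled via the already-proved gradient bound $|\nabla l_x^{s}|^2\le \tfrac{A}{s}(1+m+l_x^{s})$ together with an auxiliary inequality $l_x^{s}(\gamma(s))\le \tfrac{\sqrt{\tau}}{\sqrt{s}}\bigl(A+l_x^{\tau}(\gamma(\tau))\bigr)$ for $l$ along a minimizing $\mathcal{L}$-geodesic.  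Integrating the resulting ODE $\dot h(s)\lesssim \tau^{1/4}s^{-3/4}\sqrt{1+l_x^{\tau}(y)}+s^{-1/2}$ from $0$ to $\tau$ yields the same final estimate.

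Your route is more economical for this particular proposition: by keeping one endpoint fixed at $x$, you need only the horizontal speed $|X|$ of the single geodesic $\gamma_y$, and Cauchy--Schwarz against $\sigma^{-1/2}$ converts $\int_0^{\tau}|X|$ directly into $\bigl(\int_0^{\tau}\sqrt{\sigma}|X|^2\bigr)^{1/2}$, which is exactly the $\mathcal{L}$-energy.  You thereby bypass both the gradient estimate (and hence the $\mathcal{K}$-bound it rests on) and the bound on $l$ along the geodesic.  The paper's two-endpoint setup, on the other hand, is the natural template for comparing reduced distances from a common basepoint to two different targets, so it fits more smoothly into the broader framework even if it is slightly heavier here.
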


\begin{proof}
Let $\gamma_{x}(s)$, $\gamma_{y}(s)$ be minimizing $\mathcal{L}$-geodesics from $(x,T)$ to $(x,T-\tau)$,$(y,T-\tau)$, respectively.

Define $h(s)=d_{g(T-s)}(\gamma_{x}(s),\gamma_{x}(s))$, a locally lipschitz function.  Then in the sense of forward difference quotients we have

\[
\dot{h}(s) = <\nabla_{1}d_{s}(\gamma_{x}(s),\gamma_{x}(s)),\nabla l_{x}^{s}(\gamma_{x}(s))> + <\nabla_{2}d_{s}(\gamma_{x}(s),\gamma_{x}(s)),\nabla l_{x}^{s}(\gamma_{y}(s))> + \frac{\partial}{\partial s}d_{s}(\gamma_{x}(s),\gamma_{x}(s))
\]

\[
\leq |\nabla l_{x}^{s}|(\gamma_{x}(s)) + |\nabla l_{x}^{s}|(\gamma_{y}(s)) + \frac{\partial}{\partial s}\int_{\sigma}\sqrt{g_{s}(\dot{\sigma},\dot{\sigma})}
\]

where $\sigma$ is a unit speed minimizing geodesic in $(M,g(T-s))$ connecting $\gamma_{x}(s)$ to $\gamma_{y}(s)$ (note that $d_{s'}(\gamma_{x}(s'),\gamma_{x}(s'))\leq \int_{\sigma}\sqrt{g_{s'}(\dot{\sigma},\dot{\sigma})}$ for all $s'$ and equal at $s$.  Hence inequality holds in forward difference sense).  But by the last lemma

\[
\frac{\partial}{\partial s}\int_{\sigma}\sqrt{g_{s}(\dot{\sigma},\dot{\sigma})} = \int_{\sigma}Rc(\dot{\sigma},\dot{\sigma}) \leq \frac{A}{\sqrt{s}}
\].

So we have that

\[\dot{h}(s) \leq \sqrt{\frac{A}{s}(1+m+l_{x}^{s}(\gamma_{x}(s)))} + \sqrt{\frac{A}{s}(1+m+l_{x}^{s}(\gamma_{y}(s)))} + \frac{A}{\sqrt{s}}
\].

But if $\gamma$ is a minimizing $\mathcal{L}$-geodesic along $[0,\tau]$ then we have that

\[
l_{x}^{s}(\gamma(s))=\frac{\int_{0}^{s}\sqrt{\tau(R+|X|^{2})}}{2\sqrt{s}} \leq
\frac{\int_{0}^{\bar{\tau}}\sqrt{\tau(R+|X|^{2})} - \int_{s}^{\bar{\tau}}\sqrt{\tau}R}{2\sqrt{s}}
\]

\[
\leq \frac{\sqrt{\bar{\tau}}}{\sqrt{s}}l_{x}^{\tau}(\gamma(\tau)) + \frac{\tilde{C}(\sqrt{\tau}-\sqrt{s})}{\sqrt{s}} \leq \frac{\sqrt{\bar{\tau}}}{\sqrt{s}}(A+l_{x}^{\tau}(\gamma(\tau)))
\].

Hence after possibly increasing $A$ we have

\[
\dot{h}(s) \leq \frac{A\tau^{1/4}}{s^{3/4}}(1+\sqrt{1+m+l_{x}^{\tau}(y)}) +\frac{A}{\sqrt{s}}
\]

\[\Rightarrow d_{T-\tau}(x,y)=h(\tau) \leq A\sqrt{\tau}(1+\sqrt{(1+m+l_{x}^{\tau}(y))})\]
\end{proof}

We sum up by

\begin{propC}
Let $(M,g(t))$ be a $(C,\kappa)$-controlled Ricci flow and $(x,T)\in M\times (-\infty,0)$.  Then there exists $A=A(n,C)$ and $m=m(n,C)$ such that

$1)$ $|l_{x}^{\tau}(x)|\leq m$

$2)$ $\frac{1}{A}(1+\frac{d_{g(T-\tau)}(x,y)}{\sqrt{\tau}})^{2} - A \leq l_{x}^{\tau}(y) \leq A(1+\frac{d_{g(T-\tau)}(x,y)}{\sqrt{\tau}})^{2}$

$3)$ $|\nabla l_{x}^{\tau}|(y) \leq \frac{A}{\sqrt{\tau}} (1+\frac{d_{g(T-\tau)}(x,y)}{\sqrt{\tau}})$

$4)$ $|\frac{\partial l_{x}^{\tau}}{\partial\tau}|(y) \leq \frac{A}{\tau}(1+\frac{d_{g(T-\tau)}(x,y)}{\sqrt{\tau}})^{2}$
\end{propC}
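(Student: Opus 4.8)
The plan is to assemble the estimates already established in this section, since each item of the summary appears above in essentially the stated form. Item (1) is verbatim part (2) of Proposition~\ref{l_m}, with the same constant $m=m(n,C)$. The upper bound in item (2), together with all of items (3) and (4), are exactly the three conclusions of the proposition proved by the ODE comparison argument (the Claim $z(s)\le h(s)$, combined with Proposition A and the bound on $\mathcal{K}^{\tau}[\gamma_{y}]$); there $y$ already ranges over all of $M$, because $l^{\tau}_{x}$ is globally Lipschitz and the differential (in)equalities of Propositions A and B hold in the Lipschitz/distributional sense, so no additional argument is needed for those three bounds.

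Thus the only point requiring attention is the lower bound in item (2). I would derive it by inverting the distance-growth estimate
\[
d_{g(T-\tau)}(x,y)\le A\sqrt{\tau}\,\bigl(1+\sqrt{1+m+l^{\tau}_{x}(y)}\bigr)
\]
from the immediately preceding proposition. Rearranging gives $\tfrac{1}{A}\,\tfrac{d_{g(T-\tau)}(x,y)}{\sqrt{\tau}}-1\le\sqrt{1+m+l^{\tau}_{x}(y)}$. When the left-hand side is nonnegative, squaring and absorbing constants yields $l^{\tau}_{x}(y)\ge\tfrac{1}{A'}\bigl(1+\tfrac{d_{g(T-\tau)}(x,y)}{\sqrt{\tau}}\bigr)^{2}-A'$ for a suitably enlarged $A'=A'(n,C)$; when it is negative, $\bigl(1+\tfrac{d_{g(T-\tau)}(x,y)}{\sqrt{\tau}}\bigr)^{2}$ is bounded in terms of $A$ alone, and the crude bound $l^{\tau}_{x}(y)\ge-m$ from Proposition~\ref{l_m} gives the claim after a further enlargement of $A'$. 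Replacing all the constants that appear by their maximum then produces a single $A=A(n,C)$ valid for all four items.

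There is no genuine obstacle at this stage: the real content was in the earlier results---the bound on $\mathcal{K}^{\tau}[\gamma_{y}]$, the ODE comparison yielding the quadratic upper bound for $l^{\tau}_{x}$, and the second-variation estimate $\int_{\sigma}Rc(\dot{\sigma},\dot{\sigma})\le A/\sqrt{|T-\tau|}$ that feeds the distance-growth bound. The only care needed is bookkeeping of constants, making sure that after the rearrangement above everything still depends only on $n$ and $C$, which is handled by taking a maximum at the very end.
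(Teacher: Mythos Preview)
Your proposal is correct and mirrors the paper's own treatment exactly: Proposition~C is stated there as a summary (``We sum up by''), with items (1), the upper bound in (2), (3), and (4) read off directly from Proposition~\ref{l_m} and the ODE-comparison proposition, and the lower bound in (2) taken from the immediately preceding distance-growth proposition. Your explicit inversion of $d_{g(T-\tau)}(x,y)\le A\sqrt{\tau}\bigl(1+\sqrt{1+m+l^{\tau}_{x}(y)}\bigr)$ in fact spells out a step the paper leaves implicit (the statement of that proposition is already the rearranged form, but its proof ends at the unrearranged inequality), so if anything you are slightly more careful than the original.
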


As by Perelman we introduce

\begin{definition}
Let $A\subseteq M\times(-\infty,0)$ be a measurable subset.  Let $(x,T)\in M\times(-\infty,0)$ be fixed.  Define $A_{\tau}=A\cap(M\times\{T-\tau\})$.  We define the reduced volume of $A$ at $\tau$ by $\mathcal{V}_{A}(\tau)=\int_{A_{\tau}}\tau^{-n/2}e^{-l_{x}^{\tau}}dv_{g(T-\tau)}$.  If $A_{\tau}=M$ $\forall \tau$ we simply call $\mathcal{V}_{M}(\tau)$ the reduced volume.
\end{definition}

\begin{remark}
If it is important to distinguish the Ricci flow on $M$ we will write $\mathcal{V}_{(M,g(T-\tau))}(\tau)$.  An important property of the reduced volume is a scale invariance.  Let $c>0$ and note that $c^{-1}g(T+c t)$ is also a Ricci flow on $M$.  Then we observe that $\mathcal{V}_{(M,g(T-\tau))}(\tau) = \mathcal{V}_{(M,c^{-1}g(T-\tau/c))}(\tau/ c)$.
\end{remark}

\begin{lem}
Let $(M,g(t))$ be a $(C,\kappa)$-controlled Ricci flow and $(x,T)\in M\times (-\infty,0)$.  Then $(\frac{\partial l_{x}^{\tau}}{\partial\tau}+\frac{n}{2\tau}-R)e^{-l_{x}^{\tau}}\in L^{1}(M,dv_{g(T-\tau)})$ and $\int_{M}\tau^{-n/2}e^{-l_{x}^{\tau}}(\frac{\partial l_{x}^{\tau}}{\partial\tau}+\frac{n}{2\tau}-R)\geq 0$
\end{lem}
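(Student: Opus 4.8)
The plan is to combine the pointwise identities of Proposition A with the decay estimates of Proposition C, and then pass a distributional differential inequality against the weight $\tau^{-n/2}e^{-l_x^\tau}$ by a cutoff-and-limit argument that uses Gaussian decay to kill the term at infinity. Throughout write $l=l_x^\tau$, let $d=d_{g(T-\tau)}(x,\cdot)$, and recall that $l$ is locally Lipschitz, smooth on the open dense set $U_x^\tau$, whose complement is null by the standard $\mathcal{L}$-geodesic construction. The first step is purely algebraic: on $U_x^\tau$, eliminating the Harnack term $\mathcal{K}^\tau[\tilde\gamma_q]$ between parts $(1)$ and $(2)$ of Proposition A gives the identity $\frac{\partial l}{\partial\tau}+\frac{n}{2\tau}-R=\frac{n-l}{2\tau}-\frac{R}{2}-\frac{|\nabla l|^2}{2}$, and this is exactly the statement that $\frac{\partial l}{\partial\tau}+\frac{n}{2\tau}-R=(b1)+(\triangle l-|\nabla l|^2)$ in the notation of Proposition B, where $(b1)\geq 0$.

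Next I would establish the $L^1$ claim. By Proposition C, $l\geq \frac1A(1+d/\sqrt\tau)^2-A$, so $e^{-l}$ has Gaussian decay in $d$, while $l$ and $|\nabla l|^2$ grow at most quadratically in $d$ and $|R|\leq \tilde C/\tau$ is bounded on the slice $M\times\{T-\tau\}$. Since $Rc\geq-(n-1)\frac{C}{\tau}g$ there, Bishop–Gromov gives $Vol(B_{g(T-\tau)}(x,r))\lesssim e^{Cr}$, which the Gaussian factor dominates. Hence each of $|l|e^{-l}$, $|\nabla l|^2e^{-l}$, $|R|e^{-l}$ lies in $L^1(M,dv_{g(T-\tau)})$, and by the identity above so does $\big(\frac{\partial l}{\partial\tau}+\frac{n}{2\tau}-R\big)e^{-l}$, which is the first assertion.

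For the inequality, fix $r>0$ and a Lipschitz cutoff $\phi_r$ equal to $1$ on $B_{g(T-\tau)}(x,r)$, supported in $B_{g(T-\tau)}(x,2r)$, with $|\nabla\phi_r|\leq 2/r$. Since $(b1)=\frac{\partial l}{\partial\tau}-\triangle l+|\nabla l|^2-R+\frac{n}{2\tau}\geq 0$ holds globally in the distributional sense, I would pair it with the nonnegative compactly supported function $\psi_r:=\tau^{-n/2}e^{-l}\phi_r$ (justifying the integration by parts $\langle\triangle l,\psi_r\rangle=-\int_M\langle\nabla l,\nabla\psi_r\rangle\,dv_{g(T-\tau)}$ by a routine mollification of $\psi_r$, using that $\nabla l\in L^\infty_{\mathrm{loc}}$). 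Expanding $\nabla\psi_r$, the two occurrences of $|\nabla l|^2$ cancel and one is left with
\[
0\;\leq\;\tau^{-n/2}\!\int_M\!\Big(\tfrac{\partial l}{\partial\tau}+\tfrac{n}{2\tau}-R\Big)e^{-l}\phi_r\,dv_{g(T-\tau)}\;+\;\tau^{-n/2}\!\int_M\! e^{-l}\langle\nabla l,\nabla\phi_r\rangle\,dv_{g(T-\tau)}.
\]
The error term is bounded by $\frac{C}{r}\int_{B_{2r}\setminus B_r}e^{-l}|\nabla l|\,dv_{g(T-\tau)}$, and by Proposition C together with the volume bound above this is $\lesssim \frac{C}{r}\cdot\frac{r}{\tau}e^{-r^2/(A\tau)}e^{Cr}\to 0$ as $r\to\infty$; the first term tends to $\tau^{-n/2}\int_M\big(\frac{\partial l}{\partial\tau}+\frac{n}{2\tau}-R\big)e^{-l}\,dv_{g(T-\tau)}$ by dominated convergence, using the $L^1$ bound already proved. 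Letting $r\to\infty$ gives $\int_M\tau^{-n/2}e^{-l}\big(\frac{\partial l}{\partial\tau}+\frac{n}{2\tau}-R\big)\geq 0$.

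The main obstacle is the last step: legitimately integrating the distributional inequality $(b1)\geq 0$ against the non-compactly-supported, merely Lipschitz weight $\tau^{-n/2}e^{-l}$ on a noncompact manifold where $\triangle l$ is only a measure. This is precisely where the Gaussian decay of Proposition C must be played against the at-most-exponential volume growth of the time slice so that the boundary term vanishes; the pointwise computation and the integrability count are routine once Propositions A and C are in hand.
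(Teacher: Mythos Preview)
Your proposal is correct and follows essentially the same route as the paper: use the growth estimates of Proposition C together with at-most-exponential volume growth to get the $L^1$ bound, then test the distributional inequality $(b1)\geq 0$ against $\tau^{-n/2}e^{-l}\phi_r$, observe the $|\nabla l|^2$ cancellation, and let $r\to\infty$ using Gaussian decay to kill the cross term $\int e^{-l}\langle\nabla l,\nabla\phi_r\rangle$. The only cosmetic difference is that for integrability the paper invokes Proposition C(4) directly to bound $|\partial_\tau l|$, whereas you first rewrite $\partial_\tau l+\tfrac{n}{2\tau}-R$ via the Proposition A identity; both lead to the same quadratic-growth-versus-Gaussian-decay count.
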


\begin{proof}
We have quadratic bounds on $|\frac{\partial l_{x}^{\tau}}{\partial\tau}|$ as well
as quadratic lower bounds on $l_{x}^{\tau}$. Since the curvature is bounded the volume
form grows at most exponentially in normal coordinates and hence  $(\frac{\partial l_{x}^{\tau}}{\partial\tau}+\frac{n}{2\tau}-R)e^{-l_{x}^{\tau}}$ is integrable.

For $\forall \phi\in C^{\infty}_{c}(M)$, $\phi\geq 0$ we have by proposition (B)

\[
\int_{M}\phi(\frac{\partial l_{x}^{\tau}}{\partial\tau}+|\nabla l_{x}^{\tau}|^{2}+\frac{n}{2\tau}-R)-l_{x}^{\tau}\triangle\phi dv_{g} \geq 0
\].

Since $l_{x}^{\tau}$ is Lipschitz we may write this as

\[
\int_{M}\phi(\frac{\partial l_{x}^{\tau}}{\partial\tau}+|\nabla l_{x}^{\tau}|^{2}+\frac{n}{2\tau}-R)+<\nabla l_{x}^{\tau},\nabla\phi> dv_{g} \geq 0
\].

By limiting we see this inequality holds for all $\phi\in C^{0,1}_{c}$, $\phi\geq 0$.  Let $\phi_{r}$ be a smooth cutoff with $\phi_{r}=1$ in $B_{r}(x)$, $\phi_{r}=0$ outside $B_{2r}(x)$ and $|\phi_{r}|\leq \frac{3}{r}$.

Let $\phi=\tau^{-n/2}e^{-l_{x}^{\tau}}\phi_{r}$.  Then we see $\forall r>0$ that

\[
\int_{M}\tau^{-n/2}e^{-l_{x}^{\tau}}[\phi_{r}(\frac{\partial l_{x}^{\tau}}{\partial\tau}+|\nabla l_{x}^{\tau}|^{2}+\frac{n}{2\tau}-R)+<\nabla l_{x}^{\tau},\nabla\phi_{r}>-\phi_{r}|\nabla l_{x}^{\tau}|^{2}] \geq 0
\].

\[
\Rightarrow \int_{M}\tau^{-n/2}e^{-l_{x}^{\tau}}[\phi_{r}(\frac{\partial l_{x}^{\tau}}{\partial\tau}+\frac{n}{2\tau}-R)+<\nabla l_{x}^{\tau},\nabla\phi_{r}>] \geq 0
\]

Again by our estimates we see $e^{-l_{x}^{\tau}}|\nabla l_{x}^{\tau}|$, $(\frac{\partial l_{x}^{\tau}}{\partial\tau}+\frac{n}{2\tau}-R)e^{-l_{x}^{\tau}}\in L^{1}(M,dv_{g(T-\tau)})$. Hence we can limit out $r$ to get our result.
\end{proof}

\begin{remark}\label{dist_ncom}
A useful corollary of the above proof is that the distribution $\mathcal{D}[\phi] = \int_{M}\phi(\frac{\partial l_{x}^{\tau}}{\partial\tau}+|\nabla l_{x}^{\tau}|^{2}+\frac{n}{2\tau}-R)+<\nabla l_{x}^{\tau},\nabla\phi> dv_{g}$ extends to a nonnegative continuous linear functional on the space noncompactly supported functions of the form $\phi = \psi e^{-l_{x}^{\tau}}$, where $\psi$ is a smooth function with uniform bounds on $|\psi|$ and $|\nabla\psi|$.  Extended in this form a proof identical to the one above shows that $\int(\triangle l_{x}^{\tau}-|\nabla l_{x}^{\tau}|^{2}) e^{-l_{x}^{\tau}} = 0$.
\end{remark}

\begin{prop}
Let $(M,g(t))$ be a $(C,\kappa)$-controlled Ricci flow.  Then $\frac{d}{d\tau}\mathcal{V}_{M}(\tau)\leq 0$ and $\mathcal{V}_{M}(\tau)\leq (4\pi)^{n/2}$
\end{prop}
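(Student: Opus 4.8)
The plan is to differentiate $\mathcal{V}_M(\tau)$ under the integral sign, recognize the resulting integrand as the one already shown to be nonnegative in the preceding lemma, and then pin down the numerical constant by letting $\tau\to 0^+$. Along the Ricci flow $\frac{\partial}{\partial\tau}dv_{g(T-\tau)}=R\,dv_{g(T-\tau)}$, so differentiating the integrand $\tau^{-n/2}e^{-l_x^\tau}\,dv_{g(T-\tau)}$ gives, formally,
\[
\frac{d}{d\tau}\mathcal{V}_M(\tau)=\int_M\tau^{-n/2}e^{-l_x^\tau}\Big(-\frac{n}{2\tau}-\frac{\partial l_x^\tau}{\partial\tau}+R\Big)\,dv_{g(T-\tau)}=-\int_M\tau^{-n/2}e^{-l_x^\tau}\Big(\frac{\partial l_x^\tau}{\partial\tau}+\frac{n}{2\tau}-R\Big)\,dv_{g(T-\tau)}.
\]
To justify interchanging $\frac{d}{d\tau}$ and $\int_M$ on the noncompact manifold I would use Proposition C: the quadratic lower bound $l_x^\tau\ge\frac{1}{A}(1+d_{g(T-\tau)}(x,\cdot)/\sqrt\tau)^2-A$ together with the at most exponential growth of $dv_{g(T-\tau)}$ in $g(T-\tau)$-normal coordinates (bounded curvature) shows $\tau^{-n/2}e^{-l_x^\tau}$ is integrable, and the quadratic bounds on $|\partial_\tau l_x^\tau|$ and on $|R|$ bound $\tau^{-n/2}e^{-l_x^\tau}(\partial_\tau l_x^\tau+\frac{n}{2\tau}-R)$ pointwise by a fixed integrable function, uniformly for $\tau$ in any compact subinterval of $(0,\infty)$; dominated convergence then validates the displayed identity. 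The preceding lemma says precisely that the last integral is $\ge 0$, so $\frac{d}{d\tau}\mathcal{V}_M(\tau)\le 0$.

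It remains to show $\lim_{\tau\to 0^+}\mathcal{V}_M(\tau)=(4\pi)^{n/2}$; combined with monotonicity this yields $\mathcal{V}_M(\tau)\le(4\pi)^{n/2}$ for all $\tau>0$. The lower bound of Proposition C(2) shows that for small $\tau$ all but an exponentially small fraction of the mass of $\tau^{-n/2}e^{-l_x^\tau}\,dv_{g(T-\tau)}$ sits in a $g(T-\tau)$-ball about $x$ of radius $O(\sqrt{\tau\log(1/\tau)})$, so no mass escapes to infinity and the limit reduces to a local computation near $(x,T)$, where the metrics $g(T-\tau)$ converge smoothly to $g(T)$. On that small scale one has the standard near-diagonal asymptotics of the reduced length, $4\tau\,l_x^\tau(y)\to d_{g(T)}(x,y)^2$, coming from the structure of short $\mathcal{L}$-geodesics and the $\mathcal{L}$-Jacobian comparison degenerating to the Euclidean one as $\tau\to 0$ (see \cite{MT},\cite{P}); after rescaling $g(T)$-normal coordinates by $\sqrt\tau$, the measure $\tau^{-n/2}e^{-l_x^\tau}\,dv_{g(T-\tau)}$ converges to $e^{-|z|^2/4}\,dz$ on $\mathds{R}^n$, whose total mass is $\int_{\mathds{R}^n}e^{-|z|^2/4}\,dz=(4\pi)^{n/2}$.

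The delicate points are the two justifications rather than any single computation. First, unlike the compact case, differentiating $\mathcal{V}_M$ past the integral is not automatic and relies entirely on the global quadratic estimates of Proposition C, so the \emph{a priori} growth control of $l_x^\tau$ and its $\tau$-derivative is what makes the argument go through. Second, and more subtly, evaluating the $\tau\to 0^+$ limit requires simultaneously the concentration estimate (to prevent loss of mass at infinity, again Proposition C) and the sharp small-time behavior of $l_x^\tau$, which is where the local smooth convergence of $g(T-\tau)$ and the fine theory of $\mathcal{L}$-geodesics enter; this is the step I expect to be the main obstacle to make fully rigorous.
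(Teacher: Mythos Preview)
Your proof is correct and follows essentially the same approach as the paper. The monotonicity step is identical: differentiate under the integral using the growth estimates of Proposition C to justify dominated convergence, then invoke the preceding lemma. For the upper bound, the paper frames the $\tau\to 0$ limit as a pointed Cheeger--Gromov limit of the parabolically rescaled flows $(M,\tau_i^{-1}g(T+\tau_i t),(x,-1))\to(\mathds{R}^n,g_0,(0,-1))$ together with scale invariance of the reduced volume, whereas you phrase the same rescaling as a direct change of variables in $g(T)$-normal coordinates plus the small-time asymptotic $4\tau\,l_x^\tau\to d_{g(T)}^2$; these are the same computation in different language, and both rely on the decay estimates of Proposition C to prevent loss of mass at infinity.
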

\begin{proof}
Because $l_{x}^{\tau}$ is Lipschitz and the below quantities are $L^{1}$ we can write

\[
\frac{d}{d\tau}\mathcal{V}_{M}(\tau) = \frac{\partial}{\partial\tau}(\int_{M}\tau^{n/2}e^{-l_{x}^{\tau}}dv_{g(T-\tau)})
\]

\[
= - \int_{M}(\frac{\partial l_{x}^{\tau}}{\partial\tau}+\frac{n}{2\tau}-R)\tau^{n/2}e^{-l_{x}^{\tau}}dv_{g(T-\tau)} \leq 0
\]

To see $\mathcal{V}_{M}(\tau)\leq (4\pi)^{n/2}$ let $\tau_{i}\rightarrow 0$ with $g_{i}(t)=\tau_{i}^{-1}g(T+\tau_{i}t)$ and $l_{i}^{\tau}=l_{x}^{\tau/\tau_{i}}$ the reduced length with respect to the rescaled Ricci flow.  As a Ricci flow we see $(M,g_{i}(t)),(x,-1))\rightarrow (\mathds{R}^{n},g_{0}(t),(0,-1))$ with $l_{i}^{\tau}\rightarrow \frac{1}{2\sqrt{\tau}}|x|^{2}$ becoming quadratic.  Because of our decay estimates on $l_{x}^{\tau}$ and our uniform curvature bounds we see that $\mathcal{V}_{(M,g_{i})}(-1)$ is converging to the reduced volume $\mathcal{V}_{(\mathds{R}^{n},g_{0})}(-1)$, which by a computation we can see is $(4\pi)^{n/2}$.  Since $\mathcal{V}_{M}$ is monotone  and $\mathcal{V}_{(M,g_{i})}(-1) = \mathcal{V}_{(M,g)}(\tau_{i})$ we have our result.
\end{proof}

In our analysis it will not quite be enough to study the reduced length functions of points in our space-time.  It will be useful as a tool to have a function which behaves as a reduced length function but is globally defined on $M\times(-\infty,0)$.  Intuitively we will construct a reduced length function from a singular point on the boundary of our space time:

\begin{prop}
Let $(M,g(t))$ be a $(C,\kappa)$-controlled Ricci flow, $x\in M$ and $T_{i}\rightarrow 0 \in (-\infty,0)$.  Let $l_{i}^{\tau}=l_{(x,T_{i})}^{\tau}$ the reduced length functions to $(x,T_{i})$.  Then after possibly passing to a subsequence $\exists$ $\bar{l}^{\tau}\in C^{0,1}(M\times(-\infty,0))$ such that $l_{i}^{\tau}\rightarrow \bar{l}^{\tau}$ in $C^{0,\alpha}_{loc}$ and weakly in $W^{1,2}_{loc}$ with Propositions (B) and (C) holding for $\bar{l}^{\tau}$.
\end{prop}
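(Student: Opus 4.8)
The plan is to realize each $l_i$ as a function on the space-time region $M\times(-\infty,T_i)$ — the point $(y,T_i-\tau)$ receiving the value $l_i^{\tau}(y)$ — and to extract $\bar l$ by a compactness argument driven by Propositions (B) and (C), whose constants depend only on $n$ and $C$. Since $T_i\to 0$, the regions $M\times(-\infty,T_i)$ exhaust $M\times(-\infty,0)$, so for every compact $K\subset M\times(-\infty,0)$ the $l_i$ are defined on $K$ once $i$ is large, and on $K$ the parameter $\tau=T_i-t$ ranges over a fixed compact subinterval of $(0,\infty)$ while $d_{g(t)}(x,\cdot)$ stays bounded. Hence Proposition (C) bounds $|l_i|$, $|\nabla l_i|$ and $|\partial_\tau l_i|$ on $K$ uniformly in $i$; in particular the $l_i$ are uniformly bounded and uniformly Lipschitz on compact sets of space-time. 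After passing to a subsequence, Arzel\`a--Ascoli gives $l_i\to\bar l$ in $C^{0,\alpha}_{loc}$ with $\bar l\in C^{0,1}_{loc}(M\times(-\infty,0))$, and a further diagonal subsequence makes $\nabla l_i$ and $\partial_\tau l_i$ converge weakly in $L^{2}_{loc}$, necessarily to $\nabla\bar l$ and $\partial_\tau\bar l$; so $l_i\rightharpoonup\bar l$ weakly in $W^{1,2}_{loc}$. The pointwise estimates of Proposition (C) transfer to $\bar l$: the sup-type bound $|\bar l(x,\cdot)|\le m$ and the quadratic lower bound by uniform convergence, and the bounds on $|\nabla\bar l|$ and $|\partial_\tau\bar l|$ because the a.e.\ pointwise inequalities for $l_i$ (whose right-hand sides are continuous in $(y,t)$ and independent of $i$) persist under weak $L^{2}_{loc}$ limits. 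Here the role of $\tau$ in $\bar l^{\tau}$ is played by $-t$, the distance to the singular time $0$.

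The one genuine difficulty is passing Proposition (B) to the limit, because of the quadratic term $|\nabla l_i|^{2}$: weak $W^{1,2}_{loc}$ convergence does not suffice, and the claim I must prove is that $\nabla l_i\to\nabla\bar l$ \emph{strongly} in $L^{2}_{loc}$, i.e.\ that the gradients do not concentrate. The key ingredient is the one-sided Laplacian bound from Proposition A(3) (equivalently from (b1)): together with the bound $|\mathcal{K}^{\tau}|\le A\sqrt{\tau}(1+m+l_i)$ of the lemma above it gives $\triangle l_i\le H_i$ distributionally, with $H_i$ uniformly bounded in $L^{\infty}_{loc}$. Consequently $H_i\,dv_{g(t)}-\triangle l_i\ge 0$ is a nonnegative measure, and since for a.e.\ ball $B$ one has $\bigl|\int_B d(\triangle l_i)\bigr|=\bigl|\int_{\partial B}\langle\nabla l_i,\nu\rangle\bigr|\le \|\nabla l_i\|_{L^{\infty}(B)}\,|\partial B|$ by the divergence theorem for the bounded field $\nabla l_i$, the Laplacians $\triangle l_i$ — and likewise the limit $\triangle\bar l$ — are measures of locally uniformly bounded total variation.

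Now set $e_i:=l_i-\bar l$, so $e_i\to 0$ uniformly on compacta, $\{\nabla e_i\}$ is bounded in $L^{2}_{loc}$, and $\triangle e_i=\triangle l_i-\triangle\bar l$ is a measure of locally uniformly bounded mass. For $\psi\in C^{\infty}_{c}$, $\psi\ge 0$, the identity $\langle\triangle e_i,\psi e_i\rangle=-\int\langle\nabla e_i,\nabla(\psi e_i)\rangle\,dv$ holds (true for smooth compactly supported test functions by definition of the distributional Laplacian, and extended to the Lipschitz compactly supported function $\psi e_i$ by mollification). The left side is at most $\sup|\psi e_i|$ times the mass of $\triangle e_i$ on $\operatorname{supp}\psi$, hence $\to 0$; expanding the right side as $-\int\psi|\nabla e_i|^{2}-\int e_i\langle\nabla e_i,\nabla\psi\rangle$, the last term is $\le\sup_{\operatorname{supp}\psi}|e_i|\cdot\|\nabla e_i\|_{L^{2}}\|\nabla\psi\|_{L^{2}}\to 0$. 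Therefore $\int\psi|\nabla e_i|^{2}\to 0$ for every such $\psi$, i.e.\ $\nabla l_i\to\nabla\bar l$ in $L^{2}_{loc}$ (and, along a further subsequence, a.e.).

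With strong $L^{2}_{loc}$ convergence of the gradients, every term in the distributional statements (b1) and (b2) for $l_i$ converges: $|\nabla l_i|^{2}\to|\nabla\bar l|^{2}$ in $L^{1}_{loc}$, $\partial_\tau l_i\rightharpoonup\partial_\tau\bar l$ and $\triangle l_i\to\triangle\bar l$ as distributions, $l_i\to\bar l$ uniformly, the term $R$ is fixed, and $1/\tau=1/(T_i-t)\to 1/(-t)$ uniformly on compacta. Hence (b1)$\,\ge 0$ and (b2)$\,\le 0$ hold for $\bar l$ with $\tau=-t$; moreover the first-order identity $2\partial_\tau l_i+|\nabla l_i|^{2}-R+l_i/\tau=0$ (obtained by combining Proposition A(1) and A(2), and equivalent to the relation of Proposition (B) between (b1) and (b2), the second-order terms canceling) passes to the limit by the same token, so that relation holds for $\bar l$ as well. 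Thus Proposition (B) holds for $\bar l$, and combined with the estimates already transferred, so does Proposition (C). The main obstacle throughout is exactly ruling out concentration of the gradients in the limit, and it is the one-sided Laplacian bound of Proposition A(3)/(b1) that makes this possible.
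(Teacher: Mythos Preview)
Your proof is correct and follows the same approach as the paper. The paper's own proof is very terse: it notes that Proposition~(C) gives uniform $C^{0,1}$ bounds independent of $T_i$, extracts the limit, and then for Proposition~(B) says only that it suffices to show $\int\phi|\nabla l_i|^2\to\int\phi|\nabla\bar l|^2$ and refers to \cite{MT} for this. You have written out precisely that argument---the one-sided Laplacian bound from Proposition~A(3) forces $\triangle l_i$ to have locally uniformly bounded mass, and pairing $\triangle e_i$ against $\psi e_i$ then kills the possible defect in the gradient energy---which is exactly the mechanism in \cite{MT}.
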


\begin{proof}
We note that Proposition (C) does not depend on $T_{i}$.  Hence uniform $C^{0,1}$ bounds on $l_{i}^{\tau}$ on compact subsets of $M\times(-\infty,T_{i})$ imply the existence of $\bar{l}^{\tau}\in C^{0,1}(M\times(-\infty,0))$ such that $l_{i}^{\tau}\rightarrow \bar{l}^{\tau}$ in $C^{0,\alpha}_{loc}$ and weakly in $W^{1,2}_{loc}$.  That Proposition (C) holds for $\bar{l}^{\tau}$ is immediate, we just need to check (B).  To check (B) it is enough to check that $\forall \phi\in C^{\infty}_{c}$ that $\int\phi|\nabla \bar{l}^{\tau}|^{2} = lim \int\phi|\nabla l^{\tau}_{i}|^{2}$, since the other terms in (B) clearly converge in the required way.  The proof for this is as in \cite{MT}.
\end{proof}

\begin{definition}
We call $\bar{l}^{\tau}$ from the last proposition a singular reduced length function from $(x,0)$.  The last proposition implies that we can define a singular reduced volume $\mathcal{\bar{V}}_{M}(\tau) =\int_{M}\tau^{-n/2}e^{-\bar{l}^{\tau}}dv_{g}$.
\end{definition}

\begin{lem}
$\frac{d}{d\tau}\mathcal{\bar{V}}_{M}(\tau)\leq 0$ and $\mathcal{\bar{V}}_{M}(\tau)\leq (4\pi)^{n/2}$
\end{lem}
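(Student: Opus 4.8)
My plan is to realize $\bar{\mathcal{V}}_{M}(\tau)$ as the pointwise limit in $\tau$ of the ordinary reduced volumes based at the points $(x,T_{i})$, and then read off both assertions from the facts already established for honest reduced volumes. Write $\mathcal{V}_{i}(\tau)=\int_{M}\tau^{-n/2}e^{-l_{i}^{\tau}}\,dv_{g(T_{i}-\tau)}$ for the reduced volume of $(M,g(t))$ based at $(x,T_{i})$. Applying the earlier proposition on monotonicity of the reduced volume with basepoint $(x,T_{i})$, each $\mathcal{V}_{i}$ is nonincreasing in $\tau$ and satisfies $\mathcal{V}_{i}(\tau)\le(4\pi)^{n/2}$. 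So it suffices to show that for every fixed $\tau>0$ one has $\mathcal{V}_{i}(\tau)\to\bar{\mathcal{V}}_{M}(\tau)$ as $i\to\infty$: granting this, the bound $\bar{\mathcal{V}}_{M}(\tau)\le(4\pi)^{n/2}$ is immediate and $\bar{\mathcal{V}}_{M}$ is nonincreasing since a pointwise limit of nonincreasing functions is nonincreasing. As the quadratic bound on $|\partial_{\tau}\bar{l}^{\tau}|$ from Proposition (C), together with the curvature bound, makes $\bar{\mathcal{V}}_{M}$ locally Lipschitz in $\tau$, this monotonicity is exactly the statement $\frac{d}{d\tau}\bar{\mathcal{V}}_{M}(\tau)\le0$.

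To prove $\mathcal{V}_{i}(\tau)\to\bar{\mathcal{V}}_{M}(\tau)$ I would argue by dominated convergence. Fix $\tau>0$. Since $T_{i}\to0$, the metrics $g(T_{i}-\tau)$ converge smoothly on compact sets to $g(-\tau)$, and by the proposition producing $\bar{l}^{\tau}$ one has $l_{i}^{\tau}\to\bar{l}^{\tau}$ locally uniformly; hence the integrands $\tau^{-n/2}e^{-l_{i}^{\tau}}$ converge pointwise to $\tau^{-n/2}e^{-\bar{l}^{\tau}}$ and the densities $dv_{g(T_{i}-\tau)}$ converge to $dv_{g(-\tau)}$. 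For the majorant, observe that on a fixed compact time interval about $-\tau$ the bound $|\partial_{t}g|=2|Rc|\le\tilde{C}$ yields a uniform two-sided comparison $\Lambda^{-1}g(-\tau)\le g(s)\le\Lambda g(-\tau)$ with $\Lambda=\Lambda(n,C,\tau)$, so that for all large $i$ the distance $d_{g(T_{i}-\tau)}(x,\cdot)$ is comparable to $\rho:=d_{g(-\tau)}(x,\cdot)$ and $dv_{g(T_{i}-\tau)}\le\Lambda^{n/2}dv_{g(-\tau)}$. Feeding the $i$-uniform lower bound $l_{i}^{\tau}\ge\frac{1}{A}\bigl(1+d_{g(T_{i}-\tau)}(x,\cdot)/\sqrt{\tau}\bigr)^{2}-A$ of Proposition (C) into this gives $\tau^{-n/2}e^{-l_{i}^{\tau}}\,dv_{g(T_{i}-\tau)}\le C'\tau^{-n/2}e^{-c'\rho^{2}/\tau}\,dv_{g(-\tau)}$ with $C',c'$ depending only on $n,C,\tau$, and this majorant is $dv_{g(-\tau)}$-integrable because $(M,g(-\tau))$ has bounded curvature and hence at most exponential volume growth in $\rho$. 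Dominated convergence then gives $\mathcal{V}_{i}(\tau)\to\int_{M}\tau^{-n/2}e^{-\bar{l}^{\tau}}\,dv_{g(-\tau)}=\bar{\mathcal{V}}_{M}(\tau)$.

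The step I expect to be the main obstacle is exactly this domination: producing one $dv_{g(-\tau)}$-integrable majorant valid for all $i$ needs both the $i$-uniform quadratic lower bound on $l_{i}^{\tau}$ (this is why Proposition (C) was arranged to be independent of the basepoint time) and the $i$-uniform equivalence of the metrics $\{g(T_{i}-\tau)\}_{i}$ near the time slice $-\tau$, which comes from $\partial_{t}g=-2Rc$ and the curvature bound. As an alternative derivation of the monotonicity that bypasses the convergence: since Propositions (B) and (C) hold for $\bar{l}^{\tau}$, one may repeat verbatim the proof of the earlier lemma to get $\int_{M}\tau^{-n/2}e^{-\bar{l}^{\tau}}\bigl(\partial_{\tau}\bar{l}^{\tau}+\tfrac{n}{2\tau}-R\bigr)\ge0$ (that argument used only (B), (C) and the quadratic estimates to integrate by parts against the cutoffs $\phi_{r}$), and then, justifying differentiation under the integral by those same estimates, compute $\frac{d}{d\tau}\bar{\mathcal{V}}_{M}(\tau)=-\int_{M}\bigl(\partial_{\tau}\bar{l}^{\tau}+\tfrac{n}{2\tau}-R\bigr)\tau^{-n/2}e^{-\bar{l}^{\tau}}\,dv_{g(-\tau)}\le0$, exactly as in the proposition for $\mathcal{V}_{M}$; the upper bound would still come from the convergence argument above.
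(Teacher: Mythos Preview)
Your proposal is correct and overlaps substantially with the paper's argument. The paper proves the upper bound exactly as you do, by the convergence $\mathcal{V}_{i}(\tau)\to\bar{\mathcal{V}}_{M}(\tau)$ coming from the growth estimates of Proposition~(C) and the uniform curvature bounds; your dominated-convergence justification is a more explicit version of what the paper states in one line.

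For monotonicity the paper takes your \emph{alternative} route rather than your primary one: since Propositions~(B) and~(C) hold for $\bar{l}^{\tau}$, it simply repeats the earlier differential-inequality computation to get $\frac{d}{d\tau}\bar{\mathcal{V}}_{M}(\tau)=-\int_{M}(\partial_{\tau}\bar{l}^{\tau}+\tfrac{n}{2\tau}-R)\tau^{-n/2}e^{-\bar{l}^{\tau}}\le 0$. Your primary route---inferring monotonicity from the pointwise limit of the nonincreasing functions $\mathcal{V}_{i}$---is a slight economy, since once you have established the convergence for the upper bound you get monotonicity for free without reopening the integration-by-parts argument. On the other hand, the paper's route yields the differential inequality itself (not just monotonicity), which is what is actually used later in Theorem~\ref{asym_sol} to detect the soliton structure when $\bar{\mathcal{V}}_{M}$ is constant.
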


\begin{proof}

Because (B) and (C) hold for $\bar{l}^{\tau}$ the proof that $\mathcal{\bar{V}}_{M}$ is monotone is the same as before.  To see the upper bound still holds we note that by the growth estimates of (C) and our uniform curvature bounds we have that for each fixed $\tau$ that $\int_{M}\tau^{-n/2}e^{-l_{i}^{\tau}}dv_{g}\rightarrow \int_{M}\tau^{-n/2}e^{-\bar{l}^{\tau}}dv_{g}$, and hence the upper bounds of the reduced volumes for $l^{\tau}_{i}$ imply upper bounds for the reduced volumes of $\bar{l}^{\tau}$.

\end{proof}

We will be exploiting the following theorem later:

\begin{thm}\label{asym_sol}
Let $(M,g(t))$ be a $(C,\kappa)$-controlled Ricci flow,  $x\in M$.  Let $\tau^{-}_{i}\rightarrow 0$ and $\tau^{+}_{i} \rightarrow \infty$ with $g^{\pm}_{i}(t)=(\tau^{\pm}_{i})^{-1}g(\tau^{\pm}_{i}t)$.  Then, after possibly passing to subsequences, $(M,g^{\pm}_{i}(t),(x,-1)) \rightarrow (S^{\pm},h^{\pm}(t),(x^{\pm},-1))$ where $(S^{\pm},h^{\pm}(t),x^{\pm})$ are $(C,\kappa)$-controlled shrinking solitons which are normalized at $t=-1$.  Further, if $S^{\pm}$ are isometric then $(M,g(t))$ is also isometric to $S^{\pm}$, and hence is a shrinking soliton.
\end{thm}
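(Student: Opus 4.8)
\emph{The plan} is to run Perelman's asymptotic--soliton construction on the flow $(M,g(t))$, but using the scale-invariant estimates of Proposition C and the singular reduced length function $\bar l^{\tau}$ from $(x,0)$ in place of the reduced length from an interior spacetime point. Since $|Rm[g(t)]|\le C/|t|$ and $\kappa$-noncollapsedness are both invariant under the parabolic rescaling $g^{\pm}_i(t)=(\tau^{\pm}_i)^{-1}g(\tau^{\pm}_i t)$, every rescaled flow is again $(C,\kappa)$-controlled; in particular it has uniformly bounded curvature on $M\times[-a,-a^{-1}]$ for each $a>1$, uniformly bounded covariant derivatives of curvature by the Shi-type lemma of this section, and a uniform lower injectivity radius bound at $(x,-1)$. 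Hamilton's compactness theorem then produces, after passing to a subsequence, a limit Ricci flow $(S^{\pm},h^{\pm}(t),(x^{\pm},-1))$ on $(-\infty,0)$, which inherits the $(C,\kappa)$-control. It remains to see that $S^{\pm}$ is a shrinking soliton, normalized at $t=-1$.

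Let $\bar l^{\tau}_i$ be the singular reduced length function from $(x,0)$ for $g^{\pm}_i$; by the stated scale invariance of the reduced length $\bar l^{\tau}_i$ is the $\tau\tau^{\pm}_i$-slice of the original $\bar l^{\tau}$, so Proposition C gives $C^{0,1}$ bounds on the $\bar l^{\tau}_i$ over compact subsets of $S^{\pm}\times(-\infty,0)$ that are independent of $i$. Exactly as in the construction of $\bar l^{\tau}$, the $\bar l^{\tau}_i$ then converge in $C^{0,\alpha}_{loc}$ and weakly in $W^{1,2}_{loc}$ to some $l^{\tau}_{\infty}\in C^{0,1}(S^{\pm}\times(-\infty,0))$ for which Propositions B and C still hold. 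The singular reduced volumes satisfy $\bar{\mathcal V}_{(M,g^{\pm}_i)}(\tau)=\bar{\mathcal V}_M(\tau\tau^{\pm}_i)$; since $\bar{\mathcal V}_M$ is monotone and bounded above by $(4\pi)^{n/2}$, the limits $L^{-}:=\lim_{\sigma\to0^+}\bar{\mathcal V}_M(\sigma)$ and $L^{+}:=\lim_{\sigma\to\infty}\bar{\mathcal V}_M(\sigma)$ exist with $L^{+}\le L^{-}$. Using the quadratic lower bound on $l^{\tau}_{\infty}$ from Proposition C together with the uniform curvature bound to rule out loss of mass in the limit (as in the proof of the upper bound for $\bar{\mathcal V}_M$), one concludes that the limiting singular reduced volume $\bar{\mathcal V}_{S^{\pm}}(\tau)\equiv L^{\pm}$ is \emph{constant} in $\tau$ (and positive, as an integral of a positive density over $S^{\pm}$).

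Now feed $\tfrac{d}{d\tau}\bar{\mathcal V}_{S^{\pm}}\equiv0$ into the differentiated monotonicity formula: this forces $\int_{S^{\pm}}(\partial_{\tau}l^{\tau}_{\infty}+\tfrac n{2\tau}-R)\tau^{-n/2}e^{-l^{\tau}_{\infty}}=0$, and combined with the distributional inequality (b1) of Proposition B and the identity $\int(\triangle l^{\tau}_{\infty}-|\nabla l^{\tau}_{\infty}|^{2})e^{-l^{\tau}_{\infty}}=0$ of Remark \ref{dist_ncom}, the defect $\delta$ in Proposition B must vanish identically. This puts us in Perelman's equality case \cite{P}: the relations $\partial_{\tau}l-\triangle l+|\nabla l|^{2}-R+\tfrac n{2\tau}=0$ and $2\triangle l-|\nabla l|^{2}+R+\tfrac{l-n}{\tau}=0$ hold; a bootstrap off the first (a backward heat equation with bounded coefficients) upgrades $l^{\tau}_{\infty}$ to a smooth function and Perelman's computation yields $Rc[h^{\pm}(-\tau)]+\nabla^{2}l^{\tau}_{\infty}-\tfrac1{2\tau}h^{\pm}(-\tau)=0$. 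Setting $\tau=1$ and $f^{\pm}:=l^{1}_{\infty}$ gives $Rc+\nabla^{2}f^{\pm}=\tfrac12 h^{\pm}(-1)$, and the two relations at $\tau=1$, together with $R+\triangle f^{\pm}=\tfrac n2$, simplify to $\triangle f^{\pm}-|\nabla f^{\pm}|^{2}+f^{\pm}=\tfrac n2$, i.e. $f^{\pm}$ is a normalized soliton function. Thus $(S^{\pm},h^{\pm}(-1),f^{\pm})$ is a normalized $(C,\kappa)$-controlled shrinking soliton. I expect this rigidity/regularity step, together with the no-loss-of-mass claim used above, to be the main obstacle, since one must run Perelman's equality case with a limit function that is a priori only Lipschitz.

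Finally, suppose $S^{-}$ and $S^{+}$ are isometric as Riemannian manifolds. Since $\bar{\mathcal V}_{S^{\pm}}(1)=\int_{S^{\pm}}e^{-f^{\pm}}\,dv=Vol_{f^{\pm}}(S^{\pm})$, we have $L^{\pm}=Vol_{f^{\pm}}(S^{\pm})$; by Lemma \ref{f_vol} the $f$-volume of a normalized soliton depends only on the underlying Riemannian manifold, so $L^{-}=L^{+}$, and hence the monotone function $\bar{\mathcal V}_M$ is constant on all of $(-\infty,0)$. Applying the rigidity argument above directly to $(M,g(t))$ (with $\bar l^{\tau}$ in place of $l^{\tau}_{\infty}$) shows $(M,g(t))$ is itself a gradient shrinking soliton, with associated diffeomorphisms $\phi_t$; in the variable $s=\log(-t)$ these become the one-parameter group $\Phi_s$ generated by $-\nabla f$, whence $g^{+}_i(t)=\Phi_{\log\tau^{+}_i}^{\,*}g(t)$. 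Because $f$ is proper and bounded below (standard for shrinking solitons with bounded curvature), the orbit $\Phi_s(x)$ stays in the compact sublevel set $\{f\le f(x)\}$ for $s\ge0$, so the basepoints $\Phi_{\log\tau^{+}_i}(x)$ remain bounded and (after a further subsequence converging to $p_{\infty}$) the pointed limit of $(M,g^{+}_i(t),(x,-1))=(M,g(t),(\Phi_{\log\tau^{+}_i}(x),-1))$ is $(M,g(t),(p_{\infty},-1))$. By uniqueness of Cheeger--Gromov limits, $(M,g)\cong S^{+}\cong S^{-}$, which also exhibits $(M,g)$ as a shrinking soliton and completes the proof.
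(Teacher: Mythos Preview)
Your proposal is correct and follows essentially the same approach as the paper: construct the singular reduced length $\bar l^{\tau}$ from $(x,0)$, pass it through the rescalings using scale invariance, use monotonicity of $\bar{\mathcal V}_M$ to force the limiting reduced volumes $\bar{\mathcal V}_{S^{\pm}}$ to be constant, and then invoke Perelman's equality case to conclude the limits are normalized gradient shrinkers; for the rigidity part, both you and the paper use Lemma~\ref{f_vol} to equate $L^-$ and $L^+$, force $\bar{\mathcal V}_M$ constant, and rerun the equality argument on $(M,g(t))$ itself.

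The only substantive difference is in the last line. The paper's proof is terse there: once $\bar l^{\tau}$ is shown to be a soliton function on $(M,g(t))$ it simply asserts that $(M,g)$ ``must be isometric to $(S^{\pm},g^{\pm})$''. You instead supply the mechanism explicitly, observing that once $(M,g)$ is a shrinker with potential $f$, the rescalings $g^+_i$ differ from $g$ by the diffeomorphisms $\phi_{-\tau^+_i}$ generated by $-\nabla f$, and properness of $f$ traps the moving basepoints $\phi_{-\tau^+_i}(x)$ in a compact sublevel set, so the pointed limit of $(M,g^+_i,x)$ is $(M,g,p_\infty)$ itself. This is exactly the argument the paper deploys later in Section~3 (in the proof of the noncollapsing theorem) rather than here; importing it at this stage is a reasonable and arguably cleaner way to close the isometry claim, and it also makes transparent why the identification goes through $S^+$ rather than $S^-$, in line with the remark following the theorem.
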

\begin{proof}
Let $\bar{l}^{\tau}$ be a singular reduced length for $(x,0)$ with reduced volume $\mathcal{\bar{V}}_{M}$,and let $\bar{l}^{\pm,\tau}_{i} = \bar{l}^{\tau/\tau^{\pm}_{i}}$ be singular reduced lengths for $(M,g^{\pm}_{i}(t))$ at $(x,0)$ with respective reduced volumes $\mathcal{\bar{V}}^{\pm}_{i}$.  By compactness we get the existence of some $(C,\kappa)$-controlled Ricci flows $(S^{\pm},h^{\pm}(t),x^{\pm})$ after passing to subsequences.  As in the construction of $\bar{l}^{\tau}$ we can limit out $\bar{l}^{+,\tau}_{i}$, $\bar{l}^{-,\tau}_{i}$ to $C^{0,1}$ functions $\bar{l}^{\pm,\tau}$ on $S^{\pm}\times(-\infty,0)$ which satisfy propositions (B) and (C), which have respective reduced volumes $\mathcal{\bar{V}}^{\pm}(\tau)$.

Now, however, there is some additional structure.  Recall that by scale invariance $\mathcal{\bar{V}}^{\pm}_{i}(\tau) =\mathcal{\bar{V}}(\tau\tau^{\pm}_{i})$, and that from the decay estimates in (C) we have $\mathcal{\bar{V}}^{\pm}_{i}(\tau)\rightarrow \mathcal{\bar{V}}^{\pm}(\tau)$.  But both $\mathcal{\bar{V}}(\tau\tau^{\pm}_{i})$ are bounded monotone sequence, and hence converge regardless of $\tau$.  We see that $\mathcal{\bar{V}}^{\pm}(\tau)=c^{\pm}=constants$.  But as in \cite{MT} we see then that $0 = \frac{d}{d\tau}\mathcal{\bar{V}}^{\pm}(\tau) = -\int_{M}(\frac{\partial l^{\pm}}{\partial\tau} - R(q,\tau) + \frac{n}{2\tau})\tau^{-n/2}e^{-l^{\pm}}dv_{g^{\pm}} = -\int_{M}(\frac{\partial l^{\pm}}{\partial\tau} - \triangle l^{\pm}(q) + |\nabla l^{\pm}|^{2} - R(q,\tau) + \frac{n}{2\tau})\tau^{-n/2}e^{-l^{\pm}}dv_{g^{\pm}}$, where it is understood $\triangle l^{\pm}$ has been extended as in remark (\ref{dist_ncom}).  For simplicity we write $D = \frac{\partial l^{\pm}}{\partial\tau} - \triangle l^{\pm}(q) + |\nabla l^{\pm}|^{2} - R(q,\tau) + \frac{n}{2\tau}$ to be this linear functional.  But then $\forall \phi$ with compact support and $0\leq \phi \leq 1$ we can write $0 = \int_{M}D\tau^{-n/2}e^{-l^{\pm}}dv_{g^{\pm}} = \int_{M}D\phi\tau^{-n/2}e^{-l^{\pm}}dv_{g^{\pm}} + \int_{M}D(1-\phi)\tau^{-n/2}e^{-l^{\pm}}dv_{g^{\pm}}\geq \int_{M}D\phi\tau^{-n/2}e^{-l^{\pm}}dv_{g^{\pm}} \geq 0$.  By splitting any $|\phi|\leq 1$ into positive and negative parts we see $D=0$ as a distribution.  Since $l^{\pm}$ are lipschitz it follows from standard estimates that $l^{\pm}$ are in fact smooth solutions to $\frac{\partial l^{\pm}}{\partial\tau} - \triangle l^{\pm}(q) + |\nabla l^{\pm}|^{2} - R(q,\tau) + \frac{n}{2\tau} = 0$ and hence $2\triangle l^{\pm}(q) - |\nabla l^{\pm}|^{2} + R(q,\tau) + \frac{l^{\pm} - n}{\tau}=0$.  It is then a clever computation of Perelman's (\cite{P},\cite{MT}) that this implies that $l^{\pm}(\tau)$ are soliton functions with soliton constants $\frac{1}{2\tau}$.  Since $\l^{\pm}$ are solitons we see $R+\triangle l^{\pm} = n\frac{1}{2\tau}$ and $R+|\nabla l^{\pm}|^{2}-2\lambda l^{\pm} = a^{\pm}=constants$.  From $(2\triangle l^{\tau}_{x}(q) - |\nabla l^{\tau}_{x}|^{2} + R(q,\tau) + \frac{l^{\tau}_{x} - n}{\tau}=0)$ we get though that $a^{\pm}=0$ and hence $l^{\pm}$ are normalized.  It then follows that if $(S^{\pm},g^{\pm})$ are isometric then $c^{+}=c^{-}$.  But $lim_{\tau\rightarrow 0}\mathcal{\bar{V}}(\tau)=c^{+}$ and $lim_{\tau\rightarrow \infty}\mathcal{\bar{V}}(\tau)=c^{-}$, with $\mathcal{\bar{V}}(\tau)$ monotone. Hence $\mathcal{\bar{V}}(\tau)=constant$ and by the same arguments $\bar{l}^{\tau}$ is a shrinking soliton structure on $(M,g(t))$ which must be isometric to $(S^{\pm},g^{\pm})$.
\end{proof}

\begin{remark}
A little care is needed in the above.  If we know $(M,g(t))$ is a shrinking soliton it is not necessarily the case that the asymptotic soliton $(S^{-},g^{-})$ is isometric to $(M,g)$, and also not necessarily the case that the singular reduced length function $\bar{l}^{\tau}$ from $(x,0)$ is a soliton function.  As we will see it is, however, always the case that $(M,g)$ is isometric to $(S^{+},g^{+})$, a phenomena which will have applications for studying the collapseness and gradient behavior of an arbitrary shrinking soliton.
\end{remark}

We apply the tools from this section to finish the proof of Theorem \ref{mthm5}

\begin{proof}[Theorem \ref{mthm5}]
As in remark (\ref{no_kappa}) we point out that all the estimates of this section hold on $[-T,0)$ if we only assume a curvature estimate $|Rm|\leq \frac{C}{|t|}$ for $t\in [-T,0)$.  So we may still construct a singular reduced length function at $(x,0)$ and if $(M,g(t))$ is uniformly $\kappa$-noncollapsed at $(x,t)$ then verbatim as the last theorem we may limit out $(M,g_{i}(t),(x,-1))$ and the singular reduced length functions $l_{i}$ to a normalized soliton function $l$ on the limit.  That $(M,g(t))$ is $\kappa$-noncollapsed at $(x,t)$ follows by a result of Perelman's (Theorem \ref{vol_noncol}) because we have growth estimates on the reduced length functions and bounds on $l_{x}^{\tau}(x)$  by Proposition C, and thus we have a uniform lower bound on the reduced volume $\mathcal{V}_{M,g}(\tau)$ and so lower bounds on $\kappa$-noncollapseness at $(x,t)$.
\end{proof}

\section{Non-Collapsing and Gradient Behavior of Shrinking Solitons}
We prove theorems \ref{mthm3} and \ref{mthm4} in this section.  The key noncollapsing result we will need is the following theorem of Perelman's. In fact, Perelman's theorem is stronger than the following, but it will suffice for our purposes.

\begin{thm}\label{vol_noncol}
Let $(M,g(t))$, $t\in [0,T]$, be a Ricci flow of complete Riemannian Manifolds and let $(x,T)\in M\times\{T\}$. Let $0<\tau\leq T$ and $V>0$.  Then there exists $\kappa=\kappa(n,V)$ such that if $0<r\leq \sqrt{\tau}$ with the property that $|Rm|\leq r^{-2}$ on $P(x,T,r)$ and the reduced volume $\mathcal{V}_{M}(\tau)\geq V$, then $Vol_{g(T)}(B_{r}(x))\geq \kappa r^{n}$.
\end{thm}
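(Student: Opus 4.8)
The plan is to reproduce Perelman's argument deriving a volume lower bound from a reduced-volume lower bound, since this is precisely his $\kappa$-noncollapsing theorem. By the scale invariance of the reduced volume noted above, rescaling $g$ so that $r=1$, we may assume $|Rm|\le 1$ on $P(x,T,1)$, that $\tau\ge 1$, and that $\mathcal{V}_M(\tau)\ge V$; the goal becomes $Vol_{g(T)}(B_{g(T)}(x,1))\ge\kappa(n,V)$. The main tool is the $\mathcal{L}$-exponential map $\mathcal{L}\exp_\tau:T_xM\to M$ sending $v$ to $\gamma_v(\tau)$, where $\gamma_v$ is the $\mathcal{L}$-geodesic from $(x,T)$ with $\lim_{\tau\to 0^+}\sqrt\tau\,\dot\gamma_v(\tau)=v$, together with Perelman's pointwise monotonicity: the density $\tau^{-n/2}e^{-l_x^\tau(\gamma_v(\tau))}\mathcal{J}_v(\tau)$, where $\mathcal{J}_v(\tau)=|\det d(\mathcal{L}\exp_\tau)_v|$, is nonincreasing in $\tau$ and converges as $\tau\to 0^+$ to a fixed Gaussian weight comparable to $e^{-|v|^2}$. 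Integrating this in $v$ recovers $\mathcal{V}_M(\tau)\le(4\pi)^{n/2}$ and the monotonicity $\mathcal{V}_M(\epsilon)\ge\mathcal{V}_M(\tau)$ for $0<\epsilon\le\tau$; in particular $\mathcal{V}_M(\epsilon)\ge V$ for every $\epsilon\in(0,1)$.

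Now fix $\epsilon\in(0,1)$, to be chosen depending only on $n$ and $V$, and suppose toward a contradiction that $\delta:=Vol_{g(T)}(B_{g(T)}(x,1))$ is small. Writing $\mathcal{V}_M(\epsilon)$ as an integral over $T_xM$ via $\mathcal{L}\exp_\epsilon$ (the $\mathcal{L}$-cut locus has measure zero and is ignored), split $\mathcal{V}_M(\epsilon)=I_1+I_2$ according to whether $|v|\le\rho$ or $|v|>\rho$. For $I_2$, the pointwise monotonicity gives that the integrand is dominated by the limiting Gaussian weight, so $I_2\le\int_{\{|v|>\rho\}}c_n e^{-|v|^2}\,dv=:E(\rho)$, and $E(\rho)\to 0$ as $\rho\to\infty$; choose $\rho=\rho(n,V)$ with $E(\rho)<V/2$. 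For $I_1$, a continuity/bootstrap argument using $|Rm|\le 1$ on $P(x,T,1)$ shows that, once $\epsilon=\epsilon(n,\rho)$ is small enough, every $\mathcal{L}$-geodesic $\gamma_v$ with $|v|\le\rho$ stays in $P(x,T,1)$ on $[0,\epsilon]$ and ends in $B_{g(T)}(x,1)$: on the interval where it stays inside, the metrics $g(t)$, $t\in[T-\epsilon,T]$, are uniformly comparable since $|\partial_t g|=2|Rc|\le C(n)$, hence $|\dot\gamma_v|\le C(n)|v|/\sqrt\tau$ and $d_{g(T-\epsilon)}(x,\gamma_v(\epsilon))\le C(n)\rho\sqrt\epsilon$, which is $<1$ for $\epsilon$ small, so the geodesic cannot escape. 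On such a geodesic $R\ge -n(n-1)$, so $l_x^\epsilon(\gamma_v(\epsilon))=\frac{1}{2\sqrt\epsilon}\int_0^\epsilon\sqrt\tau\,(R+|\dot\gamma_v|^2)\,d\tau\ge -C(n)\epsilon\ge -C(n)$, and $dv_{g(T-\epsilon)}\le e^{C(n)\epsilon}dv_{g(T)}\le 2\,dv_{g(T)}$ on $B_{g(T)}(x,1)$. Hence $I_1\le\epsilon^{-n/2}e^{C(n)}\,Vol_{g(T-\epsilon)}\big(B_{g(T)}(x,1)\big)\le 2e^{C(n)}\epsilon^{-n/2}\delta$.

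Combining the two bounds, $V\le\mathcal{V}_M(\epsilon)=I_1+I_2\le 2e^{C(n)}\epsilon^{-n/2}\delta+\tfrac V2$, so $\delta\ge \epsilon^{n/2}V/(4e^{C(n)})=:\kappa(n,V)$; undoing the scaling yields $Vol_{g(T)}(B_r(x))\ge\kappa r^n$. The one genuinely substantial input here is Perelman's monotonicity of the $\mathcal{L}$-Jacobian density --- the $\mathcal{L}$-geodesic analogue of the Bishop--Gromov inequality, proved through a second variation computation for the $\mathcal{L}$-length --- while the remaining work is the soft but slightly fiddly bootstrap keeping the short $\mathcal{L}$-geodesics inside $P(x,T,1)$ together with routine volume-form and length comparisons; all of this is standard and we will only adapt the bookkeeping, referring to \cite{P},\cite{MT} for the details.
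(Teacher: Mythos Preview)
The paper does not actually prove this theorem: it is stated as ``the following theorem of Perelman's'' and then used without proof, with the references \cite{P}, \cite{MT} implicitly supplying the argument. Your proposal correctly reproduces Perelman's standard proof --- scale to $r=1$, use monotonicity to pass to $\mathcal{V}_M(\epsilon)$ for small $\epsilon$, split the $\mathcal{L}$-exponential integral at radius $\rho$ in $T_xM$, bound the far piece by the Gaussian via pointwise Jacobian monotonicity, and bound the near piece by $\epsilon^{-n/2}e^{C(n)}Vol_{g(T)}(B_1(x))$ after a continuity argument confining short $\mathcal{L}$-geodesics to $P(x,T,1)$ --- which is exactly the argument in \cite{P} and \cite{MT} that the paper is invoking.

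One cosmetic remark: you frame the last step as a contradiction argument (``suppose $\delta$ is small''), but in fact you never use smallness of $\delta$ and simply derive the lower bound $\delta\ge \epsilon^{n/2}V/(4e^{C(n)})$ directly; you may as well drop the contradiction framing.
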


Now we state our first main result of this section:

\begin{thm}
Let $(M,g,f)$ be a normalized complete shrinking soliton with bounded
curvature.  Then there exists $\kappa=\kappa(n,Vol_{f}(M))$ such that the associated
Ricci flow $(M,g(t))$ is $\kappa$-noncollapsed.
\end{thm}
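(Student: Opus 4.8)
The plan is to reduce the statement to Perelman's volume‑noncollapsing criterion (Theorem \ref{vol_noncol}), so that everything comes down to producing a lower bound $\mathcal V^{(x_0,t_0)}_M(\tau)\ge V$ on the reduced volume of the associated Ricci flow with $V=V(n,Vol_f(M))>0$. I would first set up the flow: the normalized soliton has an associated Ricci flow $(M,g(t))$, $t\in(-\infty,0)$, with $g(-1)=g$, and it is self-similar, $g(t)=(-t)\phi_t^{*}g$, so that $|Rm[g(t)]|=\tfrac1{|t|}|Rm[g]|\circ\phi_t\le\tfrac{C}{|t|}$ with $C:=\sup_M|Rm[g]|<\infty$, and $c^{-1}g(c\,\cdot)$ is isometric to $g(\cdot)$ for every $c>0$. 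Hence $(M,g(t))$ falls under all of Section 2 -- Propositions A--C on reduced lengths, the monotonicity of the reduced volume $\mathcal V_M$ and of the singular reduced volume $\mathcal{\bar V}_M$, the $C^{0,1}$-construction of singular reduced lengths from $(x,0)$ -- and, crucially, by Remark \ref{no_kappa} none of these uses $\kappa$-noncollapsing. Granting the bound $\mathcal V^{(x_0,t_0)}_M(\tau)\ge V$ for all basepoints and all $\tau>0$, one then restricts $g(t)$ to $[t_0-\tau,t_0]$ with $\tau\ge r^2$ arbitrarily large and translates time; Theorem \ref{vol_noncol}, whose constant depends only on $n$ and $V$ (never on $C$), gives $Vol_{g(t_0)}(B_r(x_0))\ge\kappa r^n$ whenever $|Rm|\le r^{-2}$ on $P(x_0,t_0,r)$, which is $\kappa$-noncollapsedness with $\kappa=\kappa(n,Vol_f(M))$.

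The potential $f$ is what makes the reduced volume see $Vol_f(M)$. It induces normalized potentials $f_\tau=f\circ\phi_{-\tau}$ on the slices $(M,g(-\tau))$ -- normalized solitons with constant $\tfrac1{2\tau}$ -- and a change of variables, exactly as in the proof of Lemma \ref{f_vol}, gives $\int_M\tau^{-n/2}e^{-f_\tau}\,dv_{g(-\tau)}=Vol_f(M)$ for every $\tau$ (so $0<Vol_f(M)<\infty$). Tracing the soliton equation yields $R+\Delta f_\tau=\tfrac{n}{2\tau}$ and $R+|\nabla f_\tau|^2=\tfrac1\tau f_\tau$, hence $2\Delta f_\tau-|\nabla f_\tau|^2+R+\tfrac{f_\tau-n}{\tau}=0$ together with the matching evolution identity; so $f_\tau$ satisfies the identities of Proposition B with zero error, just as the reduced length of an asymptotic soliton does in Theorem \ref{asym_sol}, and it satisfies the quadratic bounds of Proposition C. Thus $f$ behaves as a reduced length from the singular point: by self-similarity $\mathcal{\bar V}_M$ is scale invariant, hence constant by monotonicity, whereupon Perelman's computation (as in the proof of Theorem \ref{asym_sol}) forces the singular reduced length $\bar l^{\tau}$ to be a normalized soliton function, and by Lemma \ref{f_vol} $\mathcal{\bar V}_M(1)=\int_M e^{-\bar l^1}\,dv_g=\int_M e^{-f}\,dv_g=Vol_f(M)$. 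It remains to see that $\mathcal V^{(x_0,t_0)}_M(\tau)\ge Vol_f(M)$: since $\mathcal V^{(x_0,t_0)}_M$ is non-increasing in $\tau$, it is enough that $\lim_{\tau\to\infty}\mathcal V^{(x_0,t_0)}_M(\tau)=Vol_f(M)$, and this one proves by the same mechanism as Theorem \ref{asym_sol} -- rescale so that the monotone, bounded quantity $\mathcal V^{(x_0,t_0)}_M$ becomes constant, so the monotonicity defect vanishes in the limit, forcing the limiting reduced length to be a normalized soliton function, whose reduced volume Lemma \ref{f_vol} evaluates as $Vol_f(M)$. Then $V=Vol_f(M)$ works.

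I expect the technical heart to be the passage to the limit $\tau\to\infty$ in the last step. It is not circular: the rescalings $\tau^{-1}g(t_0+\tau(\cdot-t_0))$ live on the fixed manifold $M$, so no Cheeger--Gromov compactness -- and hence none of the $\kappa$-noncollapsing we are trying to prove -- is invoked; instead the convergence of the reduced lengths is governed by the uniform estimates of Proposition C, which hold with no noncollapsing hypothesis (Remark \ref{no_kappa}). One still has to handle with care the case where the rescaled metrics degenerate (the Euclidean/Gaussian factors), and the computation identifying the singular reduced volume of a normalized soliton flow with $Vol_f(M)$. A clean alternative is to bypass $\mathcal{\bar V}_M$ and run Perelman's proof of Theorem \ref{vol_noncol} directly with the soliton quantity $\tau^{-n/2}e^{-f_\tau}$ -- an honest conjugate-heat solution of constant total mass $Vol_f(M)$ that concentrates at the minimum of $f$ -- in place of $\tau^{-n/2}e^{-\ell^\tau_x}$, at the price of bookkeeping relating the test point to the minimum of $f$.
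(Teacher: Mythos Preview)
Your strategy is the paper's strategy: reduce to a uniform lower bound $\mathcal V_M(\tau)\ge V$ on the reduced volume, feed this into Perelman's Theorem \ref{vol_noncol}, and identify $V$ with $Vol_f(M)$ via the self-similarity of the soliton flow together with Lemma \ref{f_vol}. The skeleton is right; what is missing is precisely the mechanism at the point you yourself flag as the technical heart.

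Your claim that ``by self-similarity $\mathcal{\bar V}_M$ is scale invariant, hence constant'' is not justified as written. Scale invariance gives $\mathcal{\bar V}^{(x,0)}_{(M,g)}(\tau)=\mathcal{\bar V}^{(x,0)}_{(M,\tau_i^{-1}g(\tau_i\cdot))}(\tau/\tau_i)$, while self-similarity identifies the rescaled flow with the original only \emph{via the diffeomorphism $\phi_{\tau_i}$}; composing, you get $\mathcal{\bar V}^{(x,0)}(\tau)=\mathcal{\bar V}^{(\phi_{-\tau_i}(x),0)}(\tau/\tau_i)$. The basepoint moves, so constancy does not follow. The same gap reappears in your $\tau\to\infty$ limit for $\mathcal V^{(x_0,t_0)}_M$: saying the rescalings ``live on the fixed manifold $M$'' does not prevent the metrics $\tau^{-1}g(\tau t)$ from degenerating pointwise on $M$, and Proposition C alone does not identify a limit space.

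The paper closes this gap with one clean geometric observation you should add. The rescaling $(M,\tau^{-1}g(-\tau),x)$ is \emph{isometric} to $(M,g(-1),\phi_{-\tau}(x))$; and since $f$ is proper with $\nabla f$ pointing outward (Lemma \ref{growth_sol}), the backward gradient flow carries $x$ into a compact set and $\phi_{-\tau}(x)\to p$, a critical point of $f$. Thus $(M,g_i(t),x)\to(M,g(t),p)$ \emph{isometrically}, with no Cheeger--Gromov compactness and no noncollapsing assumed. After this, your argument runs: the (singular) reduced lengths $\bar l_i$ converge to a normalized soliton function $\bar l$ on $(M,g)$, and Lemma \ref{f_vol} gives $\int_M e^{-\bar l}\,dv_g=Vol_f(M)$, hence $\mathcal V_M(\tau)\ge Vol_f(M)$ for all $\tau$. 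Your proposed alternative through the conjugate-heat solution $\tau^{-n/2}e^{-f_\tau}$ is viable but, as you note, encounters the same bookkeeping of relating the test point to a critical point of $f$; the observation above is exactly that bookkeeping.
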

\begin{proof}

Let $(x,T)\in M\times(-\infty,0)$.  Since $M\times\{T\}$ differs from $M\times\{-1\}$ by rescaling and a diffeomorphism we may assume $T=-1$.  Pick $r>0$ such that  $|Rm|\leq r^{-2}$ on $P(x,-1,r)$.  We will show $\exists$ $V = V(Vol_{f}(M))$ such that the reduced volume $\mathcal{V}_{M}(\tau)\geq V > 0$ for all $\tau$. Then we can apply the last theorem to finish ours.

Consider the sequence $(M,\tau^{-1}g(\tau t),(x,-1))$ as $\tau\rightarrow \infty$.  We will show $(M,\tau^{-1}g(\tau t),x)\rightarrow (M,g(t),p)$ for some $p\in M$.  Given this for the moment, we let $\bar{l}_{i}$ be the singular reduced length functions to $(x,0)$ in $(M,g_{i}(t),x)$ where $g_{i}(t)=\tau_{i}^{-1}g(\tau_{i}t)$ with $\tau_{i}\rightarrow\infty$ .  As in the proof of theorem \ref{asym_sol} we see $\bar{l}_{i}\rightarrow \bar{l}$, where $\bar{l}$ is a normalized shrinking soliton function.  By the estimates of proposition (C) and the bounded curvature we again get that  $\mathcal{V}_{M}(\tau)\rightarrow \int_{M}e^{-\bar{l}}dv_{g}$, a constant in $\tau$.  But by lemma \ref{f_vol} we showed the reduced volume of any two normalized solitons on the same isometry class are the same, hence $\int_{M}e^{-\bar{l}}dv_{g} = \int_{M}e^{-f}dv_{g}$ and we are done.

To prove convergence of $(M,\tau^{-1}g(-\tau),x)$ we begin by noting that $(M,\tau^{-1}g(-\tau))$ differs from $(M,g(-1))$ by a diffeomorphism.  That is, for the 1-parameter family of diffeomorphisms $\phi_{t}$ generated by $\frac{1}{|t|}\nabla_{t=-1}f_{-1}$ with $\phi_{-1}=id$ we have that $g(-1)=(\phi_{-\tau}^{-1})^{*}(\tau^{-1}g(-\tau))$.  Hence the sequence $(M,\tau^{-1}g(-\tau),x)$ can be identified with the sequence $(M,g(-1),\phi_{-\tau}(x))$ after we change by a diffeomorphism.  Now $x$ lies on a unique integral curve of $f$, and flowing backwards let $p$ be the first critical point encountered along the flow (as usual, such a $p$ exists because $f$ is proper).  Then if we can show $ d_{g(-1)}(\phi_{-\tau}(x),p)$ remains bounded then the sequence $(M,g(-1),\phi_{-\tau}(x))$ must converge isometrically to $(M,g(-1))$, as claimed.  But by the choice of $p$, $\phi_{-\tau}(x)$ is in fact converging to $p$.  Of course the same argument works on any time slice and hence we are done.

\end{proof}

\begin{remark}
As a consequence of the above we have that the associated Ricci flow to a normalized shrinking soliton with bounded curvature is automatically $(C,\kappa)$-controlled.
\end{remark}

Next we show a shrinking soliton is gradient.

\begin{thm}
Let $(M,g,X)$ a shrinking soliton with bounded curvature.  Then there exists a smooth $f$ such that $(M,g,f)$ is a gradient shrinking soliton.
\end{thm}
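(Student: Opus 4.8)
The plan is to exploit Theorem \ref{asym_sol} together with the structure of the associated Ricci flow of $(M,g,X)$. By the Lemma in the introduction, $(M,g,X)$ has an associated Ricci flow $(M,g(t))$, $t\in(-\infty,0)$, with $g(-1)=g$; by the previous theorem (noncollapsing) this flow is $(C,\kappa)$-controlled for suitable $C,\kappa$. Fix $x\in M$ and apply Theorem \ref{asym_sol} with, say, a sequence $\tau_i^{+}\to\infty$: passing to a subsequence we obtain an asymptotic shrinking soliton $(S^{+},h^{+}(t),(x^{+},-1))$ arising as the limit of the rescalings $(M,g_i^{+}(t),(x,-1))$ with $g_i^{+}(t)=(\tau_i^{+})^{-1}g(\tau_i^{+}t)$.

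The key point, as flagged in the remark following Theorem \ref{asym_sol}, is that for a flow which is already a shrinking soliton the forward rescalings $(M,g_i^{+}(t),(x,-1))$ do not genuinely change the isometry type: just as in the proof of the noncollapsing theorem, $(M,\tau^{-1}g(-\tau))$ differs from $(M,g(-1))$ only by the diffeomorphism $\phi_{-\tau}$ generated by the (suitably rescaled) soliton vector field, and since $X$ is complete and its flow lines run into a critical/fixed region, the basepoints $\phi_{-\tau}(x)$ stay in a bounded set. Hence $(M,g_i^{+}(t),(x,-1))$ converges isometrically back to $(M,g(t))$ along time slices, so $(S^{+},h^{+})$ is isometric to $(M,g)$. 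Theorem \ref{asym_sol} produces on $S^{+}$ a normalized reduced length function $\bar l^{+,\tau}$ which is a \emph{gradient} shrinking soliton function, i.e. $\mathrm{Rc}(h^{+})+\nabla^2 \bar l^{+,-1}=\tfrac12 h^{+}$. Transporting $f:=\bar l^{+,-1}$ back through the isometry $S^{+}\cong M$ gives a smooth function $f$ on $M$ with $\mathrm{Rc}(g)+\nabla^2 f=\tfrac12 g$, which is exactly the assertion that $(M,g,f)$ is a gradient shrinking soliton.

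The main obstacle is justifying the convergence $(M,g_i^{+}(t),(x,-1))\to(M,g(t))$ as a \emph{pointed} limit of Ricci flows in a way compatible with the limit of the reduced length functions — one must check that the basepoint displacements $\phi_{-\tau}(x)$ are bounded (this uses properness of the soliton vector field's flow, i.e. that flowing backward along $X$ reaches a fixed point of $\phi_t$ in finite distance, exactly the argument already used in the noncollapsing theorem) and that the identification of $S^{+}$ with $M$ intertwines $\bar l^{+,\tau}$ with a genuine limit object rather than something defined only up to the ambiguity of the limiting procedure. Once the isometry $S^{+}\cong(M,g)$ is in place, the soliton identity for $\bar l^{+,-1}$ on $S^{+}$ transfers verbatim, and smoothness of $f$ follows from the elliptic regularity already established for $\bar l^{\pm,\tau}$ in the proof of Theorem \ref{asym_sol}. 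I would also remark, as the paper does, that this only produces a gradient structure on $(M,g)$ and need not have $\nabla f = X$; the killing field $X-\nabla f$ accounts for the discrepancy.
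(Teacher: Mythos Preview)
Your overall strategy is exactly the paper's: rescale by $\tau\to\infty$, argue the pointed rescalings converge back to $(M,g(t))$ itself because the basepoints stay bounded, and read off a gradient soliton function from the limit of the singular reduced lengths as in Theorem \ref{asym_sol}.

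Two points need repair, however. First, you invoke the noncollapsing theorem to say the flow is $(C,\kappa)$-controlled, but Theorem \ref{mthm4} as stated is for \emph{gradient} solitons (its constant depends on $\mathrm{Vol}_f(M)$), so appealing to it here is circular. The paper avoids this entirely: since each $(M,\tau^{-1}g(\tau t))$ is literally isometric to $(M,g(t))$ via $\phi_{-\tau}$, once you know the basepoints $\phi_{-\tau}(x)$ stay in a fixed ball the pointed convergence $(M,g_i(t),x)\to(M,g(t))$ is immediate and no compactness/noncollapsing is needed. (Remark \ref{no_kappa} also confirms the reduced-length estimates themselves never used $\kappa$.)

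Second, your justification that $\phi_{-\tau}(x)$ stays bounded --- ``flowing backward along $X$ reaches a fixed point of $\phi_t$'' --- borrowed the gradient argument verbatim. For a nongradient $X$ there need be no fixed point (think $\nabla f$ plus a Killing field). The paper instead uses Remark \ref{nongrad_est}: the growth estimate $\langle X,\nabla d_p\rangle \ge \lambda\, d_p + a$ holds for general soliton vector fields, so for large $r$ one has $\langle X,\nabla d_p\rangle>1$ on $\partial B_r(p)$, which forces the backward flow $\phi_{-\tau}$ to keep $B_r(p)$ invariant. That is the correct replacement for your ``reaches a critical point'' step. With these two fixes your argument coincides with the paper's.
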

\begin{proof}
First assume $M$ is noncompact and let $p\in M$.  By remark (\ref{nongrad_est}) $<X,\nabla d>\rightarrow\infty$ uniformly and hence for large $r$ $<X,\nabla d>|_{\partial B_{r}(p)} > 1$.  Let $x\in B_{r}(p)$. Let $(M,g(t))$ associated Ricci flow, $\tau_{i}\rightarrow\infty$ and $g_{i}(t)=\tau_{i}^{-1}g(\tau_{i}t)$.  Then we see as in the last theorem that since $<X,\nabla d> > 1$ on $\partial B_{r}(p)$ that if $x\in B_{r}(p)$ then $\phi_{-\tau}(x)\in B_{r}(p)$ for $\tau\rightarrow\infty$.  Hence we see $(M,g_{i}(t),x)\rightarrow (M,g(t),x)$ since $x$ remains unchanged by the generating diffeomorphisms.  If $\bar{l}_{i}^{\tau}$ are the singular reduced length functions we see by the same arguments $\bar{l}_{i}^{\tau}\rightarrow\bar{l}^{\tau}$, a normalized shrinking soliton structure on $(M,g)$.

If $M$ is compact then the above is even easier since $d(x,\phi_{-\tau}(x))\leq diam(M)$ $\forall \tau$, and hence $(M,g(t),\phi_{-\tau}(x))\rightarrow (M,g(t))$.  The rest is then the same as above.
\end{proof}

\section{Asymptotic Solitons}

We will apply the previous estimates on the singular reduced length function's to understand the geometry of infinity of a shrinking soliton.  To begin with we will be able to use the soliton function directly to understand part of this behavior:

\begin{lem}
Let $(M,g)$ be a smooth complete Riemannian Manifold.  Let $f:M\rightarrow\mathds{R}$ be a smooth function with $|\nabla^{2}f|\leq C$ and $|\nabla f|(x)\rightarrow\infty$ as $x\rightarrow\infty$.  Then $\forall$ sequence $x_{n}\rightarrow\infty$ such that $(M,g,x_{n})\stackrel{C^{1,\alpha}-CG}{\rightarrow}(M_{\infty},g_{\infty},x_{\infty})$ we have that $(M_{\infty,g_{\infty}})\approx(\mathds{R},ds^{2})\times(N,h)$ splits isometrically.
\end{lem}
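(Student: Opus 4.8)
The plan is to exploit the fact that on a pointed limit along $x_n\to\infty$ the function $|\nabla f|$ blows up, while $|\nabla^2 f|$ stays bounded, so after renormalizing $f$ near $x_n$ (subtracting $f(x_n)$ and rescaling) one should be able to extract a limiting function $u$ on $M_\infty$ whose gradient has constant length and whose Hessian vanishes; a parallel nontrivial gradient vector field then splits off an $\mathds{R}$-factor by de Rham. Concretely, first I would set $\lambda_n = |\nabla f|(x_n)$, so $\lambda_n\to\infty$, and consider the functions $f_n(x) = \lambda_n^{-1}\bigl(f(x) - f(x_n)\bigr)$ on larger and larger balls $B(x_n,R)$ in $(M,g)$. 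On such a ball $f_n(x_n)=0$, $|\nabla f_n|(x_n)=1$, and $|\nabla^2 f_n| \le C/\lambda_n \to 0$. The Hessian bound gives a Lipschitz bound on $|\nabla f_n|$, hence $|\nabla f_n|$ is within $O(R\,C/\lambda_n)$ of $1$ throughout $B(x_n,R)$; in particular $f_n$ has uniformly bounded $C^{1,\alpha}$ norm on every fixed ball.

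Next I would pass to the limit. Under the $C^{1,\alpha}$ Cheeger--Gromov convergence $(M,g,x_n)\to(M_\infty,g_\infty,x_\infty)$, the metrics converge in $C^{1,\alpha}_{loc}$, and the uniformly-bounded family $f_n$ subconverges (after possibly passing to a further subsequence) in $C^{1,\alpha}_{loc}$ to a function $u\in C^{1,\alpha}(M_\infty)$ with $u(x_\infty)=0$. The estimate $|\,|\nabla f_n| - 1\,|\le C R/\lambda_n$ on $B(x_n,R)$ passes to the limit to give $|\nabla u|\equiv 1$ on all of $M_\infty$, and the bound $|\nabla^2 f_n|\to 0$ (interpreted weakly, or via the $C^{1,\alpha}$-convergence of the connections together with elliptic regularity for the equation $|\nabla u|^2 = 1$) shows $\nabla^2 u \equiv 0$, so $u$ is smooth with parallel gradient. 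Then $V = \nabla u$ is a globally defined parallel unit vector field on the complete manifold $M_\infty$, so by the de Rham splitting theorem $(M_\infty,g_\infty)$ isometrically splits as $(\mathds{R},ds^2)\times(N,h)$ with $u$ the coordinate on the $\mathds{R}$-factor (up to translation), which is exactly the claim.

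The technical heart — and the step I expect to require the most care — is making rigorous the claim that $\nabla^2 u = 0$ in the limit, since $C^{1,\alpha}$-convergence does not a priori control second derivatives of $f_n$. The clean way around this is to not differentiate twice in the limit at all: one shows directly that $u$ is an affine-distance function, i.e. that along the limit the level sets of $u$ are totally geodesic and the gradient flow lines are unit-speed geodesics, by passing the corresponding \emph{integral/comparison} statements for $f_n$ (whose error terms are $O(C/\lambda_n)$) to the limit. For instance, for a unit-speed geodesic $\sigma$ in $(M,g)$ one has $\frac{d^2}{ds^2} f_n(\sigma(s)) = \nabla^2 f_n(\dot\sigma,\dot\sigma)$, which is $O(C/\lambda_n)$ uniformly, so in the limit $u\circ\sigma$ is affine for every geodesic $\sigma$ in $M_\infty$; a function that is affine along every geodesic and has $|\nabla u|\equiv 1$ is, by a standard argument, a Busemann-type linear function realizing an isometric $\mathds{R}$-splitting. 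One should also note the harmless subtlety that $f_n$ and the rescaled metric live on $M$ near $x_n$ while the limit lives on $M_\infty$, so all statements are pulled back via the Cheeger--Gromov diffeomorphisms before taking limits; and the choice of subsequence in extracting $u$ is compatible with the one already chosen for the metric convergence.
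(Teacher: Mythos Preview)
Your proposal is correct and follows essentially the same route as the paper: normalize by setting $f_n = (f - f(x_n))/|\nabla f|(x_n)$, extract a $C^{1,\alpha}$ limit $f_\infty$ with $f_\infty(x_\infty)=0$, $|\nabla f_\infty|(x_\infty)=1$, and vanishing Hessian, then invoke the de Rham splitting. The only difference is in the step you flag as delicate: the paper handles it more directly by observing that the uniform Lipschitz bound $C/\lambda_n$ on $\nabla f_n$ passes to the limit, so $\nabla f_\infty$ has Lipschitz constant zero (i.e.\ is parallel), hence $f_\infty$ is smooth with $\nabla^2 f_\infty = 0$ --- your affine-along-geodesics workaround is a valid alternative but not needed.
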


\begin{proof}
Let $x_{n}$ be such a sequence.  Let $f_{n}(x)=\frac{f(x)-f(x_{n})}{|\nabla f|(x_{n})}$ for $n$ large enough that $|\nabla f|(x_{n})>0$.  Then $f_{n}(x_{n})=0$, $|\nabla f_{n}|(x_{n})=1$ and $|\nabla^{2}f_{n}|\leq \frac{C}{|\nabla f|(x_{n})}$.  Hence $\exists$ $f_{\infty}\in C^{1,1}(M_{\infty})$ such that $f_{n}\stackrel{C^{1,\alpha}- CG}{\rightarrow}f_{\infty}$ with $f_{\infty}(x_{\infty})=0$, $|\nabla f_{\infty}|(x_{\infty})=1$ and $|\nabla^{2}f_{\infty}|=0$ in lipschitz sense on $\nabla f_{\infty}$ because such bounds pass to the limit, hence $f_{\infty}$ is in fact smooth and $|\nabla^{2}f_{\infty}|=0$ in smooth sense.  Further since $f_{\infty}\neq const$ ($|\nabla f_{\infty}|(x_{\infty})=1$) we have that $f_{\infty}$ must be linear and hence $(M_{\infty},g_{\infty})$ splits isometrically.
\end{proof}

\begin{prop}
Let $(M,g,f)$ be a normalized shrinking soliton with $|Rm|\leq C$, and let $(M,g(t))$ be its corresponding $(C,\kappa)$-controlled Ricci flow.  Then $\forall x_{k}\rightarrow\infty$ $\exists$ a subsequence $\{x_{n}\}$,  a complete Riemannian manifold $(M_{\infty},g_{\infty},x_{\infty})$ and a $(C,\kappa)$-controlled Ricci flow $(M_{\infty},g_{\infty}(t),x_{\infty}))$ such that $(M,g,x_{n})\stackrel{C^{\infty}- CG}{\rightarrow}(M_{\infty},g_{\infty},x_{\infty}))$, $(M,g(t),x_{n})\stackrel{C^{\infty}- CG}{\rightarrow}(M_{\infty},g_{\infty}(t),x_{\infty}))$, $(M_{\infty},g_{\infty})=(M_{\infty},g_{\infty}(-1))$ and $(M_{\infty},g_{\infty}(t))$ splits $(\mathds{R},ds^{2})\times(N,h(t))$ where $(N,h(t))$ is $(C,\kappa')$-controlled Ricci flow.
\end{prop}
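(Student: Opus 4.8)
The plan is to extract a limiting Ricci flow by Hamilton–Cheeger–Gromov compactness, apply the preceding lemma at the time slice $t=-1$ to see that the limit splits off a line, and then propagate the splitting across the whole interval $(-\infty,0)$ using uniqueness of Ricci flow with bounded curvature.

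First I would recall that by the noncollapsing theorem of Section 3 (Theorem \ref{mthm4}) and the remark following it, the associated Ricci flow $(M,g(t))$ is $(C,\kappa)$-controlled, so it carries the uniform bound $|Rm[g(t)]|\leq C/|t|$ together with all the derivative bounds $|(\partial_t)^{k}\nabla^{l}Rm|\leq C^{k,l}/|t|^{1+k+l/2}$ from the Shi-type lemma of Section 2, as well as a uniform $\kappa$-noncollapsing bound. Fixing a compact subinterval $I\subset(-\infty,0)$ with $-1\in I$, Hamilton's compactness theorem gives, after passing to a subsequence of $\{x_k\}$, a smooth pointed Cheeger–Gromov limit $(M,g(t),x_n)\to(M_\infty,g_\infty(t),x_\infty)$ on $I$; a diagonal argument over an exhaustion of $(-\infty,0)$ by such intervals yields a limit Ricci flow on all of $(-\infty,0)$, and this limit is again $(C,\kappa)$-controlled since all the defining bounds pass to the limit. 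Setting $g_\infty=g_\infty(-1)$ we in particular get $(M,g,x_n)\to(M_\infty,g_\infty,x_\infty)$ in the $C^\infty$ (hence $C^{1,\alpha}$) Cheeger–Gromov sense, and by uniqueness of Cheeger–Gromov limits this is the same limit appearing in the preceding lemma.

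Next I would verify the hypotheses of the preceding lemma for the soliton function $f$. Tracing $Rc+\nabla^{2}f=\lambda g$ gives $\nabla^{2}f=\lambda g-Rc$, which is bounded since the curvature is, so $|\nabla^{2}f|\leq C'$. For the condition $|\nabla f|(x)\to\infty$ as $x\to\infty$, I would combine the normalized identity $R+|\nabla f|^{2}=2\lambda f$ (with $\lambda=\tfrac12$) with the standard fact that $f$ is proper with quadratic growth on a complete shrinking soliton with bounded curvature; since $R$ is bounded this forces $|\nabla f|^{2}\to\infty$. The preceding lemma then applies to the subsequence $\{x_n\}$, so $(M_\infty,g_\infty)$ splits isometrically as $(\mathds{R},ds^{2})\times(N,h)$, where the $\mathds{R}$ factor is spanned by the (parallel) limit of $\nabla f_\infty$.

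Finally I would propagate the splitting from $t=-1$ to all of $(-\infty,0)$. Let $J\subseteq(-\infty,0)$ be the set of times at which $g_\infty(t)$ is a Riemannian product $ds^{2}\times h(t)$ with the $\mathds{R}$ factor the fixed parallel one. Then $-1\in J$, $J$ is closed by continuity, and $J$ is open: near any $t_0\in J$ one solves the Ricci flow $h(t)$ on $N$ starting from $h(t_0)$ (it exists on a short interval with bounded curvature since $(N,h(t_0))$ does), and the product flow $ds^{2}\times h(t)$ is then a complete bounded-curvature Ricci flow agreeing with $g_\infty(t)$ at $t_0$, hence equals $g_\infty(t)$ near $t_0$ by uniqueness (Chen–Zhu). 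So $J=(-\infty,0)$, and $(N,h(t))$ is a Ricci flow on $(-\infty,0)$ with $|Rm[h(t)]|\leq|Rm[g_\infty(t)]|\leq C/|t|$; moreover a curvature-normalized parabolic ball in $N$ thickened by an interval in the $\mathds{R}$ factor is a curvature-normalized parabolic ball in $M_\infty$, so the $\kappa$-noncollapsing of $(M_\infty,g_\infty(t))$ descends to $\kappa'$-noncollapsing of $(N,h(t))$ for some $\kappa'=\kappa'(n,\kappa)$. Thus $(N,h(t))$ is $(C,\kappa')$-controlled, completing the proof.

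I expect the main obstacle to be the two ``soft'' inputs rather than the compactness: knowing that $f$ is proper with quadratic growth (so that $|\nabla f|\to\infty$ and the preceding lemma applies at all), and upgrading a splitting that is a priori available only at the single slice $t=-1$ to the entire flow, which rests essentially on uniqueness of complete Ricci flow with bounded curvature. The extraction of the smooth limit flow is routine once the Shi-type estimates of Section 2 are in hand.
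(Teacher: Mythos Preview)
Your overall strategy---extract a limit by Hamilton compactness, apply the preceding splitting lemma to get a line factor, then use uniqueness to propagate---is exactly the paper's. The verification that $f$ has bounded Hessian and $|\nabla f|\to\infty$ is correct and matches what the paper cites (Lemma~\ref{growth_sol}).

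The one genuine gap is in your open--closed propagation to times $t<-1$. Shi's short-time existence produces $h(t)$ only \emph{forward} from $h(t_{0})$, and Chen--Zhu uniqueness is likewise a forward statement: two complete bounded-curvature flows agreeing at $t_{0}$ agree for $t\geq t_{0}$. So your argument shows that if $t_{0}\in J$ then $[t_{0},t_{0}+\epsilon)\subseteq J$, which together with closedness yields only $[-1,0)\subseteq J$, not $J=(-\infty,0)$. You cannot ``solve the Ricci flow on $N$ starting from $h(t_{0})$'' backward, and you have not produced a candidate product flow on $(t_{0}-\epsilon,t_{0}]$ to feed into any uniqueness theorem.

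The paper handles this by applying the splitting lemma at \emph{every} time slice rather than just at $t=-1$: for each $t$, the pulled-back soliton function $f_{t}=\phi_{t}^{*}f$ on $(M,g(t))$ again has bounded Hessian and $|\nabla f_{t}|_{g(t)}\to\infty$ (same growth estimate, same soliton equation after scaling), so the limit $(M_{\infty},g_{\infty}(t))$ splits off a line for every $t$. With the splitting already in hand at arbitrarily negative $t_{0}$, forward uniqueness from $t_{0}$ then shows the flow is the product $ds^{2}\times h(t)$ on $[t_{0},0)$; since $t_{0}$ is arbitrary this covers all of $(-\infty,0)$ and no backward step is needed. If you prefer to keep the argument based on a single time slice, you would need to invoke backward uniqueness (Kotschwar) rather than Chen--Zhu; that also works, but is a heavier input than the paper uses.
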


\begin{proof}
Let $x_{k}\rightarrow\infty$.  Since $|Rm[g(t)]|\leq \frac{C}{|t|}$, the existence of limit $(M_{\infty},g_{\infty},x_{\infty})$ and $(M_{\infty},g_{\infty}(t),x_{\infty})$ is immediate from compactness and Theorem \ref{mthm3}.  Because of lemma \ref{growth_sol} the soliton functions $f_{t}=\phi^{*}_{t}f$ for $(M,g(t))$ satisfy the conditions of the last lemma, so $(M_{\infty},g_{\infty}(t),x_{\infty})$ splits $\forall$ $t$, and by the existence result of ([Sh]) and the uniqueness result of ([CZ]) so must the Ricci flow.  If $(M_{\infty},g_{\infty}(t))$ is $\kappa$-noncollapsed, then $(N,h(t))$ is $\kappa'$-noncollapsed for $\kappa'$=$\kappa'(\kappa,n)$.
\end{proof}

\begin{prop}
Let $(M,g,f)$ be a normalized shrinking soliton with $|Rm|\leq C$.  Then $\forall$ $\{x_{k}\}\rightarrow\infty$ $\exists$ subsequence $\{x_{n}\}$ and sequences $\{x^{+}_{n}\}, \{x^{-}_{n}\}$ such that $(M,g,x_{n})\rightarrow(\mathds{R},ds^{2})\times(N,h,y)$, $(M,g,x^{-}_{n})\rightarrow(\mathds{R},ds^{2})\times(N^{-},h^{-},y^{-})$, $(M,g,x^{+}_{n})\rightarrow(\mathds{R},ds^{2})\times(N^{+},h^{+},y^{+})$ where $(N,h,y)$ is a $(C,\kappa')$-controlled Ricci flow, and $(N^{-},h^{-},y^{-})$, $(N^{+},h^{+},y^{+})$ are shrinking solitons.  If $(N^{-},h^{-},y^{-})$ and $(N^{+},h^{+},y^{+})$ are isometric, then $(N,h,y)$ is
also isometric to $(N^{\pm},h^{\pm},y^{\pm})$, and hence a shrinking soliton.  If $x_{k}$ are chosen along a unique integral curve $\gamma$ of $f$ then $\{x^{+}_{n}\}, \{x^{-}_{n}\}$ can be chosen along this integral curve as well.
\end{prop}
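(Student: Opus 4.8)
The plan is to feed the Ricci flow obtained at spatial infinity into Theorem \ref{asym_sol} and then to transport its conclusions back to $(M,g)$ by means of the self-similar structure of the soliton. By the preceding proposition we may pass to a subsequence $\{x_n\}$ of $\{x_k\}$ so that $(M,g(t),x_n)\to(M_\infty,g_\infty(t),x_\infty)$ in $C^\infty$ Cheeger--Gromov sense, where $(M_\infty,g_\infty(t))=(\mathds{R},ds^2)\times(N,h(t))$ and $(N,h(t))$ is a $(C,\kappa')$-controlled Ricci flow; under this splitting write $x_\infty=(0,y)$ with $y\in N$.

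First I would apply Theorem \ref{asym_sol} to the controlled flow $(N,h(t))$ with marked point $y$: choosing $\tau^-_i\to 0$ and $\tau^+_i\to\infty$ and rescaling, we obtain, after passing to subsequences, $(N,(\tau^\pm_i)^{-1}h(\tau^\pm_i t),y)\to(N^\pm,h^\pm(t),y^\pm)$, which are $(C,\kappa')$-controlled shrinking solitons normalized at $t=-1$, and which have the additional property that if $N^-$ and $N^+$ are isometric then $(N,h(t))$ is isometric to both and is itself a shrinking soliton. Since the flat line $(\mathds{R},(\tau^\pm_i)^{-1}ds^2)$ is isometric to $(\mathds{R},ds^2)$ (with the basepoint taken at $0$), the parabolic rescalings of $g_\infty$ converge along the same sequences, $(M_\infty,(\tau^\pm_i)^{-1}g_\infty(\tau^\pm_i t),x_\infty)\to(\mathds{R},ds^2)\times(N^\pm,h^\pm(t),y^\pm)$.

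The key step is a diagonal argument. Because parabolic rescaling by a fixed factor commutes with $C^\infty$ Cheeger--Gromov convergence, for each fixed $i$ we have $(M,(\tau^\pm_i)^{-1}g(\tau^\pm_i t),x_n)\to(M_\infty,(\tau^\pm_i)^{-1}g_\infty(\tau^\pm_i t),x_\infty)$ as $n\to\infty$; combining this with the convergence of the right-hand side as $i\to\infty$ we may select $i=i(n)\to\infty$ so that $(M,(\tau^\pm_{i(n)})^{-1}g(\tau^\pm_{i(n)}t),x_n)\to(\mathds{R},ds^2)\times(N^\pm,h^\pm(t),y^\pm)$ as Ricci flows. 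Now the self-similar structure enters: if $\phi_t$ is the family of diffeomorphisms exhibiting the associated Ricci flow (so that $g(t)=(-t)\phi_t^*g$ and $\phi_t$ is generated by a multiple of $\nabla f$), then $\tau^{-1}g(-\tau)=\phi_{-\tau}^*g$, so $(M,\tau^{-1}g(-\tau),x_n)$ is isometric to $(M,g,\phi_{-\tau}(x_n))$. Evaluating the previous convergence at $t=-1$ and setting $x^\pm_n:=\phi_{-\tau^\pm_{i(n)}}(x_n)$ yields $(M,g,x^\pm_n)\to(\mathds{R},ds^2)\times(N^\pm,h^\pm,y^\pm)$, as required; and since $\phi_t$ preserves the integral curves of $\nabla f$, the points $x^\pm_n$ lie on the same integral curve as $x_n$ whenever $\{x_k\}$ was chosen along one, which gives the final assertion.

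The isometry alternative then follows directly: if $(N^-,h^-,y^-)$ and $(N^+,h^+,y^+)$ are isometric, Theorem \ref{asym_sol} applied to $(N,h(t))$ above already gives that $(N,h(t))$ is isometric to $N^\pm$ and is a shrinking soliton, hence so is $(N,h,y)$. I expect the only genuine delicacy to be the bookkeeping in this diagonal/self-similarity step --- arranging a single subsequence along which the spatial escape $x_n\to\infty$, the time-rescaling parameters $\tau^\pm_{i(n)}$, and the basepoint translations $\phi_{-\tau}$ are mutually compatible --- since the analytic content is entirely contained in Theorem \ref{asym_sol} and the splitting proposition.
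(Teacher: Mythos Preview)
Your proposal is correct and follows essentially the same route as the paper: pass to a subsequence via the splitting proposition to obtain the limit $\mathds{R}\times(N,h(t))$, apply the asymptotic-soliton machinery (Theorem~\ref{asym_sol}) to the factor $(N,h(t))$ to produce $N^{\pm}$, run a diagonal argument combining the spatial limit $n\to\infty$ with the time-rescaling $\tau^{\pm}_i$, and then use the self-similarity $\tau^{-1}g(-\tau)=\phi_{-\tau}^{*}g$ to transport the rescaled basepoints $x_n$ to points $x^{\pm}_n=\phi_{-\tau}(x_n)$ in $(M,g(-1))$ lying on the same integral curve. The only cosmetic difference is the direction of the diagonal selection (the paper fixes $\epsilon=1/i$, chooses $k(\epsilon)$, then $n(k)$, whereas you phrase it as choosing $i(n)$), but the content is identical.
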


\begin{proof}
Because the associated Ricci flow of a $\kappa$-noncollapsed shrinking soliton, Theorem \ref{mthm3},  with bounded curvature is $(C,\kappa)$-controlled, we can pass to a subsequence $\{x_{n}\}$ such that $(M,g(t),x_{n})$ converges to a $(C,\kappa)$-controlled Ricci flow $(\mathds{R},ds^{2})\times(N,h(t),y)$.  We can pick $\tau_{k}\rightarrow\infty$ or $\tau_{k}\rightarrow 0$ such that  $(N,\tau^{-1}_{k}h(-\tau_{k}t),y)\rightarrow (N_{\infty},h_{\infty}(t),y_{\infty})$, a shrinking soliton.

Fix $\epsilon=\frac{1}{i}$.  Let $k=k(\epsilon)$ such that $(N,\tau^{-1}_{k}h(-\tau_{k}),y)$ is within $\frac{\epsilon}{2}$ of $(N_{\infty},h_{\infty}(-1),y_{\infty})$ on $B_{1/\epsilon}(y)$ in $C^{1/\epsilon}$.  Let $n=n(k)$ such that $(M,\tau^{-1}_{k}g(-\tau_{k}),x_{n})$ is within $\frac{\epsilon}{2}$ of $(\mathds{R},ds^{2})\times(N,\tau^{-1}_{k}h(-\tau_{k}),y)$ in $B_{\tau^{-1}_{k}g(-\tau_{k})}(x_{n},1/\epsilon)$ in $C^{1/\epsilon}$.  Now let $z_{i}=\phi_{-\tau_{k}}(x_{n})$, where $\phi_{t}$ are the generating diffeomorphisms for the Ricci flow.  Notice that $z_{i}$ and $x_{n}$ lie along the same integral curve of $f$ and that $B_{g(-1)}(z_{i},1/\epsilon)$ is isometric to $B_{\tau^{-1}_{k}g(-\tau_{k})}(x_{n},1/\epsilon)$.  Hence $(M,g,z_{i})$ is within $1/i$ of $(\mathds{R},ds^{2})\times(N_{\infty},h_{\infty},y_{\infty})$ on $B_{g}(z_{i},i)$ in $C^{i}$.  By letting $\tau_{k}$ tend toward zero or infinity we get our two soliton sequences, and it follows from theorem \ref{asym_sol} that if they are equal that $(N,h,y)$ is also a shrinking soliton.
\end{proof}

\begin{cor}\label{asym_int}
Let $(M,g,f)$ be a normalized shrinking soliton with $|Rm|\leq C$.  Let $\gamma$ be any integral curve of $f$ which tends to infinity.  Then there exists $\{x_{n}\}\in Image(\gamma)$ such that $(M,g,x_{n})\rightarrow (\mathds{R},ds^{2})\times(N,h,y)$ where $(N,h,y)$ is a normalized $\kappa'$-noncollapsed shrinking soliton with $|Rm|\leq C$.
\end{cor}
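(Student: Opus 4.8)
The plan is to read off the corollary from the preceding proposition, the only extra work being to check that the cross-sectional soliton produced there inherits the ``controlled'' properties. First, using the hypothesis that $\gamma$ tends to infinity, I would pick a sequence $\{x_{k}\}\in\mathrm{Image}(\gamma)$ with $x_{k}\rightarrow\infty$ and feed it into the preceding proposition. This produces a subsequence $\{x_{n}\}$ together with sequences $\{x^{\pm}_{n}\}$ which, since the $x_{k}$ lie along the integral curve $\gamma$, may themselves be taken in $\mathrm{Image}(\gamma)$, and for which $(M,g,x^{+}_{n})\rightarrow(\mathds{R},ds^{2})\times(N^{+},h^{+},y^{+})$ with $(N^{+},h^{+},y^{+})$ a shrinking soliton. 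I would then just relabel and take $\{x_{n}\}:=\{x^{+}_{n}\}$; what remains is to verify that $(N^{+},h^{+},y^{+})$ is normalized, $\kappa'$-noncollapsed, and satisfies $|Rm|\leq C$.

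For that I would trace through how $(N^{+},h^{+},y^{+})$ is obtained in the proof of the preceding proposition. The associated Ricci flow of the normalized bounded-curvature soliton $(M,g,f)$ is $(C,\kappa)$-controlled; passing to a Cheeger--Gromov limit at spatial infinity and then splitting off the $\mathds{R}$-factor yields the $(C,\kappa')$-controlled Ricci flow $(N,h(t))$ (the curvature bound passes to the limit, and the noncollapsing constant degrades to $\kappa'=\kappa'(\kappa,n)$ under the splitting). The soliton $(N^{+},h^{+},y^{+})$ then arises as an asymptotic soliton of $(N,h(t))$, namely as a limit of $(N,\tau_{k}^{-1}h(-\tau_{k}t),y)$ with $\tau_{k}\rightarrow 0$ (or $\tau_{k}\rightarrow\infty$); this is exactly the construction of Theorem \ref{asym_sol} applied to $(N,h(t))$. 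Theorem \ref{asym_sol} guarantees its asymptotic solitons are $(C,\kappa')$-controlled and normalized at $t=-1$, so $h^{+}=h^{+}(-1)$ is a normalized soliton metric, $\kappa'$-noncollapsed, with $|Rm[h^{+}(-1)]|\leq C$. This supplies all the missing properties, and the identical reasoning applies to $\{x^{-}_{n}\}$.

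No new analytic ingredient is needed: the curvature bound, the splitting lemma, and the existence and normalization of asymptotic solitons are all already established. The only point requiring care --- and hence the main ``obstacle'' --- is the bookkeeping in the diagonalized double limit of the preceding proposition (first $x_{k}\rightarrow\infty$, then $\tau_{k}\rightarrow 0$ or $\infty$, with approximating points $z_{i}=\phi_{-\tau_{k}}(x_{n})$ kept on $\gamma$): one must confirm that the resulting soliton is genuinely an instance of Theorem \ref{asym_sol}'s construction applied to the intermediate flow $(N,h(t))$, and not merely some abstract shrinking soliton, so that the normalization and the $(C,\kappa')$-control are legitimately inherited.
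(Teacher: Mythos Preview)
Your proposal is correct and matches the paper's intent: the corollary is stated there without proof precisely because it is read off from the preceding proposition together with Theorem~\ref{asym_sol}, and you have filled in exactly those details. The one point you flag as needing care --- that $(N^{+},h^{+})$ is genuinely produced by Theorem~\ref{asym_sol} applied to the intermediate $(C,\kappa')$-controlled flow $(N,h(t))$, and hence inherits normalization and the $(C,\kappa')$-control --- is indeed the only thing to check, and your verification of it is correct.
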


\begin{remark}
With a little more work one can show the shrinking soliton above is uniquely defined by the integral curve $\gamma$.  However as remark (\ref{counter}) shows the limit soliton can vary from integral curve to integral curve.
\end{remark}

\section{Geometry of Level Sets}

The following lemma can be found in the unpublished work by the author \cite{N}.  Similar estimates are also in \cite{MT}.

\begin{lem}\label{growth_sol}
Let $(M,g,f)$ be a complete soliton with $|Rc|\leq C$ and let $p\in M$.  Then there exists $a=a(n,C,|\nabla f|(p))$ and $b=b(n,C,|f|(p))$ such that $\forall x\in M$ $|\nabla f|(x)\geq <\nabla f, \nabla d>\geq \lambda d(x,p)+a$ and $f(x)\geq \frac{\lambda}{2}d(x,p)^{2}+ad(x,p)+b$.
\end{lem}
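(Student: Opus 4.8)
The plan is to integrate the soliton equation along a minimizing geodesic. Fix $p\in M$ and let $x\in M$ with $d=d(x,p)$; let $\sigma(s):[0,d]\to M$ be a unit-speed minimizing geodesic from $p$ to $x$. The soliton equation $Rc+\nabla^2 f=\lambda g$ evaluated on $(\dot\sigma,\dot\sigma)$ gives $\nabla^2 f(\dot\sigma,\dot\sigma)=\lambda-Rc(\dot\sigma,\dot\sigma)$. Since $\sigma$ is a geodesic, $\frac{d^2}{ds^2}(f\circ\sigma)=\nabla^2 f(\dot\sigma,\dot\sigma)$, so $\frac{d}{ds}\langle\nabla f,\dot\sigma\rangle(\sigma(s))=\lambda-Rc(\dot\sigma,\dot\sigma)\geq\lambda-C$. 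Hmm — but that only gives a linear-in-$C$ error, not the clean bound claimed. The trick that produces the stated estimate (error absorbed into the constants $a,b$, not growing with $d$) must use the second-variation argument already established: by the Lemma on page preceding, for a unit-speed minimizing geodesic in a $(C,\kappa)$-controlled flow one has $\int_\sigma Rc(\dot\sigma,\dot\sigma)\leq A/\sqrt{|T-\tau|}$, i.e. the Ricci curvature integrated along a minimizing geodesic is bounded by a constant \emph{independent of the length} $d$. Here on the fixed time slice of the soliton (which is the $(C,\kappa)$-controlled Ricci flow at $t=-1$), the same second-variation estimate applies and yields $\int_0^d Rc(\dot\sigma,\dot\sigma)\,ds\leq A$ for $A=A(n,C)$.

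Granting that, I would proceed as follows. First, integrating $\frac{d}{ds}\langle\nabla f,\dot\sigma\rangle=\lambda-Rc(\dot\sigma,\dot\sigma)$ from $0$ to $r\leq d$ gives
\[
\langle\nabla f,\dot\sigma\rangle(\sigma(r))=\langle\nabla f,\dot\sigma\rangle(\sigma(0))+\lambda r-\int_0^r Rc(\dot\sigma,\dot\sigma)\,ds\geq -|\nabla f|(p)+\lambda r-A,
\]
using the second-variation bound (note that for $r\leq d$ the restriction $\sigma|_{[0,r]}$ is still minimizing, so the bound applies with the same $A$). Setting $a:=-|\nabla f|(p)-A$ this reads $\langle\nabla f,\dot\sigma\rangle(\sigma(r))\geq\lambda r+a$, which at $r=d$ is the first claimed inequality, since $\dot\sigma(d)=\nabla d$ at $x$ (for $x$ outside the cut locus; the general case follows by taking $p$-to-$x$ minimizing geodesics and the standard approximation, or simply noting $\langle\nabla f,\nabla d\rangle$ is defined a.e. and the inequality holds in the support/barrier sense) and $|\nabla f|\geq\langle\nabla f,\nabla d\rangle$ trivially. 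Second, integrate once more: $\frac{d}{dr}(f\circ\sigma)(r)=\langle\nabla f,\dot\sigma\rangle(\sigma(r))\geq\lambda r+a$, so
\[
f(x)=f(\sigma(d))=f(p)+\int_0^d\frac{d}{dr}(f\circ\sigma)\,dr\geq f(p)+\frac{\lambda}{2}d^2+ad,
\]
and with $b:=f(p)=f(p)$ — wait, we need $b=b(n,C,|f|(p))$ and this is just $f(p)\geq -|f|(p)$, so $b:=-|f|(p)$ works after replacing $f(p)$ by its lower bound. This gives the second inequality.

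The main obstacle, and the point that needs care, is establishing the length-independent bound $\int_\sigma Rc(\dot\sigma,\dot\sigma)\leq A(n,C)$ on a minimizing geodesic in the fixed soliton metric. The hypothesis here is only $|Rc|\leq C$ (not the full $(C,\kappa)$-controlled structure, and not bounded curvature operator), so I must verify that the earlier second-variation lemma — whose proof used only the pointwise curvature bound $|Rm|\le \tilde C/s$ near a basepoint via the Jacobi-field test function $h$ — goes through with just a pointwise Ricci-type input. Re-reading that proof: it sums the index form over an orthonormal frame, producing exactly $\int_\sigma Rc(\dot\sigma,\dot\sigma)\leq (n-1)\int(h')^2+\int(1-h^2)Rc(\dot\sigma,\dot\sigma)$, and then bounds the two short end-segments using $|Rc|\leq$ const there. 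On a fixed metric with a global bound $|Rc|\leq C$, the cutoff $h$ can be taken to transition over unit-length segments at each end, giving $\int_\sigma Rc(\dot\sigma,\dot\sigma)\leq (n-1)\cdot 2+2C=:A(n,C)$, independent of $d$ — the interior portion where $h\equiv 1$ contributes nothing because there $1-h^2=0$. So the argument is self-contained modulo citing this local second-variation computation, and the only genuinely delicate bookkeeping is handling the cut locus / barrier sense of $\nabla d$, which is routine and can be dispatched with a sentence invoking Calabi's trick or by restricting to $x$ outside the cut locus of $p$ and using density.
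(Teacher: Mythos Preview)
Your proposal is correct and follows essentially the same route as the paper: both arguments integrate the soliton equation along a minimizing geodesic and use the second-variation-of-energy trick with the piecewise-linear cutoff $h$ (unit-length ramps at the ends, $h\equiv 1$ in the middle) to obtain the length-independent bound $\int_{0}^{d}Rc(\dot\sigma,\dot\sigma)\leq 2(n-1)+2C$, then integrate once more for the $f$ estimate. The only minor omission is the case $d<2$ (where the unit ramps do not fit), which the paper handles separately by the trivial bound $\int_{0}^{d}Rc(\dot\sigma,\dot\sigma)\leq 2C$; this is easily absorbed into your constant $A$.
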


\begin{proof}
Let $\gamma:[0,d]\rightarrow M$ be a minimizing unit speed geodesic between $p$ and $x$.

First assume $d\geq 2$.  Let $E^{i}(p)$ be an orthonormal basis at $p$ with $E^{n} = \dot{\gamma}$.  Define $E^{i}(t)$ as the parallel transport of $E^{i}$ over $\gamma(t)$.  Let $h:[0,d]\rightarrow \mathds{R}$ be Lipschitz with $h(0)=h(d)=0$ and $Y^{i}(t)=h(t)E^{i}(t)$.  Since $\gamma$ is a minimizing geodesic we have by the second variation formula that

\[
0\leq \int_{0}^{d}|\nabla_{\dot{\gamma}}Y^{i}|^{2} - <R(Y^{i},\dot{\gamma})\dot{\gamma},Y^{i}>dt
\]

\[
=\int_{0}^{d}(h')^{2}|E^{i}|^{2}+h^{2}
|\nabla_{\dot{\gamma}}E^{i}|^{2}-h^{2}<R(E^{i},\dot{\gamma})\dot{\gamma},E^{i}>dt
\]

\[
=\int_{0}^{d}(h')^{2}-h^{2}<R(E^{i},\dot{\gamma})\dot{\gamma},E^{i}>dt
.
\]

Summing yields

\[
\int_{0}^{d}Rc(\dot{\gamma},\dot{\gamma}) \leq (n-1)\int_{0}^{d}(h')^{2} + \int_{0}^{d}(1-h^{2})Rc(\dot{\gamma},\dot{\gamma})
\]

Using the soliton equation and plugging in $$h(t) = \left\{\begin{array}{lr}
t &   0\leq t  \leq 1\\
1 &   1\leq t  \leq d-1\\
d-t & d-1\leq t\leq d
\end{array}\right.
 $$ we get

\[
\lambda d(x,p)-(\nabla_{\dot{\gamma}}f(\gamma(d))-\nabla_{\dot{\gamma}}f(p)) \leq 2(n-1)+2C
\]

If $d\leq 2$ then $\int_{0}^{d}Rc(\dot{\gamma},\dot{\gamma})\leq 2C$, plugging in the soliton equation yields similar estimate.  Integration over $\gamma$ yields estimate for $f$.
\end{proof}

\begin{remark}\label{nongrad_est}
The verbatim argument works if we assume $Rc+\frac{1}{2}\mathcal{L}_{X}g \geq \lambda g$ for a complete vector field $X$ to give us $<X,\nabla d> \geq \lambda d(x,p) + a$.
\end{remark}

\begin{cor}
Let $(M,g,f)$ be a complete noncompact shrinking soliton with $|Rc|\leq C$.  Then $f$ grows quadratically, is bounded below and is proper.  Outside some compact subset $f$ has no critical points.
\end{cor}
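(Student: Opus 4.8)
The plan is to derive everything from Lemma \ref{growth_sol}, adding only one short computation for the upper bound. Since the soliton is shrinking we have $\lambda>0$ (and after normalization $\lambda=\tfrac12$, though any $\lambda>0$ works). Fix a basepoint $p\in M$.

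First I would invoke the lower bound of Lemma \ref{growth_sol}: there are constants $a,b$ depending only on $n$, $C$, $|\nabla f|(p)$, $|f|(p)$ with $f(x)\ge\tfrac{\lambda}{2}d(x,p)^{2}+a\,d(x,p)+b$ for all $x$. As $\lambda>0$, the right side is a quadratic in $d(x,p)$ with positive leading coefficient; hence $f$ is bounded below and $f(x)\to\infty$ as $x\to\infty$. Consequently each sublevel set $\{f\le c\}$ lies in a ball $B_{R(c)}(p)$, so is closed and bounded, hence compact by completeness (Hopf--Rinow); that is, $f$ is proper.

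For the matching quadratic upper bound I would argue directly: let $\gamma:[0,d]\to M$ be a unit-speed minimizing geodesic from $p$ to $x$ with $d=d(x,p)$, and set $\psi(t)=f(\gamma(t))$. Then $\psi'(t)=\langle\nabla f,\dot\gamma\rangle$ and, using $\nabla^{2}f=\lambda g-Rc$,
\[
\psi''(t)=\nabla^{2}f(\dot\gamma,\dot\gamma)=\lambda-Rc(\dot\gamma,\dot\gamma)\le\lambda+C .
\]
Integrating twice yields $f(x)=\psi(d)\le f(p)+|\nabla f|(p)\,d+\tfrac{\lambda+C}{2}d^{2}$, a quadratic upper bound in $d(x,p)$. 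Together with the lower bound this shows $f$ grows quadratically.

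Finally, the absence of critical points near infinity is immediate from the gradient estimate in Lemma \ref{growth_sol}: $|\nabla f|(x)\ge\langle\nabla f,\nabla d\rangle\ge\lambda d(x,p)+a$, which is strictly positive once $d(x,p)>\max(0,-a)/\lambda$; hence $f$ has no critical point outside a suitable compact ball around $p$. I do not expect any real obstacle here: the substantive work lives in Lemma \ref{growth_sol}, and the only point not handed to us directly is the quadratic upper bound, which the two-line second-derivative computation above supplies.
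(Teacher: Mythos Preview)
Your proposal is correct and is essentially what the paper intends: the corollary is stated without proof, as an immediate consequence of Lemma~\ref{growth_sol}. You supply exactly the details one would fill in---lower quadratic growth and the gradient bound from the lemma give boundedness below, properness, and absence of critical points at infinity---and your short second-derivative computation for the upper quadratic bound (which the paper leaves implicit in the phrase ``grows quadratically'') is correct and standard.
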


\begin{remark}\label{ffg}
In fact all that was necessary for the above growth estimates on $f$ is in the inequality $Rc+\nabla^{2}f \geq \lambda g$.  If $(M,g,f)$ satisfies such a condition with $|Rc|\leq C$ and $\lambda>0$ it follows immediately that $\int_{M}e^{-f}dv_{g}$ is finite and by lifting to the universal cover we see $(M,g)$ must have finite fundamental group.  For more on the fundamental group of such spaces also see ([N],[W]).
\end{remark}

\begin{definition}
Let $(M,g,f)$ be a normalized shrinking soliton with bounded Ricci curvature.  As a consequence of the last corollary, outside a large compact set all the level sets of the soliton function $f$ are compact, smooth diffeomorphic manifolds. Let $r_{f}$ the inf of all such $r$ for which this holds.  For each $s>r_{f}$ let $N_{s}=f^{-1}(s)$ be the soliton hypersurface.
\end{definition}

\begin{prop}
Let $(M,g,f)$ normalized shrinking soliton with $|Rm|\leq C$. Assume either $n=3$ and $Rc\geq 0$ or $n=4$ and $sec\geq 0$.  Then for $s>r_{f}$ we have that the mean curvature of $N_{s}$ is nonnegative, and positive if $Rc>0$.
\end{prop}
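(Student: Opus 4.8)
The plan is to compute the mean curvature of the level set $N_s = f^{-1}(s)$ directly in terms of the soliton data and then invoke the curvature hypotheses to get the sign. The unit normal to $N_s$ is $\nu = \nabla f/|\nabla f|$, which is well-defined for $s > r_f$ since $f$ has no critical points there. The mean curvature (with respect to $\nu$) is $H = \operatorname{div}_{N_s}\nu = \triangle f/|\nabla f| - \nabla^2 f(\nu,\nu)/|\nabla f|$. Using the normalized soliton equations $R + \triangle f = n\lambda$ (so $\triangle f = n\lambda - R$) and the tensorial soliton equation $\nabla^2 f = \lambda g - Rc$, we get
\[
H|\nabla f| = (n\lambda - R) - \bigl(\lambda - Rc(\nu,\nu)\bigr) = (n-1)\lambda - R + Rc(\nu,\nu).
\]
So the whole problem reduces to showing $R - Rc(\nu,\nu) \le (n-1)\lambda$ (with strict inequality when $Rc>0$), i.e., that the ``tangential scalar curvature'' $R - Rc(\nu,\nu) = \sum_{i} Rc(e_i,e_i)$ over an orthonormal frame $\{e_i\}$ of $T N_s$ is bounded above by $(n-1)\lambda$.

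The bound on $R - Rc(\nu,\nu)$ is where the curvature hypothesis enters, and this is the main obstacle — it must be handled separately in the two cases. In dimension $n=3$ with $Rc\ge 0$: the tangential part of the Ricci curvature is a sum of two eigenvalue-type quantities, and one wants $\lambda_1 + \lambda_2 \le 2\lambda$ where the $\lambda_i$ are appropriate sectional curvatures; since $R = 2(\text{sum of sectional curvatures})$ and in dimension three the Ricci eigenvalues are sums of pairs of sectional curvatures, one can play off $R \le 3\lambda \cdot(\text{something})$ type relations — more precisely, trace the soliton equation against $\nu$ twice and use the scalar equation $R + |\nabla f|^2 = 2\lambda f$ (or the normalized variant) together with $Rc\ge 0$ to control $Rc(\nu,\nu)$ from below, forcing the complementary piece down. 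In dimension $n=4$ with $sec\ge 0$: here $R - Rc(\nu,\nu)$ is twice the sum of the three sectional curvatures of $2$-planes inside $T N_s$, and one needs this $\le 3\lambda$; the constraint comes from the fact that $R$ is the sum of all six sectional curvatures and each is nonnegative, so the three ``tangential'' ones sum to at most $R/2 \le (3/2)\lambda\cdot(\ldots)$ after using a scalar curvature bound. In both cases one needs an \emph{a priori} upper bound $R \le 2(n-1)\lambda$ or similar; this should follow from the normalized scalar equation $R + |\nabla f|^2 - 2\lambda f = 0$ combined with the quadratic growth of $f$ from Lemma \ref{growth_sol}, or more simply from the known fact $0 \le R \le n\lambda$ on shrinking solitons with $Rc\ge 0$ (which itself follows from the evolution equation $\partial_t R = \triangle R + 2|Rc|^2$ and the maximum principle, or from tracing the Hamilton-type identity $\triangle_f R = R - 2|Rc|^2 \le R$ giving $R \le $ its boundary values).

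Concretely, I would organize the argument as: (1) record $H|\nabla f| = (n-1)\lambda - R + Rc(\nu,\nu)$ from the two soliton trace identities; (2) establish the pointwise inequality $R - Rc(\nu,\nu) \le (n-1)\lambda$ using the curvature assumption — in $n=3$ noting that $R - Rc(\nu,\nu)$ is a sum of two of the three ``coordinate'' Ricci-type terms and bounding it via $R \le (\text{const})\lambda$ plus $Rc(\nu,\nu) \ge 0$; in $n=4$ noting $R - Rc(\nu,\nu) = 2(K_{12}+K_{13}+K_{23})$ for an orthonormal frame diagonalizing appropriately and $R \ge 2(K_{12}+K_{13}+K_{23}) + 2(K_{14}+K_{24}+K_{34})$ with the latter sum $\ge 0$; (3) conclude $H \ge 0$, and observe that equality in step (2) forces certain curvatures to vanish, so if $Rc > 0$ strictly then $H > 0$. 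The delicate point — and the one I'd expect to require the most care — is getting the sharp constant $(n-1)\lambda$ in step (2) rather than something weaker; this likely needs the precise normalization $\triangle f - |\nabla f|^2 + 2\lambda f = n\lambda$ and possibly an integration or maximum-principle argument along the integral curves of $\nabla f$ rather than a purely pointwise algebraic manipulation.
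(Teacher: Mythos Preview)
Your computation of the mean curvature formula $H|\nabla f| = (n-1)\lambda - R + Rc(\nu,\nu)$ is correct, and after dropping $Rc(\nu,\nu)\ge 0$ the whole question does reduce to the pointwise bound $R \le (n-1)\lambda$. The gap is that none of your proposed mechanisms for this bound actually produce the constant $(n-1)\lambda$. The standard soliton inequality is only $R \le n\lambda$ (and on the round sphere $R = n\lambda$, so $(n-1)\lambda$ is genuinely sharper and cannot hold on an arbitrary shrinker); the identity $\triangle_f R = 2\lambda R - 2|Rc|^2$ at a scalar curvature maximum gives a \emph{lower} bound on $R$, not an upper bound; and your $n=4$ sectional decomposition is miscomputed --- one has $R - 2Rc(\nu,\nu) = 2(K_{12}+K_{13}+K_{23})$, not $R - Rc(\nu,\nu)$, so the inequality you write down collapses to $Rc(\nu,\nu)\ge 0$ and gives nothing new.

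The paper obtains $R \le (n-1)\lambda$ by a completely different route: it uses the asymptotic soliton at infinity. Since $\nabla_{\nabla f} R = 2Rc(\nabla f,\nabla f) \ge 0$, the scalar curvature is nondecreasing along each outgoing integral curve of $\nabla f$. By Corollary~\ref{asym_int} one can choose points $x_n$ along that curve so that $(M,g,x_n)\to \mathds{R}\times (N,h)$ with $(N,h)$ a normalized $(n-1)$-dimensional shrinker with nonnegative curvature; by the two-dimensional classification (when $n=3$) or Perelman's three-dimensional classification under $sec\ge 0$ (when $n=4$), this limit has scalar curvature exactly $(n-1)\lambda$. Monotonicity then forces $R(y) \le (n-1)\lambda$ at every point $y$ on the curve, with strict inequality when $Rc>0$. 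This use of the asymptotic structure developed in Section~4 is the missing idea in your proposal; a purely local or algebraic argument of the kind you sketch does not seem to reach the sharp constant.
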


\begin{proof}
If $(M,g)$ is flat we are done, so we can assume otherwise.

Let $N=\frac{\nabla f}{|\nabla f|}$ be the unit normal at each point on $N_{s}$.  Recall that $\nabla_{N}R = 2Rc(N,\nabla f) \geq 0$ ($>0$ if $Rc>0$).  Let $y\in N_{s}$.  Then $\exists$ a unique integral curve $\gamma$ through $y$ which tends to infinity.  By Corollary \ref{asym_int} let $x_{n}\in\gamma\rightarrow\infty$ such that $(M,g,x_{n})\rightarrow (\mathds{R},ds^{2})\times(N,h,z)$ with $(N,h,z)$ a normalized $\kappa'$-noncollapsed shrinking soliton with $|Rm|\leq C$.

If $n=3$ and $Rc\geq 0$ then $(N,h)$ must be a quotient of the two-sphere and hence $lim R(x_{n}) = 2\lambda$ and so $R(y)\leq 2\lambda$ (strict if $Rc>0$).  If $n=4$ and $sec\geq 0$ then it follows $N$ is either a finite quotient of $S^{3}$ or $S^{2}\times\mathds{R}$, by Perelman's theorem, and so similarly $R(y)\leq 3\lambda$ (strict if $Rc>0$).

Now let $E^{i}$ orthonormal basis for the tangent space of $N_{s}$ at $y$.  Then the mean curvature satisfies

\[
H = \frac{\Sigma \nabla^{2}_{i,i}f}{|\nabla f|} = \frac{\triangle f - \nabla^{2}_{N,N}f}{|\nabla f|} = \frac{n\lambda-R-\lambda+Rc(N,N)}{|\nabla f|}
\]

\[
\geq  \frac{(n-1)\lambda-R}{|\nabla f|} \geq 0
\]

in either of our cases (strict if $Rc>0$).

\end{proof}

\begin{remark}
We will replace the $sec\geq 0$ with $Rc\geq 0$ in the $n=4$ case shortly.
\end{remark}

We get as a corollary
\begin{cor}\label{cor_mthm2}
Let $(M^{3},g,f)$ be a normalized shrinking soliton with $|Rm|\leq C$, $Rc\geq 0$.  Then $(M,g)$ is isometric to a finite quotient of $\mathds{R}\times S^{2}$, $\mathds{R}^{3}$, or $S^{3}$.
\end{cor}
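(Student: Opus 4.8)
The plan is to extract this from the mean-curvature estimate of the preceding proposition together with Hamilton's strong maximum principle and the asymptotic-soliton machinery of Sections 2--4. Throughout I use that, by Theorem \ref{mthm4} and the bounded curvature, the associated Ricci flow of $(M,g,f)$ is $(C,\kappa)$-controlled, so all the structural results of those sections apply.

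I would begin with the flat case. If $(M,g)$ is flat then $Rc\equiv 0$, so the soliton equation forces $\nabla^{2}f=\lambda g$ with $\lambda>0$; thus $f$ is a strictly convex proper exhaustion, which on a complete flat manifold is possible only on $\mathds{R}^{3}$ itself (the Gaussian soliton). So assume $(M,g)$ is not flat. Since $n=3$ and $Rc\geq 0$, the strong maximum principle for the scalar curvature along the associated Ricci flow gives $R>0$ everywhere, and then Hamilton's strong maximum principle for the Ricci tensor (valid in dimension three) yields a dichotomy: either $Rc>0$ everywhere, or the null distribution of $Rc$ is nontrivial and parallel. In the second case $(M,g)$ splits isometrically, after passing to the universal cover, as $(\mathds{R},ds^{2})\times(\Sigma^{2},h)$ where $(\Sigma^{2},h)$ is a complete bounded-curvature shrinking two-soliton with $Rc\geq 0$; since $M$ is not flat, neither is $\Sigma$, so $\Sigma$ is the round two-sphere, and by Remark \ref{ffg} $\pi_{1}(M)$ is finite, so $(M,g)$ is a finite quotient of $\mathds{R}\times S^{2}$.

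It remains to treat the case $Rc>0$ everywhere. If $M$ is compact, a compact three-dimensional shrinking soliton with positive Ricci curvature has constant sectional curvature (it is a self-similar solution of a Ricci flow which, by Hamilton's theorem, converges to constant curvature), hence $M=S^{3}/\Gamma$. The case that $M$ is noncompact with $Rc>0$ is the one I expect to require real work, and the goal is to show it cannot occur. By the preceding proposition every soliton hypersurface $N_{s}$, $s>r_{f}$, has strictly positive mean curvature $H>0$, and one has $R<2\lambda$ on $N_{s}$; writing $N=\nabla f/|\nabla f|$, the identity $H=\big(2\lambda-R+Rc(N,N)\big)/|\nabla f|$ from the proof of that proposition rewrites the defect $2\lambda-R$ in terms of $H$ and $Rc(N,N)$. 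By Corollary \ref{asym_int} --- and the identification carried out in the proof of the preceding proposition --- along any integral curve of $f$ there is a sequence of points at which $(M,g)$ converges in the pointed sense to $(\mathds{R},ds^{2})\times S^{2}_{\mathrm{round}}$, so along such a curve $R\to 2\lambda$ and $Rc(N,N)\to 0$, hence $H\to 0$ at infinity. The intended conclusion is that this forces $H\equiv 0$: once $H\equiv 0$, the identity above gives $R\equiv 2\lambda$ and $Rc(N,N)\equiv 0$ on every $N_{s}$, so $\nabla f$ is a null direction of $Rc$ outside a compact set and hence everywhere by real-analyticity of the soliton, contradicting $Rc>0$.

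The main obstacle is exactly this last implication, promoting the decay of $H$ at infinity to its identical vanishing. I expect to handle it either through a differential inequality for $H$ --- or for the function $(n-1)\lambda-R+Rc(N,N)$ --- on $M$ to which the maximum principle applies, or through a Gauss--Bonnet argument on the spherical level sets $N_{s}$, using the first variation $\frac{d}{ds}\mathrm{Area}(N_{s})=\int_{N_{s}}H/|\nabla f|\,d\sigma>0$ together with the bounds $R<2\lambda$, $Rc(N,N)>0$, $H>0$ to contradict the convergence of $\mathrm{Area}(N_{s})$ to the area of the round cross-section at infinity. Everything else is bookkeeping with the structural results already proved. Assembling the cases, $(M,g)$ is isometric to $\mathds{R}^{3}$, to $S^{3}/\Gamma$, or to a finite quotient of $\mathds{R}\times S^{2}$.
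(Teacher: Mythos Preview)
Your case analysis and the reduction to ``$Rc>0$ and $M$ noncompact is impossible'' match the paper exactly, and you correctly flag that this is where the work lies. The paper also uses the Gauss--Bonnet route you list as option (b), so you are pointed at the right tool; but the specific mechanism you describe is not the one that closes the argument, and your option (a) is a detour.

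The paper does \emph{not} try to upgrade $H\to 0$ to $H\equiv 0$. There is no evident differential inequality on $H$ (or on $(n-1)\lambda-R+Rc(N,N)$) that would give this, and your contradiction from $H\equiv 0$ is never reached. Instead, the paper uses $H>0$ only to conclude that $\mathrm{Area}(N_{s})$ is strictly increasing, hence strictly less than the area of the round limiting $S^{2}$, and then runs a Gauss--Bonnet contradiction on a \emph{fixed} level set $N_{s}$ for $s$ large. The missing ingredient in your sketch is the Gauss equation computation of the \emph{intrinsic} scalar curvature of $N_{s}$: writing $N=\nabla f/|\nabla f|$ and diagonalizing $\nabla^{2}f|_{TN_{s}}$, one gets
\[
R_{N_{s}} \;=\; R - 2Rc(N,N) + \frac{1}{|\nabla f|^{2}}\Big((\triangle f - \nabla^{2}_{N,N}f)^{2} - |\nabla^{2}f|^{2} + (\nabla^{2}_{N,N}f)^{2}\Big),
\]
and using $R<2\lambda$, $Rc(N,N)>0$, $H>0$ bounded, and $|\nabla f|\to\infty$ one obtains $R_{N_{s}}<2\lambda$ for $s$ large. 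Since $N_{s}$ is a topological $2$-sphere, Gauss--Bonnet gives $\int_{N_{s}}R_{N_{s}}=8\pi$, while $R_{N_{s}}<2\lambda$ together with $\mathrm{Area}(N_{s})<\mathrm{Area}(S^{2}_{R=2\lambda})=4\pi/\lambda$ forces $\int_{N_{s}}R_{N_{s}}<8\pi$, a contradiction. This is the step you should replace your ``promote $H\to 0$ to $H\equiv 0$'' plan with.
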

\begin{remark}
This finishes the proof of Theorem \ref{mthm2}.
\end{remark}

\begin{proof}
If $(M^{3},g)$ is compact then it follows from Hamilton that $(M,g)$ is a finite quotient of the three sphere.  So we may assume noncompact and nonflat.

If $Rc$ has zeros then it follows from Hamilton's maximum principle ([H]), since $dim(M)=3$, that after passing to a finite cover they split off, and hence the result.  We wish to show $Rc>0$ is not possible. Let $x_{n}\rightarrow\infty$ along an integral curve of $f$ such that $(M,g,x_{n})\rightarrow (\mathds{R},ds^{2})\times(N,h,y)$, where $(N,h,y)$ is a nonflat shrinking soliton and hence is a quotient of the 2-sphere $S^{2}$.  Let $x_{n}\in f^{-1}(s_{n})$.  Then by the last proposition we see $Vol(N_{s_{n}})<Vol(N)$.  Now let $z\in N_{s_{n}}$, and let $Y^{i}$ orthonormal diagonalizing basis for $\nabla^{2}f|_{TN_{s_{n}}}$ and $N=\frac{\nabla f}{|\nabla f|}$ be the normal unit vector.  By the Gauss equation we see

\[
sec^{N_{s_{n}}}(Y^{i},Y^{j}) = sec(Y^{i},Y^{j})+\frac{1}{|\nabla f|^{2}}(\nabla_{i,i}f\cdot\nabla_{j,j}f)
\]
\[
\Rightarrow R^{_{s_{n}}}_{ii}=R_{ii}-sec(Y^{i},N) + \frac{1}{|\nabla f|^{2}}(\nabla_{i,i}f(\triangle f - \nabla^{2}_{N,N}f-\nabla^{2}_{i,i}f))
\]
\[
\Rightarrow R_{N_{s}} = R-2R_{NN} + \frac{1}{|\nabla f|^{2}}((\triangle f - \nabla^{2}_{N,N}f)^{2}-|\nabla^{2}f|^{2}+(\nabla^{2}_{N,N}f)^{2})
\]
\[
\leq 3\lambda - \triangle f - (\lambda - \nabla^{2}_{N,N}f) - R_{N,N}+\frac{H}{|\nabla f|}(\triangle f - \nabla^{2}_{N,N}f)
\]

since $H\geq 0$ ($\Rightarrow \triangle f - \nabla^{2}_{N N}f\geq 0$) and bounded we can let $s$ large enough that $\frac{H}{|\nabla f|}\leq 1$.  Then we get

\[
\leq 2\lambda -R_{N,N} < 2\lambda
\]

and hence this contradicts the Gauss-Bonnet.
\end{proof}

\begin{cor}
We can replace $sec\geq 0$ with $Rc\geq 0$ for $n=4$ in the previous proposition.
\end{cor}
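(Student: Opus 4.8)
The plan is to notice that, in the $n=4$ case, the proof of the previous proposition invoked the hypothesis $sec\geq 0$ at exactly one point, and only there: to classify via Perelman's theorem the three-dimensional asymptotic shrinking soliton $(N,h,z)$ appearing as a splitting factor of a Cheeger--Gromov limit $(M,g,x_{n})\to(\mathds{R},ds^{2})\times(N,h,z)$ taken along an integral curve of $f$ through a point $y\in N_{s}$. Everything else in that argument — the splitting at infinity supplied by Corollary \ref{asym_int}, the monotonicity of $R$ along the integral curve, and the mean curvature computation — is insensitive to whether one assumes $sec\geq 0$ or merely $Rc\geq 0$. So the first step is to replace the appeal to Perelman's theorem by an appeal to Theorem \ref{mthm2} (equivalently Corollary \ref{cor_mthm2}), which classifies three-dimensional shrinking solitons of bounded curvature under the sole assumption $Rc\geq 0$.

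To justify this substitution I would check that $(N,h,z)$ meets the hypotheses of Theorem \ref{mthm2}. By Corollary \ref{asym_int} it is already a normalized $\kappa'$-noncollapsed shrinking soliton with $|Rm|\leq C$, so the only new point is $Rc_{N}\geq 0$. This is immediate: $Rc\geq 0$ on $(M,g)$ passes to the $C^{\infty}$ Cheeger--Gromov limit, giving $Rc\geq 0$ on the product $(\mathds{R},ds^{2})\times(N,h)$, and the Ricci tensor of a Riemannian product restricted to the $N$-factor is exactly $Rc_{N}$. Hence Theorem \ref{mthm2} applies and $(N,h)$ is isometric to $\mathds{R}^{3}$, a finite quotient of $S^{3}$, or a finite quotient of $\mathds{R}\times S^{2}$. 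In each of these three cases the scalar curvature of $N$, which equals the scalar curvature of the product $\mathds{R}\times N$ and hence equals $\lim_{n}R(x_{n})$, is at most $(n-1)\lambda=3\lambda$ — it is $0$, $3\lambda$, and $2\lambda$ respectively.

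With this established, the remainder is verbatim as in the $sec\geq 0$ case. Since $Rc\geq 0$ we have $\nabla_{N}R=2Rc(N,\nabla f)\geq 0$, so $R$ is nondecreasing along the integral curve in the outward direction; as $x_{n}$ lies outward from $y$ along this curve, $R(y)\leq\lim_{n}R(x_{n})\leq 3\lambda$, with strict inequality when $Rc>0$ (for then $R$ is strictly increasing along the curve, since $|\nabla f|>0$ on $N_{s}$). Substituting into
\[
H=\frac{\triangle f-\nabla^{2}_{N,N}f}{|\nabla f|}=\frac{n\lambda-R-\lambda+Rc(N,N)}{|\nabla f|}\geq\frac{(n-1)\lambda-R}{|\nabla f|}\geq 0,
\]
which is strict when $Rc>0$, completes the argument. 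I do not expect a genuine obstacle; the only points deserving a little care are confirming that the splitting factor $N$ inherits $Rc\geq 0$ from the product structure of the limit, and checking the uniform bound $R_{N}\leq(n-1)\lambda$ across the three cases of the three-dimensional classification.
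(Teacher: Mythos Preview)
Your proposal is correct and follows exactly the same approach as the paper: the paper's proof is a single sentence noting that the only place $sec\geq 0$ was used was in classifying the three-dimensional asymptotic soliton, and this is now handled by Corollary~\ref{cor_mthm2}. You have simply expanded the details of why this substitution works, which the paper leaves implicit.
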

\begin{proof}
The only place $sec\geq 0$ was used was in the classifying of $3$-solitons with $\sec\geq 0$.
\end{proof}

\begin{cor}\label{connected}
Let $(M^{4},g,f)$ be a normalized shrinking soliton with $|Rm|\leq C$ and $Rc\geq 0$.  If for $s>r_{f}$ $N_{s}$ has more than one component then $(M,g)$ is isometric to a finite quotient of $S^{3}\times\mathds{R}$
\end{cor}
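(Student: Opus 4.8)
The plan is to show that a disconnected level set forces $(M,g)$ to have at least two ends, use the Cheeger--Gromoll splitting theorem to split off a line, and then identify the compact cross-section via the three-dimensional classification in Theorem \ref{mthm2}.

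First I would record some topology of $f$. Since $f$ is proper, grows quadratically, and has no critical points outside a compact set, for every $s>r_f$ the region $\{f>s\}$ contains no critical points of $f$; each of its components is unbounded, since a relatively compact component would force an interior maximum of $f$; and the gradient flow of $f$ identifies $\{f\ge s\}$ with $N_s\times[s,\infty)$, so the components of $N_s$ correspond to the components of $M\setminus\{f<s\}$. Thus if some $N_s$ is disconnected for $s>r_f$ then $M\setminus\{f<s\}$ has at least two unbounded components $\mathcal{A}\ne\mathcal{B}$. Choose $p_k\in\mathcal{A}$, $q_k\in\mathcal{B}$ with $f(p_k),f(q_k)\to\infty$. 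Any minimizing geodesic $\gamma_k$ from $p_k$ to $q_k$ must meet the compact set $K=\{f\le s\}$, because a path joining $\mathcal{A}$ to $\mathcal{B}$ has to leave $\{f>s\}$; and by properness $d(p_k,K),d(q_k,K)\to\infty$. A standard compactness argument then produces a minimizing geodesic line in $(M,g)$.

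Since $Rc\ge 0$, the Cheeger--Gromoll splitting theorem gives an isometric splitting $(M,g)=(\mathds{R},ds^2)\times(N^3,g_N)$ with $N^3$ complete and $Rc_{N^3}\ge 0$; moreover $N^3$ is compact, since a Riemannian product splitting of a manifold with at least two ends must be of the form $\mathds{R}$ times a compact factor. I would then verify that the soliton structure descends: writing out the $\partial_s$-components of $Rc+\nabla^2 f=\lambda g$ and using the product structure gives $\partial_s^2 f=\lambda$ and $\partial_s(Vf)=0$ for $V$ tangent to $N^3$, whence $f(s,n)=\tfrac{\lambda}{2}s^2+h(n)$ after translating $s$; the $N^3$-components of the soliton equation then read $Rc_{N^3}+\nabla^2_{N^3}h=\lambda g_N$. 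Also $|Rm_{N^3}|\le|Rm_M|\le C$ and $Rc_{N^3}\ge 0$. Hence $N^3$ is a compact three-dimensional shrinking soliton with bounded curvature and $Rc\ge 0$, so by Theorem \ref{mthm2} it is isometric to $\mathds{R}^3$, a finite quotient of $S^3$, or a finite quotient of $\mathds{R}\times S^2$; compactness leaves only a finite quotient of $S^3$. Therefore $(M,g)=\mathds{R}\times(S^3/\Gamma)=(S^3\times\mathds{R})/\Gamma$ is a finite quotient of $S^3\times\mathds{R}$.

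The step I expect to be the main obstacle is the topological bookkeeping in the second paragraph: one must be sure that the two chosen sequences really lie in distinct \emph{unbounded} components of $\{f>s\}$ and that minimizing geodesics between them genuinely exit through the fixed compact set $K$ rather than escape to infinity elsewhere, so that the limiting geodesic is a true line. Once the line is in hand, Cheeger--Gromoll and Theorem \ref{mthm2} finish the argument essentially formally, modulo the routine check that $N^3$ inherits a shrinking-soliton structure with the same constant $\lambda$.
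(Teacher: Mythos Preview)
Your proof is correct and follows the same strategy as the paper: a disconnected $N_s$ yields two ends, hence a line, hence a Cheeger--Gromoll splitting $M=\mathds{R}\times N^3$, and then the three-dimensional classification (Corollary~\ref{cor_mthm2}) identifies $N^3$. The only difference is in the endgame: the paper rules out $N^3$ flat or a quotient of $S^2\times\mathds{R}$ by checking directly that in those cases the level sets of $f$ on $M$ would be connected, whereas you observe that $\mathds{R}\times N^3$ has one end whenever $N^3$ is noncompact, so the two-ends hypothesis forces $N^3$ compact outright. Your route is slightly cleaner in that it avoids the case-by-case level-set check; the paper's route avoids having to verify (as you do) that the soliton structure descends to $N^3$, since it only needs Corollary~\ref{cor_mthm2} applied to the factor, which it takes as immediate.
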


\begin{proof}
If this is the case then we can construct a line and split $(M^{4},g)=(\mathds{R},ds^{2})\times(N,h)$.  If $(N,h)$ is flat then so is $M$ and hence $N_{s}$ has only one component, so may assume nonflat.  By Corollary \ref{cor_mthm2} we see $(N,h)$ is either finite quotient of $S^{3}$ or $S^{2}\times\mathds{R}$.  If the latter then the level sets for $f$ are connected.
\end{proof}

\section{Asymptotic Solitons in dimension four}

Finally we are in a position in dimension four to understand the asymptotic behavior of certain shrinking solitons.

\begin{lem}
Let $(M^{4},g,f)$ be a normalized shrinking soliton with $|Rm|\leq C$ and $Rc\geq 0$.  Assume $\exists$ $x_{n}\rightarrow\infty$ such that $(M,g,x_{n})\rightarrow\mathds{R}\times S^{3}/\Gamma$, then $\forall$ sequence $\{y_{n}\}\rightarrow\infty$ $\exists$ a subsequence such that $(M,g,y_{n})\rightarrow \mathds{R}\times S^{3}/\Gamma$
\end{lem}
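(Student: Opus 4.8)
First reduce to connected level sets. If $N_{s}$ is disconnected for some $s>r_{f}$, then by Corollary \ref{connected} $(M,g)$ is a finite quotient of $S^{3}\times\mathds{R}$; such a manifold splits a line globally, and matching its de Rham factors against the hypothesis $(M,g,x_{n})\to\mathds{R}\times S^{3}/\Gamma$ forces $(M,g)$ itself to be $\mathds{R}\times S^{3}/\Gamma$, whence \emph{every} sequence tending to infinity limits on $\mathds{R}\times S^{3}/\Gamma$. So we may assume $N_{s}$ is connected for all $s>r_{f}$; then $\{f>r_{f}\}$ is connected, diffeomorphic to $N_{s_{0}}\times(s_{0},\infty)$, and all the hypersurfaces $N_{s}$, $s>r_{f}$, are mutually diffeomorphic.

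Next I would extract a rigidity statement for the level sets from the hypothesis. By the splitting lemma at the start of Section 4, the $\mathds{R}$--factor of any pointed limit at infinity is the $\nabla f$--direction; hence in $(M,g,x_{n})\to\mathds{R}\times S^{3}/\Gamma$ the hypersurface $N_{t_{n}}$ through $x_{n}$ (with $t_{n}=f(x_{n})$) converges near $x_{n}$ to the closed factor $S^{3}/\Gamma$. Since $S^{3}/\Gamma$ is a closed manifold lying in a ball of fixed radius $D_{0}$, for $n$ large $N_{t_{n}}\cap B(x_{n},2D_{0})$ is a compact boundaryless submanifold contained in $B(x_{n},D_{0}+o(1))$; being open and closed in $N_{t_{n}}$ it is all of $N_{t_{n}}$ by connectedness. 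Thus $N_{t_{n}}\subseteq B(x_{n},2D_{0})$, $N_{t_{n}}$ is diffeomorphic to $S^{3}/\Gamma$ with uniformly bounded diameter, and $Vol(N_{t_{n}})\to Vol(S^{3}/\Gamma)$. Consequently every $N_{s}$ is diffeomorphic to $S^{3}/\Gamma$, and since $\tfrac{d}{ds}Vol(N_{s})=\int_{N_{s}}H\,|\nabla f|^{-1}\geq 0$ by the mean curvature estimate of Section 5, $Vol(N_{s})$ is monotone, so $Vol(N_{s})\uparrow Vol(S^{3}/\Gamma)$.

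The main step --- and the point I expect to be the real obstacle --- is to promote this to a uniform bound $\sup_{s>r_{f}}\mathrm{diam}(N_{s})<\infty$, i.e.\ to exclude that the cross--sections degenerate into a neck (a region modeled on $\mathds{R}\times S^{2}/\Gamma'$) as $s\to\infty$ once one of them is the round $S^{3}/\Gamma$. I would attack this two ways. First, via the Ricci tensor: on $(M,g,f)$, which is gradient by Theorem \ref{mthm3}, Hamilton's strong maximum principle applied to the static Ricci flow gives that either $Rc>0$ on all of $M$, or $\ker Rc$ is a nontrivial parallel distribution, yielding a global splitting $M=\mathds{R}^{k}\times M'$ with $Rc_{M'}>0$; checking $k$ against the hypothesis ($k=4$ forces $M$ flat; $k=3$ is impossible since a $1$--manifold is Ricci flat; $k=2$ gives $M=\mathds{R}^{2}\times S^{2}$, none of whose ends is $\mathds{R}\times S^{3}/\Gamma$) leaves only $k=1$, in which case $M'$ is a $3$--soliton with $Rc>0$, hence the round $S^{3}/\Gamma$ by Corollary \ref{cor_mthm2}, and we are done. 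In the surviving case $Rc>0$ everywhere, set $R^{+}(q)=\lim_{t\to\infty}R(\gamma_{q}(t))$ for $\gamma_{q}$ the integral curve of $\nabla f$ through $q$; the limit exists and lies in $\{0,\tfrac12,\tfrac32\}$ because $\tfrac{d}{dt}R(\gamma_{q})=\langle\nabla R,\nabla f\rangle=2Rc(\nabla f,\nabla f)\geq 0$ makes $R$ monotone along $\gamma_{q}$ and, by Corollary \ref{asym_int}, its limit is the (constant) scalar curvature of a normalized $3$--soliton ($\mathds{R}^{3}$, $\mathds{R}\times S^{2}/\Gamma'$ or $S^{3}/\Gamma'$). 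Continuous dependence of integral curves on initial data makes $R^{+}$ lower semicontinuous, so $\{R^{+}=\tfrac32\}$ is open, and it is nonempty since $R(x_{n})\to\tfrac32$ forces $R^{+}(x_{n})=\tfrac32$; what remains --- showing $\{R^{+}=\tfrac32\}$ is also closed on the connected end, by feeding back the compactness of $N_{t_{n}}$ to rule out $q_{k}\to q$ with $R^{+}(q_{k})=\tfrac32>R^{+}(q)$ --- is the crux.

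Finally, given the uniform diameter bound, the lemma follows. For $y_{n}\to\infty$ write $y_{n}\in N_{s_{n}}$, $s_{n}\to\infty$; by the compactness proposition of Section 4 a subsequence gives $(M,g,y_{n})\to\mathds{R}\times(N',h',y')$ with $(N',h')$ a $3$--dimensional $(C,\kappa')$--controlled Ricci flow, and since $N_{s_{n}}$ has bounded diameter and is the cross--section of the limit, $N'$ is compact and diffeomorphic to $S^{3}/\Gamma$. By Theorem \ref{asym_sol} and Corollary \ref{cor_mthm2} the asymptotic solitons of $(N',h')$ are compact round spherical space forms diffeomorphic to $N'$, hence isometric to one another, so Theorem \ref{asym_sol} makes $(N',h')$ itself a soliton --- the round $S^{3}/\Gamma$. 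Therefore $(M,g,y_{n})\to\mathds{R}\times S^{3}/\Gamma$, as desired.
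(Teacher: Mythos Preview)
Your reduction to connected level sets and your volume computation in the second paragraph are correct and match the paper exactly: $Vol(N_{s})$ is nondecreasing and, along the given sequence $t_{n}=f(x_{n})$, tends to the finite number $Vol(S^{3}/\Gamma)$, so $Vol(N_{s})\leq Vol(S^{3}/\Gamma)$ for all $s>r_{f}$.

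The gap is your third paragraph. You explicitly leave open the closedness of $\{R^{+}=\tfrac32\}$, which is tantamount to leaving open the uniform diameter bound $\sup_{s}\mathrm{diam}(N_{s})<\infty$. That bound is \emph{not} needed, and trying to prove it directly is a detour. The paper's argument avoids it as follows. First treat only those sequences $y_{n}\to\infty$ whose limit is already of the form $\mathds{R}\times(N,h)$ with $(N,h)$ a \emph{shrinking soliton}. By Corollary~\ref{cor_mthm2}, $(N,h)$ is a finite quotient of $S^{3}$ or of $\mathds{R}\times S^{2}$. In the latter case the cross--section $N_{s_{n}}$ through $y_{n}$ would have $Vol(N_{s_{n}})\to\infty$, contradicting the uniform volume bound you already established. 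Hence $N=S^{3}/\tilde{\Gamma}$, and since all $N_{s}$ are diffeomorphic, $\tilde{\Gamma}$ is conjugate to $\Gamma$. Thus every \emph{soliton} limit at infinity is $\mathds{R}\times S^{3}/\Gamma$.

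Now let $y_{n}\to\infty$ be arbitrary and pass to a subsequence so that $(M,g,y_{n})\to\mathds{R}\times(N,h)$ with $(N,h(t))$ a $(C,\kappa')$--controlled Ricci flow. Invoke the Proposition of Section~4 (not Theorem~\ref{asym_sol} applied to $(N,h)$ directly, but the version living on $M$): there exist sequences $y_{n}^{\pm}$ in $M$ with $(M,g,y_{n}^{\pm})\to\mathds{R}\times(N^{\pm},h^{\pm})$ where $(N^{\pm},h^{\pm})$ are shrinking solitons, and if $N^{+}$ and $N^{-}$ are isometric then so is $N$. By the previous paragraph both $(N^{\pm},h^{\pm})$ are the round $S^{3}/\Gamma$; hence they are isometric, and therefore $(N,h)$ is $S^{3}/\Gamma$ as well. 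This closes the argument without any diameter bound and without the $R^{+}$ analysis.
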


\begin{proof}
We can assume $N_{s}$ is connected for $s>r_{f}$, since otherwise we are done by Corollary \ref{connected}.

First let us assume $y_{n}$ is any sequence such that $(M,g,y_{n})\rightarrow \mathds{R}\times(N,h)$ where $(N,h)$ is a shrinking soliton.  Let $y_{n}\in N_{s_{n}}$ and $x_{n}\in N_{r_{n}}$.  Note that $Vol(N_{s})$ is nondecreasing, and hence must converge to $Vol(S^{3}/\Gamma)$ since this is true for $N_{r_{n}}$.  If $N=\mathds{R}\times S^{2}/\tilde{\Gamma}$ then $Vol(N_{s_{n}})\rightarrow\infty$, which is not possible.  If $N=S^{3}/\tilde{\Gamma}$ then since outside a compact set $K$ we have that $M-K$ is diffeomorphic to $\mathds{R}\times N_{s_{n}}$ and $\mathds{R}\times N_{r_{n}}$ we have that $N_{s_{n}}$ and $N_{r_{n}}$ are diffeomorphic, hence $\tilde{\Gamma}$ is conjugate to $\Gamma$.

Now let $y_{n}$ arbitrary.  After passing to subsequence we can assume it converges, and by the previous there are sequences $y^{-}_{n}$, $y^{+}_{n}$ such that $(M,g,y^{\pm}_{n})$ converge to solitons.  By the last paragraph these solitons are the same and hence $(M,g,y_{n})$ converges to the same soliton by Proposition (\ref{asym_sol}).
\end{proof}

\begin{prop}\label{asym_four}
Let $(M^{4},g,f)$ be a normalized shrinking soliton with $|Rm|\leq C$ and $Rc\geq 0$.  Fix $p\in M$.  Then if $\exists$ $x_{n}\rightarrow\infty$ such that $(M,g,x_{n})\rightarrow\mathds{R}\times S^{3}/\Gamma$, then $\forall \epsilon>0$ $\exists$ $r>0$ such that if $x\not\in B_{r}(p)$ then $B_{1/\epsilon}(x)$ must be within $\epsilon$ of $\mathds{R}\times S^{3}/\Gamma$ in $C^{1/\epsilon}$
\end{prop}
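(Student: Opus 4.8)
The plan is to argue by contradiction and compactness, leveraging the previous lemma which says that \emph{every} sequence going to infinity subconverges to $\mathbb{R}\times S^3/\Gamma$. Suppose the conclusion fails for some $\epsilon>0$. Then there is a sequence of points $x_n\to\infty$ (necessarily $d(x_n,p)\to\infty$, since the failure occurs outside arbitrarily large balls) such that $B_{1/\epsilon}(x_n)$ is \emph{not} within $\epsilon$ of $\mathbb{R}\times S^3/\Gamma$ in the $C^{1/\epsilon}$ sense. First I would invoke the $(C,\kappa)$-controlled structure (the soliton has bounded curvature and, by Theorem \ref{mthm4}, is $\kappa$-noncollapsed), so by Cheeger--Gromov compactness a subsequence of $(M,g,x_n)$ converges in $C^\infty_{loc}$ to a complete pointed limit $(M_\infty,g_\infty,x_\infty)$. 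By the previous lemma this limit is isometric to $\mathbb{R}\times S^3/\Gamma$.

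The heart of the argument is then to observe that $C^\infty_{loc}$ Cheeger--Gromov convergence gives, for the fixed radius $1/\epsilon$ and fixed regularity $1/\epsilon$, a sequence of diffeomorphisms $\psi_n$ from $B_{g_\infty}(x_\infty,1/\epsilon)$ (or a slightly larger ball, to be safe about boundary effects) onto a neighborhood of $x_n$ with $\psi_n^*g\to g_\infty$ in $C^{1/\epsilon}$ on that ball. For $n$ large this forces $B_g(x_n,1/\epsilon)$ to be within $\epsilon$ of $\mathbb{R}\times S^3/\Gamma$ in $C^{1/\epsilon}$ — contradicting the defining property of the sequence $\{x_n\}$. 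Hence the conclusion must hold: for every $\epsilon>0$ there is an $r>0$ with the stated property. I would spell out one subtlety: the notion "$B_{1/\epsilon}(x)$ is within $\epsilon$ of $\mathbb{R}\times S^3/\Gamma$ in $C^{1/\epsilon}$" must be the same metric on pointed balls used implicitly throughout Section 6 (e.g. existence of a $C^{1/\epsilon}$ diffeomorphism onto its image distorting the metric by less than $\epsilon$), so that the contradiction is literal; this is just unwinding definitions.

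The main obstacle, such as it is, is purely bookkeeping rather than conceptual: one must make sure the previous lemma is being applied to \emph{exactly} the right class of sequences — it is stated for arbitrary $\{y_n\}\to\infty$, which is precisely what a contradicting sequence $\{x_n\}$ is, so there is no gap — and one must be slightly careful that the compactness theorem is applied with a point-independent curvature and injectivity-radius bound, which is exactly what "$(C,\kappa)$-controlled'' and the noncollapsing of Theorem \ref{mthm4} provide. No single integral curve or soliton-equation computation is needed here; all the real work was done in the previous lemma, and this proposition is the standard "local convergence everywhere $\Rightarrow$ uniform convergence at infinity'' upgrade. I would therefore keep the written proof short: set up the contradiction, quote compactness, quote the previous lemma to identify the limit, and extract the contradiction from Cheeger--Gromov convergence at the fixed scale.
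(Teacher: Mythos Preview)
Your proposal is correct and matches the paper's proof essentially verbatim: argue by contradiction, extract a sequence $y_n\to\infty$ whose $B_{1/\epsilon}$-balls fail to be $\epsilon$-close to $\mathds{R}\times S^3/\Gamma$, then apply the preceding lemma to get a subsequence converging to $\mathds{R}\times S^3/\Gamma$, contradicting the choice of $y_n$. The paper's version is three sentences and leaves the compactness step implicit; your added remarks about the $(C,\kappa)$-controlled structure and unwinding the meaning of $C^{1/\epsilon}$-closeness are reasonable elaborations but not new ideas.
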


\begin{proof}
Assume for some $\epsilon$ that no such $r$ exists.  Let $y_{n}\rightarrow\infty$ such that $B_{1/\epsilon}(y_{n})$ are not $\epsilon$ close to $\mathds{R}\times S^{3}/\Gamma$.  But then by the last lemma a subsequence converges to $\mathds{R}\times S^{3}/\Gamma$, a contradiction.
\end{proof}

\begin{lem}\label{asym_four2}
Let $(M^{4},g,f)$ be a normalized shrinking soliton with $|Rm|\leq C$ and $Rc\geq 0$.  Assume $\exists$ $x_{n}\rightarrow\infty$ such that $(M,g,x_{n})\rightarrow\mathds{R}^{2}\times S^{2}/\Gamma$, then $\forall$ integral curve $\gamma\rightarrow\infty$ of $f$ $\exists$ $y_{n}\in\gamma\rightarrow\infty$ such that $(M,g,y_{n})\rightarrow \mathds{R}^{2}\times S^{2}/\tilde{\Gamma}$
\end{lem}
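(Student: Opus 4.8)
The plan is to follow the architecture of the preceding lemma (the $\mathds{R}\times S^{3}/\Gamma$ case), interchanging the roles of ``bounded'' and ``unbounded'' for the volumes $\mathrm{Vol}(N_{s})$ of the soliton hypersurfaces. First I would reduce to the irreducible situation. If some $N_{s}$ with $s>r_{f}$ is disconnected, then by Corollary \ref{connected} $(M,g)$ is a finite quotient of $S^{3}\times\mathds{R}$; but then every pointed limit $(M,g,z_{n})$ with $z_{n}\to\infty$ is isometric to $\mathds{R}\times(\text{a }3\text{-dimensional spherical space form})$, which cannot be $\mathds{R}^{2}\times S^{2}/\Gamma$ (for instance $\pi_{2}$ vanishes in the former and not in the latter), contradicting the hypothesis. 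So all $N_{s}$, $s>r_{f}$, are connected; moreover $(M,g)$ is not flat, since $\mathds{R}^{4}$ has no pointed limit at infinity equal to $\mathds{R}^{2}\times S^{2}/\Gamma$.

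Next I would record two facts. First, $\mathrm{Vol}(N_{s})$ is nondecreasing in $s$ (the mean curvature of $N_{s}$ is $\geq 0$, as already proved) \emph{and} $\mathrm{Vol}(N_{s})\to\infty$: writing $x_{n}\in N_{r_{n}}$ with $(M,g,x_{n})\to\mathds{R}^{2}\times S^{2}/\Gamma$ and $r_{n}\to\infty$, the rescaled soliton functions $(f-f(x_{n}))/|\nabla f|(x_{n})$ satisfy $|\nabla^{2}|\leq C/|\nabla f|(x_{n})\to 0$, so as in the splitting lemma of Section 4 they converge to a nonconstant function with vanishing Hessian on $\mathds{R}^{2}\times S^{2}/\Gamma$, necessarily an affine function of the $\mathds{R}^{2}$ coordinates; hence near $x_{n}$ the level set $N_{r_{n}}$ is, on balls whose radii tend to infinity, close to the slice $\mathds{R}\times S^{2}/\Gamma$, which forces $\mathrm{Vol}(N_{r_{n}})\to\infty$. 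Second, along any integral curve $\gamma$ of $f$ the scalar curvature is nondecreasing towards infinity, because $\langle\nabla R,\nabla f\rangle = 2Rc(\nabla f,\nabla f)\geq 0$ (the identity $\nabla R=2Rc(\nabla f)$ used above); and since $(M,g)$ is not flat, $R>0$ everywhere by the strong maximum principle (applied to the associated ancient flow via $\partial_{t}R=\triangle R+2|Rc|^{2}$), so $R\geq R(\gamma(0))>0$ along the whole of $\gamma$.

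The proof is then a trichotomy. Fix an integral curve $\gamma\to\infty$ and apply Corollary \ref{asym_int}: there are $y_{n}\in\mathrm{Image}(\gamma)$, $y_{n}\to\infty$, with $(M,g,y_{n})\to\mathds{R}\times N$, where $N$ is a normalized $\kappa'$-noncollapsed $3$-dimensional shrinking soliton with $|Rm|\leq C$ and, by passing $Rc\geq 0$ to the limit, $Rc\geq 0$; so by Corollary \ref{cor_mthm2} (Theorem \ref{mthm2}) $N$ is a finite quotient of $\mathds{R}^{3}$, of $S^{3}$, or of $\mathds{R}\times S^{2}$. A finite group of Euclidean isometries has a fixed point, so the only flat possibility is $N=\mathds{R}^{3}$, whence $\mathds{R}\times N=\mathds{R}^{4}$ and $R(y_{n})\to 0$ --- impossible by the second fact. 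If $N$ is a quotient of $S^{3}$, write $y_{n}\in N_{s_{n}}$ with $s_{n}\to\infty$; the same local picture as above shows $N_{s_{n}}$ is close, on balls of radii $\to\infty$, to the \emph{closed} manifold $\{f_{\infty}=c\}=S^{3}/\widetilde{\Gamma}$, and since $N_{s_{n}}$ is connected and the model is compact without boundary this forces $N_{s_{n}}$ to be globally close to $S^{3}/\widetilde{\Gamma}$, so $\mathrm{Vol}(N_{s_{n}})$ stays bounded --- impossible by the first fact. Hence $N$ is a finite quotient of $\mathds{R}\times S^{2}$ and $(M,g,y_{n})\to\mathds{R}\times N=\mathds{R}^{2}\times S^{2}/\widetilde{\Gamma}$, which is the assertion.

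The step I expect to be most delicate is making rigorous the two ``the level set is globally modelled on a slice of the limit'' claims: the Cheeger--Gromov convergence only controls $N_{r_{n}}$ (resp.\ $N_{s_{n}}$) on a ball of fixed, though growing, radius about the basepoint, and this must be upgraded to global volume information. As in the preceding lemma the device is that the connected hypersurface $N_{s}$ contains a piece modelled on a slice of the limit, which in the spherical case is compact and boundaryless and hence exhausts $N_{s_{n}}$, while in the $\mathds{R}^{2}\times S^{2}/\Gamma$ case the slice $\mathds{R}\times S^{2}/\Gamma$ already accumulates arbitrarily large volume inside the controlled ball. A secondary loose end is the enumeration of the finite quotients of $\mathds{R}\times S^{2}$ that carry a shrinking soliton structure, which pins down the admissible $\widetilde{\Gamma}$ (and, if a genuine metric product is wanted rather than a twisted one, calls for a short additional remark).
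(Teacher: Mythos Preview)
Your argument is correct, but the paper's own proof is much shorter because it leans on the immediately preceding lemma instead of rerunning the volume argument. The paper simply applies Corollary~\ref{asym_int} to get $y_n\in\gamma$ with $(M,g,y_n)\to\mathds{R}\times N$, and then observes: if $N$ were a quotient of $S^3$, the previous lemma (the $\mathds{R}\times S^3/\Gamma$ case) would force \emph{every} divergent sequence---in particular $x_n$---to subconverge to $\mathds{R}\times S^3/\Gamma'$, contradicting the hypothesis. That is the whole proof. Your Step~5 is precisely the contrapositive of the volume contradiction already packaged into that lemma (bounded $\mathrm{Vol}(N_s)$ versus unbounded), so you are redoing work rather than citing it. What you gain is a self-contained argument and an explicit treatment of the flat case $N=\mathds{R}^3$, which the paper passes over silently; your scalar-curvature monotonicity argument ($\langle\nabla R,\nabla f\rangle=2Rc(\nabla f,\nabla f)\geq 0$ together with $R>0$ on a nonflat ancient solution) is the right way to dispose of it. The ``delicate'' level-set convergence issues you flag are real but are exactly the same ones the paper already absorbed into the proof of the preceding lemma, so if you accept that lemma you need not revisit them here.
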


\begin{proof}
We know by Corollary \ref{asym_int} that $\exists$ $y_{n}\in\gamma$ tending to infinity such that $(M,g,y_{n})\rightarrow \mathds{R}\times N$, for $N$ a shrinking soliton.  If $N$ is not a quotient of $\mathds{R}^{2}\times S^{2}$ then by Corollary \ref{cor_mthm2} it is a quotient of $S^{3}$ and by the last lemma then contradicts the existence of $x_{n}$.
\end{proof}

\begin{remark}\label{counter}
Notice that in the above $\Gamma$ and $\tilde{\Gamma}$ may be different.  Let $N=\mathds{R}\times(\mathds{R}\times S^{2})/\Gamma$ where $\Gamma=\mathds{Z}_{2}$ acts by $(x,s)\rightarrow (-x,-s)$, the antipodal map.  Then if $x_{n}$ tend to infinity along the first $\mathds{R}$ factor the limit is still $\mathds{R}^{2}\times S^{2}/\Gamma$, while if $x_{n}$ tend to infinity along the second $\mathds{R}$ factor the limit is $\mathds{R}^{2}\times S^{2}$.
\end{remark}

\section{Shrinking solitons with positive soliton hessian}

The results of this section are similar to those proved by the author in the unpublished work by the author \cite{Na}.  For other interesting results in this direction see \cite{PW}.

\begin{thm}\label{hess_sol}
Let $(M,g,f)$ be complete shrinking soliton.  Assume $Rc\geq 0$ and
that $\nabla^{2}f>0$, then $(M,g)$ is isometric to $(\mathds{R}^{n},g_{0})$
\end{thm}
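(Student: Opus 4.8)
The plan is to reduce to the normalized case $\lambda=\tfrac12$, so that $Rc+\nabla^{2}f=\tfrac12 g$ and (with the right additive constant) $R+|\nabla f|^{2}=f$; since rescaling $g$ does not affect whether $(M,g)$ is isometric to $\mathds{R}^{n}$, this is harmless. The two hypotheses then combine to $0\leq Rc=\tfrac12 g-\nabla^{2}f<\tfrac12 g$ \emph{pointwise}, so the Ricci curvature is bounded and all of its eigenvalues lie in $[0,\tfrac12)$. Two consequences I would record immediately: (i) $M$ is noncompact, since a strictly convex $f$ cannot attain an interior maximum; and (ii) by Lemma \ref{growth_sol} together with the bound $\nabla^{2}f\leq\tfrac12 g$ (which gives a matching quadratic \emph{upper} bound on $f$), the function $f$ is proper, bounded below, grows quadratically, $|\nabla f|$ grows at most linearly, and $\int_{M}e^{-f}dv<\infty$; moreover $Rc\geq 0$ gives polynomial volume growth by Bishop--Gromov.

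The heart of the argument is a weighted integration by parts for the scalar curvature. Using the soliton identity $\triangle_{f}R=R-2|Rc|^{2}$, where $\triangle_{f}=\triangle-\langle\nabla f,\nabla\,\cdot\,\rangle$, I would multiply by $Re^{-f}$, integrate against a cutoff $\phi_{r}$ supported in $B_{2r}(p)$, and pass to the limit to obtain $\int_{M}|\nabla R|^{2}e^{-f}=\int_{M}(2R|Rc|^{2}-R^{2})e^{-f}$. Now the eigenvalue bound $\lambda_{i}(Rc)\in[0,\tfrac12]$ forces $|Rc|^{2}\leq\tfrac12 R$, hence $2R|Rc|^{2}-R^{2}\leq 0$ everywhere. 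Since the left-hand side is nonnegative, both sides vanish, and because the integrand on the right is $\leq 0$ it must vanish pointwise: at every point either $R=0$ or $|Rc|^{2}=\tfrac12 R$.

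It remains to eliminate the second alternative. At a point where $R>0$ and $|Rc|^{2}=\tfrac12 R$, the identity $\sum\lambda_{i}^{2}=\tfrac12\sum\lambda_{i}$ with $\lambda_{i}\in[0,\tfrac12]$ forces every $\lambda_{i}\in\{0,\tfrac12\}$ (strict convexity of $t\mapsto t^{2}$ against the chord on $[0,\tfrac12]$), and $R>0$ means some $\lambda_{j}=\tfrac12$; but then $\nabla^{2}f=\tfrac12 g-Rc$ has a zero eigenvalue there, contradicting $\nabla^{2}f>0$. Hence $R\equiv 0$, and with $Rc\geq 0$ this gives $Rc\equiv 0$, so $\nabla^{2}f=\tfrac12 g$ with $f$ nonconstant. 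A complete manifold admitting a nonconstant function with Hessian $\tfrac12 g$ is isometric to $(\mathds{R}^{n},g_{0})$ (Tashiro); concretely, $2\sqrt{f-\min f}$ is a smooth distance function whose Hessian is that of the Euclidean radial function, so the metric is a flat cone which, being smooth at the minimum point, is $\mathds{R}^{n}$. This completes the proof.

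The step I expect to require the most care is the integration by parts in the noncompact case: one must justify discarding the boundary term using only the bound on $Rc$, not on the full curvature tensor. This works because $R$ is bounded by $\tfrac{n}{2}$, $|\nabla R|=2|Rc(\nabla f,\cdot)|\leq C|\nabla f|$ grows at most linearly, the volumes of $B_{r}(p)$ grow at most polynomially, and $e^{-f}$ decays like $e^{-d(\cdot,p)^{2}/4}$, so the cutoff error $\tfrac{C}{r}\int_{B_{2r}\setminus B_{r}}Re^{-f}|\nabla R|$ is bounded by $C r^{n}e^{-cr^{2}}\to 0$.
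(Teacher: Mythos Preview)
Your proof is correct and follows essentially the same route as the paper: both use the identity $\triangle_{f}R=2(\lambda R-|Rc|^{2})$, the eigenvalue constraint $0\leq Rc\leq\lambda g$, and a weighted integration by parts against $e^{-f}dv_{g}$ to force the Ricci eigenvalues into $\{0,\lambda\}$, after which $\nabla^{2}f>0$ rules out $\lambda$. The only differences are packaging: the paper isolates the integration-by-parts step as a general Liouville theorem for bounded $\triangle_{f}$-subsolutions and then applies it to $R$, and it handles the endgame $Rc\equiv 0\Rightarrow M\cong\mathds{R}^{n}$ by a direct curvature computation (showing $\nabla_{\nabla f}|Rm|^{2}\leq 0$ and $Rm=0$ at the unique critical point) rather than by invoking Tashiro.
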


To prove the above we introduce the notion of the $f$-Laplacian of a
function $u$.  The motivation is fairly clear and comes directly from the standard
Laplace-Beltrami operator, which is defined as $\triangle =
\nabla^{*}\nabla$ with $\nabla^{*}$ the adjoint of the covariant
derivative with respect to the Riemannian volume form. Similarly we
define:

\begin{definition}
The $f$-Laplacian of a function $u$ is defined by
$\triangle_{f}u\equiv \nabla^{*f}\nabla u$, where the adjoint is
taken with respect to the $f$-measure $e^{-f}dv_{g}$.
\end{definition}

We will use the following simple lemma

\begin{lem}
Let $(N,g,f)$ be a complete shrinking soliton which is Ricci flat, then $(N,g)$ is isometric to $(\mathds{R}^{n},g_{0})$.
\end{lem}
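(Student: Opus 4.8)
The plan is to show that a Ricci-flat shrinking soliton $(N,g,f)$ must in fact be flat, and then to invoke the rigidity of complete flat manifolds carrying a nontrivial soliton structure. The starting point is the soliton equation $Rc + \nabla^2 f = \lambda g$ with $\lambda = \tfrac12$; since $Rc \equiv 0$ this reduces to $\nabla^2 f = \lambda g$, i.e. $f$ has constant Hessian equal to a positive multiple of the metric. First I would record the standard trace identities from the introduction: $R + |\nabla f|^2 - 2\lambda f = \text{const}$ and $R + \triangle f = n\lambda$; with $R \equiv 0$ these give $\triangle f = n\lambda > 0$, consistent with $\nabla^2 f = \lambda g$.

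The key step is that $\nabla^2 f = \lambda g$ forces $(N,g)$ to split off a Euclidean factor along $\nabla f$, and in fact to be globally isometric to $\mathds{R}^n$ with $f$ a quadratic function. Concretely, $\nabla f$ is a nontrivial gradient field with $\nabla^2 f = \lambda g$; its flow lines are (reparametrized) geodesics, $|\nabla f|^2$ is a quadratic function of $f$ (from $|\nabla f|^2 = 2\lambda f + \text{const}$), and the level sets of $f$ are totally umbilic. The classical argument — essentially Tashiro's theorem, or a direct computation — shows that a complete manifold admitting a function with $\nabla^2 f = \lambda g$, $\lambda \neq 0$, must be isometric to $\mathds{R}^n$ with $f(x) = \tfrac{\lambda}{2}|x - x_0|^2 + c$. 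One can see this by noting $f$ attains an interior minimum at a point $p$ (it is proper and bounded below by the growth estimate in Lemma \ref{growth_sol}, or directly since $\nabla^2 f > 0$ everywhere), and then the normal exponential map from $p$, combined with $\nabla^2 f = \lambda g$, identifies $N$ isometrically with $\mathds{R}^n$: along any unit-speed geodesic from $p$, the function $f\circ\gamma$ satisfies $(f\circ\gamma)'' = \lambda$, so $f$ is radially quadratic, and the Hessian condition pins down the metric in geodesic polar coordinates to be the Euclidean one.

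I expect the main obstacle to be the completeness/global step: upgrading the purely local consequence "$\nabla^2 f = \lambda g$" to a global isometry with $\mathds{R}^n$, and in particular ruling out quotients. Here one uses that $f$ is strictly convex ($\nabla^2 f = \lambda g > 0$), hence has at most one critical point, which must be a global minimum; the flow of $\nabla f / |\nabla f|$ then gives a diffeomorphism of $N \setminus \{p\}$ with $(0,\infty) \times f^{-1}(c_0)$, and the umbilicity plus completeness forces the level sets to be round spheres of the correct radius, so $N \cong \mathds{R}^n$. Alternatively, and perhaps more cleanly given the tools already developed, one can lift to the universal cover $\tilde N$ (which is again a Ricci-flat shrinking soliton, and by Remark \ref{ffg} the fundamental group is finite), apply the de Rham / Cheeger--Gromoll splitting repeatedly using that $\nabla f$ generates a line, conclude $\tilde N = \mathds{R}^n$, and then observe that any deck transformation must fix the unique critical point of $f$ and commute with the radial structure, hence be trivial. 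Either route is short; the only genuine content is the Tashiro-type rigidity, which is classical and which I would cite or sketch rather than reprove in detail.
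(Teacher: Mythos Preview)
Your proposal is correct but takes a genuinely different route from the paper. You reduce immediately to the classical Tashiro rigidity: a complete manifold admitting a nonconstant function with $\nabla^{2}f=\lambda g$, $\lambda>0$, is isometric to Euclidean space. This is clean and legitimate---once you have observed that $f$ attains a minimum $p$ (strict convexity plus properness), the equation $(f\circ\gamma)''=\lambda$ along geodesics from $p$ shows $f=\tfrac{\lambda}{2}d(\cdot,p)^{2}+f(p)$; in particular $d(\cdot,p)^{2}$ is smooth, so $p$ has no cut locus, and in geodesic polar coordinates the Hessian condition forces $\partial_{r}g_{r}=\tfrac{2}{r}g_{r}$, whence $g=dr^{2}+r^{2}g_{S^{n-1}}$.

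The paper argues differently: it does not invoke Tashiro, but instead proves flatness by a curvature computation. From the Ricci identity and Ricci-flatness one gets $R_{ijkq}\nabla^{q}f=0$; since the minimum $p$ is nondegenerate, $\nabla f/|\nabla f|$ can be made to approach any unit direction at $p$, forcing $Rm(p)=0$. Then a second Bianchi computation yields $\nabla_{\nabla f}|Rm|^{2}=-4\lambda|Rm|^{2}$, so $|Rm|^{2}$ is nonincreasing along the backward gradient flow, which converges to $p$; hence $Rm\equiv 0$. Finally, flat plus simply connected (the unique critical point makes $N$ contractible) gives $\mathds{R}^{n}$.

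Your argument is shorter and more elementary, relying only on ODE comparison in polar coordinates; the paper's argument is more self-contained within the soliton calculus and produces as a byproduct the useful monotonicity $\nabla_{\nabla f}|Rm|^{2}\leq 0$. Either is acceptable; just be explicit about why $p$ has no cut locus (smoothness of $d^{2}$) so that the polar-coordinate computation is global.
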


\begin{proof}

We know by Ricci flatness that on $N$, $\nabla^{2}f=\lambda g$.  Now
by our growth estimate we know $f$ always has a global minimum point, say $p\in
N$.  If $x\in N$ and $\gamma$ a geodesic connecting $x$ to $p$ we
see by integration that $\nabla_{\dot{\gamma}}f(x)=\lambda d(x,p)$
and $f(x)= \frac{\lambda}{2}d(x,p)^{2}+f(p)$ . Hence $f$ has a
unique nondegenerate minimum point at $p$.  Now we compute

\[
R_{ijkq}\nabla^{q}f = (\nabla_{i}\nabla_{j}\nabla_{k}f -
\nabla_{j}\nabla_{i}\nabla_{k}f) = \nabla_{j}R_{ik}-\nabla_{i}R_{jk}
= 0. \label{Rm}
\]

In particular, because $p$ is a nondegenerate critical point, for any
unit vector $X\in T_{p}M$ we can find $x\rightarrow p$ such that
$\frac{\nabla f(x)}{|\nabla f|}\rightarrow X$ (just use Taylor's
theorem in normal coordinates to see this). Dividing both sides of
the above by $|\nabla f|$, taking $X=\partial_{l}$ and limiting out
we get that $Rm(p)=0$. A final computation now gives us that
\[
\nabla_{\nabla f}|Rm|^{2} = \nabla^{p}f\nabla_{p}|Rm|^{2} =
2\nabla^{p}f R^{ijkl}\nabla_{p}R_{ijkl}
\]
\[
= -2\nabla^{p}f R^{ijkl} (\nabla_{i}R_{jpkl} + \nabla_{j}R_{pikl})
\]
\[
= -2R^{ijkl}(\nabla_{i}( \nabla^{p}f R_{jpkl})+\nabla_{j}(
\nabla^{p}f
R_{pikl})-R_{jpkl}\nabla_{i}\nabla^{p}f-R_{pikl}\nabla_{j}\nabla^{p}f)
\]
\[
= 4\lambda R^{ijkl}(R_{jikl}) = -4\lambda |Rm|^{2} \leq 0
\]

Our explicit formula for $f$ tells us that the negative gradient
flow from any $x\in N$ converges to $p$, and hence from the above
$|Rm|$ takes a maximum at $p$.  But we showed $Rm(p)=0$.  Hence
$Rm=0$.  Because $f$ is smooth and has a unique critical minimum we see $N$ is homeomorphic to $\mathds{R}^{n}$.  Since $N$ is simply connected and flat, $N$ is isometric to $\mathds{R}^{n}$.
\end{proof}

Under a positivity assumption on $Rc+\nabla^{2}f$ we have the following
estimate and Liouville type theorem:

\begin{prop}
Let $(M,g,f)$ be smooth, complete with bounded Ricci curvature. Assume
$Rc+\nabla^{2}f \geq \lambda g$ with $\lambda > 0$.  Let
$u:M\rightarrow\mathds{R}$ smooth.  Then

1) There exists $\alpha>0$ such that if $\triangle_{f}u=0$ and $|u|\leq
A e^{\alpha d(x,p)^{2}}$ for  some $A>0$ and $p\in M$, then $u=constant$.

2) If $\triangle_{f}u\geq 0$ and $u$ is bounded then $u=constant$.
\end{prop}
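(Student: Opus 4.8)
The plan is to prove both statements by a single weighted Caccioppoli (integration‑by‑parts) argument for the drift operator $\triangle_{f}u = \triangle u - \langle\nabla f,\nabla u\rangle$, which is formally self‑adjoint for the measure $d\mu = e^{-f}dv_{g}$ (since $e^{-f}\triangle_{f}w = \mathrm{div}(e^{-f}\nabla w)$). The fuel for the argument is the growth of $f$: by Remark~\ref{ffg} the estimate of Lemma~\ref{growth_sol} is available under the present weaker hypothesis $Rc+\nabla^{2}f\geq\lambda g$, $\lambda>0$, $|Rc|\leq C$, giving $f(x)\geq\tfrac{\lambda}{2}d(x,p)^{2}+a\,d(x,p)+b$. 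In particular $\mu(M)<\infty$ and $d\mu$ has Gaussian‑decaying tails, while bounded Ricci gives, via Bishop--Gromov, at most exponential volume growth $\mathrm{Vol}(B_{r}(p))\leq V_{0}e^{C'r}$.

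First I would fix a cutoff $\phi\in C^{\infty}_{c}(M)$ and test the equation against $v=\phi^{2}u$; in case (2) I would first add a constant so that $u\geq 0$, which preserves $\triangle_{f}u\geq 0$ and boundedness. Integration by parts (everything is compactly supported after multiplying by $\phi$) gives
\[
\int_{M}\phi^{2}u\,\triangle_{f}u\,d\mu \;=\; -\int_{M}\phi^{2}|\nabla u|^{2}\,d\mu \;-\; 2\int_{M}\phi\,u\,\langle\nabla\phi,\nabla u\rangle\,d\mu .
\]
The left side is $0$ in case (1) and $\geq 0$ in case (2), so absorbing the cross term through $2ab\leq\tfrac12 a^{2}+2b^{2}$ yields in both cases the Caccioppoli estimate $\int_{M}\phi^{2}|\nabla u|^{2}\,d\mu \leq 4\int_{M}u^{2}|\nabla\phi|^{2}\,d\mu$. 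I would then take $\phi=\phi_{r}$ with $\phi_{r}\equiv 1$ on $B_{r}(p)$, supported in $B_{2r}(p)$, $|\nabla\phi_{r}|\leq 2/r$, so the right side is at most $\tfrac{16}{r^{2}}\int_{B_{2r}\setminus B_{r}}u^{2}\,d\mu$.

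To finish: in case (2), $u$ is bounded and $\mu(M)<\infty$, so $\tfrac{16}{r^{2}}\int_{B_{2r}\setminus B_{r}}u^{2}\,d\mu\to 0$ as $r\to\infty$ (it is $\tfrac{16}{r^2}\|u\|_\infty^2$ times the tail of a finite measure). In case (1), I would choose $\alpha=\lambda/8$ (any $\alpha<\lambda/4$ works); then on $B_{2r}\setminus B_{r}$, where $d(x,p)\geq r$, the bound $|u|\leq A e^{\alpha d^{2}}$ together with the quadratic lower bound on $f$ gives $u^{2}e^{-f}\leq A^{2}e^{(2\alpha-\lambda/2)d^{2}+|a|d+|b|}\leq A^{2}e^{-\lambda r^{2}/8}$ for $r$ large, so
\[
\frac{16}{r^{2}}\int_{B_{2r}\setminus B_{r}}u^{2}\,d\mu \;\leq\; \frac{16A^{2}}{r^{2}}\,V_{0}e^{2C'r}\,e^{-\lambda r^{2}/8}\;\longrightarrow\;0 .
\]
Either way, letting $r\to\infty$ in the Caccioppoli estimate forces $\int_{M}|\nabla u|^{2}\,d\mu=0$, hence $\nabla u\equiv 0$ and $u$ is constant.

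The one point that needs genuine care is the calibration of the exponent in case (1): the Gaussian decay $e^{-(\lambda/2-2\alpha)d^{2}}$ of $d\mu$ on annuli must outrun the exponential volume growth of those annuli, which is exactly where boundedness of $Rc$ is used; once $\alpha<\lambda/4$ this balance is automatic. Everything else — justifying the integrations by parts, the harmless addition of a constant in case (2), and the quadratic growth of $f$ and finiteness of $\mu$ — is immediate from Lemma~\ref{growth_sol} and Remark~\ref{ffg}.
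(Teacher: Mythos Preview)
Your argument is correct and follows essentially the same route as the paper: integrate by parts against $\phi^{2}u$ with respect to $e^{-f}dv_{g}$ to obtain the Caccioppoli inequality $\int\phi^{2}|\nabla u|^{2}e^{-f}\leq 4\int u^{2}|\nabla\phi|^{2}e^{-f}$, then let the cutoff exhaust $M$ and use the quadratic lower bound on $f$ (valid under $Rc+\nabla^{2}f\geq\lambda g$, $|Rc|\leq C$) to kill the right side. The only differences are cosmetic: the paper takes $\phi\equiv 1$ on a fixed unit ball and supports it in $B_{1+r}$, while you let the inner ball expand; and in case~(2) the paper normalizes $\sup u=1$ and tests against $\phi^{2}u^{+}$, whereas you shift $u$ to be nonnegative and test against $\phi^{2}u$ --- both devices serve only to make the test function nonnegative so that the sign of $\triangle_{f}u\geq 0$ is usable. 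Your explicit choice $\alpha=\lambda/8$ matches the paper's constraint $\alpha<\lambda/4$.
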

\begin{remark}
The above need not hold for $\lambda\leq 0$.
\end{remark}
\begin{proof}[Proof of (1)]
Let $x\in M$ be arbitrary.  Note by multiplying by $e^{-f}$ we get
\begin{equation}
\nabla^{i}(e^{-f}\nabla_{i}u)=0.
\end{equation}
Let $\phi: M\rightarrow\mathds{R}$ be a cutoff function with
$$\phi = \left\{\begin{array}{lr}
1 \textrm{ on }  B(x,1)\\
0\leq\phi\leq 1 \textrm{ on } B(x,1+r)-B(x,1)\\
0 \textrm{ on } M-B(x,1+r)
\end{array}\right.
 $$
where $r>0$ and $|\nabla\phi|\leq\frac{C}{r}$ for some $C$.
Multiplying the above by $\phi^{2}u$ and integrating we get
\[
-\int(2\phi u\nabla^{i}\phi\nabla_{i}u+\phi^{2}|\nabla u|^{2})
e^{-f}dv_{g} = 0
\]
\[
\int\phi^{2}|\nabla u|^{2} e^{-f}dv_{g} = -2\int(\phi
u\nabla^{i}\phi\nabla_{i}u) e^{-f}dv_{g}
\]
\[
\leq \int (\frac{1}{2}\phi^{2}|\nabla u|^{2}
+2u^{2}|\nabla\phi|^{2}) e^{-f}dv_{g}
\]
so that
\[
\int_{B(x,1)}|\nabla u|^{2} e^{-f}dv_{g}\leq 4\int_{M}u^{2}|\nabla
\phi|^{2} e^{-f}dv_{g}
\]
\[
\leq \frac{4C^{2}}{r^{2}}\int_{B_{1+r}-B_{1}}u^{2}e^{-f}dv_{g} \leq
\frac{4C^{2}}{r^{2}}\int_{M}u^{2}e^{-f}dv_{g}.
\]
But let $\alpha<\frac{\lambda}{4}$, and thus $u^{2}(x)\leq
A e^{2\alpha d(x,p)^{2}}$. So in exponential coordinates we compute
\[
\int_{M}u^{2}e^{-f}dv_{g}\leq\int_{S^{n-1}}\int_{0}^{\infty}
e^{-(\frac{\lambda}{2}-2\alpha)r^{2}+a r+b}dr ds_{n-1} < \infty
\]
for some constants $a$ and $b$ by lemma \ref{growth_sol}.  Thus we can tend
$r\rightarrow\infty$ to get
\[
\int_{B(x,1)}|\nabla u|^{2} e^{-f}dv_{g} = 0
\]
Since $x$ was arbitrary, $|\nabla u|=0$ and thus $u$=constant.
\end{proof}
\begin{proof}[Proof of (2)]
This is much the same.  Since $u$ is bounded above we can assume, by
adding a constant, that $sup$ $u = 1$. Let $u^{+}(x)=max(u(x),0)$.
Let $x\in M$ such that $u(x)>0$ and $\phi$ as in the last part with
center $x$. Then our equation $\nabla^{i}(e^{-f}\nabla_{i}u)\geq 0$
gives

\[
-\int(2\phi u^{+}\nabla^{i}\phi\nabla_{i}u+\phi^{2}\nabla^{i}
u^{+}\nabla_{i} u) e^{-f}dv_{g} \geq 0
\]
so that
\[
\int_{M}\phi^{2}|\nabla u^{+}|^{2} e^{-f}dv_{g} \leq
\frac{4C^{2}}{r^{2}}\int_{M}(u^{+})^{2}e^{-f}dv_{g}
\]
But $u^{+}$ is bounded and $\int_{M}e^{-f}dv_{g}$ is finite.  So we
may limit out, using monotone convergence, to get $\int_{M}|\nabla
u^{+}|^{2} e^{-f}dv_{g}=0$.  So $u^{+}$ is constant.  Since
$u(x)>0$, $u$ is constant.
\end{proof}

As a consequence of the above we get the following

\begin{lem}
Let $(M,g,f)$ complete shrinking soliton with $Rc,\nabla^{2}f\geq 0$, then the eigenvalues of $Rc$ are all either $0$ or $\lambda$.
\end{lem}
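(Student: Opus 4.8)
The plan is to produce from the soliton equation a bounded function whose $f$-Laplacian has a sign, and then invoke part (2) of the previous Proposition.

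First I would record what the curvature hypotheses give pointwise. Rewriting the soliton equation as $Rc=\lambda g-\nabla^{2}f$, the assumption $\nabla^{2}f\geq 0$ forces every eigenvalue $\mu_{i}$ of $Rc$ to satisfy $\mu_{i}\leq\lambda$, while $Rc\geq 0$ gives $\mu_{i}\geq 0$. Consequently $|Rc|\leq\sqrt{n}\,\lambda$ is bounded, the scalar curvature obeys $0\leq R\leq n\lambda$ and is bounded, and since $\mu_{i}(\lambda-\mu_{i})\geq 0$ for each $i$ we obtain the pointwise inequality $|Rc|^{2}=\sum_{i}\mu_{i}^{2}\leq\lambda\sum_{i}\mu_{i}=\lambda R$. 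Moreover $Rc+\nabla^{2}f=\lambda g\geq\lambda g$ with $\lambda>0$ and $Rc$ bounded, so $(M,g,f)$ satisfies the hypotheses of the previous Proposition.

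Next I would bring in the standard gradient shrinking soliton identities: tracing the second Bianchi identity against $Rc+\nabla^{2}f=\lambda g$ gives $\nabla R=2Rc(\nabla f)$ and the classical second-order identity $\triangle R-\langle\nabla f,\nabla R\rangle=2\lambda R-2|Rc|^{2}$, i.e. $\triangle_{f}R=2\lambda R-2|Rc|^{2}$. Combined with $|Rc|^{2}\leq\lambda R$ from the first step this yields $\triangle_{f}R\geq 0$ with $R$ bounded, so by part (2) of the previous Proposition $R$ is constant. Then $\triangle_{f}R=0$, hence $|Rc|^{2}\equiv\lambda R$; at each point this reads $\sum_{i}\mu_{i}(\lambda-\mu_{i})=0$ with every summand nonnegative, whence $\mu_{i}\in\{0,\lambda\}$ for each $i$, which is exactly the claim.

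I do not expect a genuine obstacle. The only non-formal ingredient is the identity $\triangle_{f}R=2\lambda R-2|Rc|^{2}$, a classical computation for gradient solitons, together with the (automatic) verification that $Rc$ is bounded so that the Liouville Proposition applies. If one prefers not to invoke the Liouville statement, the same conclusion follows by integrating $\mathrm{div}(e^{-f}\nabla R)=e^{-f}(2\lambda R-2|Rc|^{2})$ over $M$ (the boundary terms vanishing because $R$ and $|Rc|$ are bounded, $|\nabla R|=2|Rc(\nabla f)|$ grows at most linearly, and $f$ grows at least quadratically by Lemma \ref{growth_sol}, so that $e^{-f}dv_{g}$ is finite by Remark \ref{ffg}); this gives $\int_{M}(\lambda R-|Rc|^{2})e^{-f}dv_{g}=0$, which with the pointwise inequality $\lambda R\geq|Rc|^{2}$ again forces $|Rc|^{2}\equiv\lambda R$ and finishes the argument in the same way.
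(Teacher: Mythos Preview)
Your proof is correct and follows essentially the same route as the paper: both derive $\triangle_{f}R=2(\lambda R-|Rc|^{2})$, use $0\leq\mu_{i}\leq\lambda$ to see the right-hand side is nonnegative, apply the Liouville result (part (2) of the Proposition) to the bounded subsolution $R$ to conclude it is constant, and then read off that each eigenvalue is $0$ or $\lambda$. Your presentation is slightly more careful in verifying the hypotheses of the Liouville proposition and adds a pleasant alternative via integration, but the argument is the same.
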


\begin{proof}
We have $Rc\leq Rc+\nabla^{2}f = \lambda g$.

The following computation is useful:
\[
\nabla^{i}R_{ij}+\nabla^{i}\nabla_{i}\nabla_{j}f=0
\]
\[
\frac{1}{2}\nabla_{j}R + \nabla_{j}(-R-n\lambda) +
Rc_{jk}\nabla^{k}f = 0
\]
\begin{equation}
\nabla_{i}R = 2Rc_{ij}\nabla^{j} f
\end{equation}
Now if we take the divergence of this we get
\begin{equation}
\triangle_{f}R = 2(\lambda R-|Rc|^{2})
\label{R_eq}
\end{equation}

Now if $\partial_{i}$ is an eigenbasis for $Rc$ we write the rhs of
(\ref{R_eq}) as $(\lambda R-|Rc|^{2}) = \Sigma
R_{ii}(\lambda-R_{ii})\geq 0$ under our assumptions.  In particular
the scalar curvature is a bounded subsolution to $\triangle_{f}$,
and thus must be constant.  Plugging this in we see that $\Sigma
R_{ii}(\lambda-R_{ii})= 0$, which under our assumptions implies that
each term is zero and thus every eigenvalue of $Rc$ is either $0$ or
$\lambda$.
\end{proof}

Now we prove Theorem \ref{hess_sol}:

\begin{proof}[Theorem \ref{hess_sol}]
If $\nabla^{2}f>0$ then we must have that $0\leq Rc < \lambda$.  By the last lemma we must then have that $Rc=0$.  We proved a Ricci Flat shrinking soliton is isometric to $(\mathds{R}^{n}),g_{0}$, and hence we are done.
\end{proof}

\section{Shrinking Solitons where the Ricci Tensor has a zero}

The goal of this section is to prove the following.

\begin{thm}
Let $(M^{4},g,f)$ be a shrinking soliton with bounded
nonnegative sectional curvature.  Assume the Ricci Tensor has a zero
eigenvalue at some point, then $(M,g)$ is isometric to a finite
quotient of $S^{2}\times\mathds{R}^{2}$, $S^{3}\times\mathds{R}$ or $\mathds{R}^{4}$.
\end{thm}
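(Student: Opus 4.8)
The plan is to reduce the classification to a splitting argument driven by the zero of the Ricci tensor, combined with the already-established three-dimensional classification (Corollary \ref{cor_mthm2}) and the rigidity theorem for positive soliton Hessian (Theorem \ref{hess_sol}). First I would set up the basic structure: since $(M^4,g,f)$ has bounded nonnegative sectional curvature, by the remarks following Lemma \ref{growth_sol} the function $f$ is proper, bounded below, grows quadratically, and has no critical points outside a compact set; in particular the associated Ricci flow is $(C,\kappa)$-controlled by Theorem \ref{mthm4}. If $(M,g)$ is flat we are done, so assume otherwise. The key dichotomy is whether $\nabla^2 f$ is everywhere positive definite or degenerates somewhere.

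If $\nabla^2 f > 0$ everywhere, then Theorem \ref{hess_sol} immediately gives $(M,g)\cong(\mathds{R}^4,g_0)$. So I would focus on the case where $\nabla^2 f$ has a zero eigenvalue at some point $q\in M$. The strategy here is to show that the kernel direction of $\nabla^2 f$ propagates to a parallel vector field, forcing a splitting. Using the soliton equation $Rc+\nabla^2 f=\lambda g$ and the identity $R_{ijkq}\nabla^q f = \nabla_j R_{ik}-\nabla_i R_{jk}$ (derived as in the Ricci-flat lemma), together with the second Bianchi identity and the evolution/elliptic equations for $Rc$, one should be able to run a strong maximum principle argument on the smallest eigenvalue of $\nabla^2 f$ (or equivalently the largest eigenvalue of $Rc$, which by the lemma above is $\le\lambda$ under $Rc\ge 0$, $\nabla^2 f\ge 0$). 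Concretely, I expect that the set where $\nabla^2 f$ is degenerate is open and closed, hence all of $M$; the degenerate distribution is then shown to be parallel and $f$-invariant, so that $(M,g)$ splits isometrically as $(\mathds{R}^k,g_0)\times(N^{4-k},h)$ with $f$ restricting to a normalized shrinking soliton function on $N$ with $\nabla^2_h f > 0$ (after subtracting the quadratic part on the flat factor), where $k\in\{1,2,3,4\}$.

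Having obtained such a splitting, I would then invoke dimension reduction. If $k=4$, $M$ is flat, contradiction. If $k\ge 2$ is impossible unless we land on $\mathds{R}^2\times S^2/\Gamma$ or $\mathds{R}^4$: when $k=2$, $N^2$ is a two-dimensional shrinking soliton with $\nabla^2 f>0$ and nonnegative curvature, which must be a quotient of $S^2$ (the round sphere) — a compact soliton has $\nabla^2 f > 0$ only if... actually here one uses that a compact two-dimensional shrinking soliton is the round $S^2$, giving $\mathds{R}^2\times S^2/\Gamma$; when $k=3$, $N^1=\mathds{R}$ already and so $M$ is flat, contradiction. The remaining case $k=1$: $N^3$ is a three-dimensional shrinking soliton with $|Rm|\le C$ and $Rc\ge 0$, so by Corollary \ref{cor_mthm2} it is a finite quotient of $\mathds{R}^3$, $S^3$, or $\mathds{R}\times S^2$; combined with the $\mathds{R}$-factor and absorbing quotients appropriately, $(M,g)$ is a finite quotient of $\mathds{R}^4$, $S^3\times\mathds{R}$, or $S^2\times\mathds{R}^2$, as claimed.

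The main obstacle I anticipate is the propagation step: showing that a single degenerate point of $\nabla^2 f$ forces the kernel direction to be globally parallel and to split the manifold. The curvature identity $R(\cdot,\cdot,\cdot,\nabla f)$ together with $Rc\ge 0$ controls how $\nabla^2 f$ evolves along integral curves of $\nabla f$, but turning this into a clean strong-maximum-principle statement — ruling out that the degeneracy is confined to a lower-dimensional set — requires care with the fact that $\nabla^2 f$ is only a symmetric $2$-tensor, not obviously satisfying a nice parabolic equation; one likely differentiates the soliton equation and uses the Ricci flow evolution of $Rc$ on the associated flow, then transfers back. A secondary subtlety is correctly bookkeeping the finite quotients so that the splitting descends to (or lifts from) a cover, which is handled by the finite fundamental group observation in Remark \ref{ffg}.
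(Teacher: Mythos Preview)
Your dichotomy on $\nabla^{2}f$ never invokes the hypothesis that $Rc$ has a zero eigenvalue, and the propagation step you flag as the main obstacle is in fact false as stated. A zero of $\nabla^{2}f$ is, via the soliton equation, an eigenvector of $Rc$ with eigenvalue $\lambda$, not $0$; even if this top eigenspace were parallel, the corresponding de Rham factor has no reason to be flat. On $S^{3}\times\mathds{R}$ with $f=\frac{\lambda}{2}t^{2}$ one has $\nabla^{2}f=\lambda\,dt^{2}$, so $\ker\nabla^{2}f$ is the tangent bundle of $S^{3}$, which is parallel but certainly does not split off an $\mathds{R}^{3}$. Your claimed splitting $(\mathds{R}^{k},g_{0})\times(N^{4-k},h)$ with $\nabla^{2}_{h}f>0$ therefore cannot be extracted from a degeneracy of $\nabla^{2}f$, and the subsequent case analysis collapses.

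The paper instead uses the hypothesis directly. Along the associated Ricci flow one has $\frac{\partial}{\partial t}Rc_{ab}=\triangle Rc_{ab}+2(R_{aibj}R^{ij}-R_{ai}R^{i}_{\;b})$; if $Rc(v,v)=0$ at some point then the reaction term in the $v$-direction is $\sum_{i}\sec(v,e_{i})R_{ii}\geq 0$ by nonnegative sectional curvature and $Rc\geq 0$. Since the flow moves only by scaling and diffeomorphism, a zero of $Rc$ at one time persists for all times, and Hamilton's strong maximum principle (\cite{Ha}) forces the kernel of $Rc$ to be a parallel distribution. Passing to the universal cover (finite by Remark~\ref{ffg}) the manifold splits isometrically with a Ricci-flat factor tangent to $\ker Rc$; with $\sec\geq 0$ in dimension at most three this factor is flat, and the complementary factor is a three-dimensional shrinking soliton to which Corollary~\ref{cor_mthm2} applies. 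The point you missed is that it is the kernel of $Rc$, not of $\nabla^{2}f$, that both satisfies the null-eigenvector condition needed for the maximum principle and yields a flat factor upon splitting.
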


\begin{proof}
Let $p\in M$, $v\in T_{p}M$ such that $v$ is a zero eigenvalue of $Rc$.  If $e^{i}$ is an orthonormal eigenbasis for $Rc$ at $p$ we see that $\frac{\partial Rc}{\partial t}(v,v) = \triangle Rc(v,v) +2(R_{ivjv}R^{ij}-R_{vi}R^{i}_{ v})$, where then $R_{ivjv}R^{ij}-R_{vi}R^{i}_{ v} = R_{iviv}R^{ii}-0=\Sigma sec(v,i)R_{i i} \geq 0$.  Hence since $g(t)$ only change by scaling and diffeomorphisms, if $\exists$ at zero of the Ricci at some point and time then there is a zero at every point in time, and we can apply Hamilton's maximum principle as in \cite{Ha} to see there must be a zero at every point with the zero eigenspace parallel translation invariant.  After passing to the universal cover (which is finite by remark\ref{ffg}) we see the soliton splits and we can use Perelman's classification in dimension three.
\end{proof}

\section{Four Solitons with Positive Curvature}

We finish the proof of the main theorem with the following.

\begin{lem}
Let $(M^{4},g,f)$ be a shrinking soliton with bounded nonnegative curvature operator and $Rc>0$.  Then $M$ is compact.
\end{lem}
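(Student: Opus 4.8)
The plan is to argue by contradiction, so assume $(M^{4},g,f)$ is noncompact (after normalizing the soliton). By Corollary \ref{asym_int}, along an integral curve of $\nabla f$ running to infinity one has $(M,g,x_{n})\to(\mathds{R},ds^{2})\times(N^{3},h,y)$ with $(N^{3},h)$ a normalized $\kappa'$-noncollapsed three-dimensional shrinking soliton with $Rc\geq0$, so by Corollary \ref{cor_mthm2} $N^{3}$ is a finite quotient of $\mathds{R}^{3}$, $S^{3}$ or $\mathds{R}\times S^{2}$, and in each case the largest eigenvalue of $Rc$ on the limit is $\leq\lambda$. Differentiating the soliton identity $\nabla_{i}R=2R_{ij}\nabla^{j}f$ along $\nabla f$ gives $\nabla_{\nabla f}R=2Rc(\nabla f,\nabla f)>0$ wherever $\nabla f\neq0$, so $R$ is strictly increasing along every integral curve; since $f$ is proper with critical points in a compact set, each point of $M$ lies on an integral curve issuing from a critical point $p$, whence $R\leq R_{\infty}:=\lim R(x_{n})\geq R(p)>0$. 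In particular $N^{3}$ is not flat, and by Proposition \ref{asym_four} together with Lemma \ref{asym_four2} the cross-section type, hence $R_{\infty}$, is the same along every integral curve; moreover $M$ is, outside a compact set, uniformly close to the corresponding model, so $R\to R_{\infty}$ and $r_{1}\to\lambda$ at infinity, where $r_{1}$ denotes the largest eigenvalue of $Rc$. Thus globally $R\leq 2\lambda$ if $N^{3}$ is a quotient of $\mathds{R}\times S^{2}$, and $R\leq 3\lambda$ if $N^{3}$ is a quotient of $S^{3}$.

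I would then dichotomize on whether $\nabla^{2}f$ is positive definite everywhere. If it is, then $Rc\geq0$ and $\nabla^{2}f>0$, so Theorem \ref{hess_sol} forces $(M,g)\cong(\mathds{R}^{4},g_{0})$, contradicting $Rc>0$. Otherwise $\nabla^{2}f$ has a non-positive eigenvalue somewhere, i.e. $r_{1}\geq\lambda$ at some point; since $r_{1}\leq R$ is bounded and $r_{1}\to\lambda$ at infinity, $\sup_{M}r_{1}\geq\lambda$ is attained at an interior point $q$. At $q$, with $v$ a unit $r_{1}$-eigenvector and $V$ a parallel extension of $v$, the function $Rc(V,V)$ has a local maximum at $q$, so $\triangle_{f}\bigl(Rc(V,V)\bigr)(q)\leq0$; using the standard gradient-soliton identity $\triangle_{f}R_{ij}=2\lambda R_{ij}-2R_{ikjl}R^{kl}$ (whose trace is the equation $\triangle_{f}R=2(\lambda R-|Rc|^{2})$ already established) this reads $\lambda r_{1}\leq\sum_{k\neq1}R_{kvkv}\,r_{k}$, where the sectional curvatures $R_{kvkv}\geq0$ (nonnegativity of the curvature operator) sum to $Rc(v,v)=r_{1}$. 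Hence $\lambda r_{1}\leq r_{1}r_{2}$, so the second eigenvalue satisfies $r_{2}(q)\geq\lambda$, and therefore $R(q)\geq r_{1}(q)+r_{2}(q)+r_{3}(q)+r_{4}(q)>2\lambda$ since $Rc>0$. When $R\leq 2\lambda$ this is a contradiction, so the case where the cross-section is a quotient of $\mathds{R}\times S^{2}$ is finished.

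There remains the case where the asymptotic cross-section is a quotient of $S^{3}$, i.e. $M$ is noncompact and asymptotic to the round cylinder $\mathds{R}\times S^{3}/\Gamma$, where only $R\leq 3\lambda$ is available and the maximum-principle argument does not close. Here I would turn to the soliton hypersurfaces $N_{s}=f^{-1}(s)$: for $s>r_{f}$ they are connected (otherwise $M$ splits off an $\mathds{R}$-factor by Corollary \ref{connected}, contradicting $Rc>0$) and, for $s$ large, lie in the cylindrical region, hence are diffeomorphic to $S^{3}/\Gamma$ with induced metrics converging to the round $S^{3}/\Gamma$ with $Rc=\lambda g$, so $Vol(N_{s})\to Vol\bigl(S^{3}/\Gamma\bigr)$. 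By the mean-curvature estimate of Section~5, which now holds under $Rc\geq0$, the mean curvature of $N_{s}$ with respect to $\nabla f/|\nabla f|$ is strictly positive because $Rc>0$, so $\tfrac{d}{ds}Vol(N_{s})>0$ and $Vol(N_{s})<Vol\bigl(S^{3}/\Gamma\bigr)$ for every finite $s$. The plan is to obtain the reverse inequality by computing the intrinsic curvature of $N_{s}$ from the Gauss equation together with the soliton equation and the bound $R\leq3\lambda$, and feeding this curvature control into a volume comparison on $N_{s}$. \emph{I expect this to be the main obstacle}: in the three-dimensional classification the level sets are surfaces and Gauss--Bonnet closes the argument immediately, whereas here they are three-manifolds carrying no topological volume identity, so the pointwise curvature estimate on $N_{s}$ must be squeezed out of the soliton structure directly --- alternatively this case can be disposed of using Hamilton's strong maximum principle for the curvature operator, which under $Rc>0$ leaves only the locally symmetric and Kähler alternatives, each of which can then be excluded separately.
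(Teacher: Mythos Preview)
Your overall architecture matches the paper's: assume $M$ noncompact, produce an asymptotic splitting $\mathds{R}\times N^{3}$ via Corollary~\ref{asym_int}, and split into cases according to whether $N^{3}$ is a space form or a quotient of $\mathds{R}\times S^{2}$.

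In the $\mathds{R}\times S^{2}$ case your argument works in spirit but is more elaborate than necessary, and there is a small gap: you assert that $r_{1}\to\lambda$ uniformly at infinity (to guarantee attainment of $\sup_{M} r_{1}$), but Lemma~\ref{asym_four2} only gives subsequential convergence along each integral curve, not uniform convergence on $M$; Proposition~\ref{asym_four} is stated only for the $S^{3}$ case. The paper bypasses this entirely by observing that in dimension four with nonnegative curvature operator one always has $r_{1}\leq R/2$ (indeed $r_{2}+r_{3}+r_{4}-r_{1}=2(K_{23}+K_{24}+K_{34})\geq 0$), so the global bound $R<2\lambda$ already forces $r_{1}<\lambda$, i.e.\ $\nabla^{2}f>0$ everywhere, and Theorem~\ref{hess_sol} applies directly without any dichotomy. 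With that one-line observation your maximum-principle branch is never entered in this case.

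The genuine gap is the $S^{3}/\Gamma$ case. Your proposed volume comparison on the level sets $N_{s}$ is, as you suspect, the wrong tool: the Gauss--Bonnet miracle from the three-dimensional proof has no analogue here, and there is no clean way to convert the Gauss-equation curvature estimate into a strict volume inequality on a three-manifold. Your alternative via Hamilton's strong maximum principle for the curvature operator is too vague as stated --- even if the holonomy reduces, ruling out the noncompact K\"ahler possibility with $Rc>0$ is itself a nontrivial classification problem. The paper's actual argument passes to \emph{isotropic curvature}: since the curvature operator is nonnegative, the isotropic curvature is nonnegative everywhere; by Proposition~\ref{asym_four} the metric is uniformly close to $\mathds{R}\times S^{3}/\Gamma$ outside a large ball and hence has isotropic curvature bounded below by a fixed $\delta>0$ there; and inside the ball, Hamilton's theorem \cite{Ha} says a zero of the isotropic curvature at one point would force a zero everywhere, which is already excluded. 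Thus the isotropic curvature is uniformly positive on all of $M$, and the Ni--Wallach classification \cite{NiWa} of four-dimensional shrinking solitons with positive isotropic curvature forces $(M,g)$ to be a finite quotient of $\mathds{R}\times S^{3}$, contradicting $Rc>0$. This is the missing idea.
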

\begin{proof}
Assume $M$ is noncompact.  Let $x_{n}\rightarrow\infty$ such that $(M,g,x_{n})\rightarrow\mathds{R}\times N$ for $N$ a shrinking soliton (such $x_{n}$ exist by Corollary \ref{asym_int}).  Let $F=f^{-1}(-\infty,r)$ where $r>r_{f}$, so that $\partial F=N_{r}$.  We can assume $N_{r}$ has only one component, else we have that $M$ has more than one end and we can split, contradicting strictly positive curvature.  We have two cases:

First assume $N$ is a finite quotient of $\mathds{R}\times S^{2}$.  Then outside $F$, when $f$ has no more critical points, we see that since $R$ is increasing along the integral curves of $f$ then by lemma \ref{asym_four2} we have that $R< 2\lambda$ and hence $Rc\leq R/2< \lambda$.  In particular outside this compact set we have $\nabla^{2}f=\lambda g-Rc>0$.  We show $\nabla^{2}f>0$ on all of $M$, and then it will follow Theorem \ref{hess_sol} that $(M,g)$ is isometric to $(\mathds{R}^{n},g_{0})$, and hence can't have strictly positive curvature and is a contradiction.  Define a map $h:\partial F\rightarrow M$ as follows:  For $x\in\partial F$, $x$ lies in a unique integral curve of $f$.  We can follow the integral curve backwards, and since $F$ compact, we hit a unique critical point $p_{x}$ of $f$.  We define $h(x)=p_{x}$.  Now note that if $x\in\partial F$ then $\nabla^{2}f(h(x))\geq \lambda g-Rc\geq \frac{\lambda}{2}(2\lambda-R)g>\delta>0$, since $R$ only decreases as we move backwards along an integral curve.  So the gradient flow is attracting at $p$ and hence if $h(x)=p$, then by the continuity of initial conditions $\exists \epsilon>0$ such that if $y\in\partial F$ with $d(x,y)<\epsilon$ then $h(y)=h(x)=p$.  Hence $h^{-1}(p)\equiv D$ is open.  But let $z\in \partial F$ be in the closure of $D$.  The same argument tells us $h^{-1}(h(z))$ is also open, and since $z$ is in the closure of $D$ we have $h(z)=p$.  Hence $D$ is closed.  Since $\partial F$ is connected we have $h(x)=p$ $\forall x\in \partial F$.  Hence there is only one critical point $p$ for $f$ and $\nabla^{2}f>0$.

Now assume $N$ is a finite quotient of $S^{3}$.  Pick $\epsilon>0$ small, $p\in M$, and by proposition (\ref{asym_four}) let $s>0$ such that if $x\not\in B_{s}(p)$ then $(M,g,x)$ is $\epsilon$ close to $\mathds{R}\times N$.  In particular, $\exists \delta>0$ such that for all $x\not\in B_{s}(p)$ the isotropic curvature of $(M,g)$ at $x$ is bounded uniformly from below by $\delta$.  If $x\in B_{s}(p)$ then certainly the isotropic curvature is nonnegative since $Rm\geq 0$.  If there is a zero at some point then by Hamilton's result \cite{Ha} we see there must be a zero everywhere, which is not true.  Hence the isotropic is positive inside $B_{s}(p)$ and uniformly positive on all of $M$.  Using the result by \cite{NiWa}, we see $(M,g)$ is isometric to a finite quotient of  $\mathds{R}\times S^{3}$, hence does not have strictly positive curvature, also a contradiction.
\end{proof}

\section*{Acknowledgements} The author would like very much to thank Gang Tian
for many helpful discussions and encouragement during this project, as well as Andre Neves for helpful conversations.  Additionally the author is grateful to Hans-Joachim Hein and Richard Bamler for help with the preparing of and discussing of this paper.

%---------------------------------------------------------------------------------

Department of Mathematics, Princeton University, Princeton, NJ 08544 

email: {\tt anaber@math.princeton.edu}

\begin{thebibliography}{1}

  \bibitem{BE} D. Bakry and M. \'{E}mery, Diffusions Hypercontractives, in: {\em S\'{e}minaire de probabilit\'{e}s} XIX, 1983/84, 177-206, Lecture Notes in Math. 1123, Springer, Berlin, 1985.

  \bibitem{Ca}    H. Cao, Geometry of Ricci Solitons, {\em Chinese Annals of Mathematics} 10(2006) 121-142

  \bibitem{CaZh} H. Cao and X. Zhu, A Complete Proof of the Poincare and Geometrization Conjectures - Applications of Hamilton-Perelman Theory of the Ricci Flow, {\em Asian J. Math.} 10(2006) 165-492

  \bibitem{ChKn} B. Chow and D. Knopf, The Ricci Flow: An Introduction, {\em Mathematical Surverys and Monographs}, Volume 110

  \bibitem{ChZh}  B. Chen and X. Zhu, Uniqueness of the Ricci Flow on Complete Noncompact Manifolds, {\em arXiv:math/0505447} , 05/2005.

  \bibitem{En}  J. Enders, Reduced distance based at a singular time in the Ricci Flow, {\em arXiv:math/0711.0558} 11/2007.

  \bibitem{Ha}  R. Hamilton, Four Manifolds with Positive Isotropic Curvature, {\em Comm. Anal. Geom.} 5(1997) 1-92.

  \bibitem{L}  J. Lott, Some geometric properties of the Bakry-\'{E}mery-Ricci tensor, {\em Comment. Math. Helv.} 78(2003) 865-883.

  \bibitem{MT} J. Morgan and G. Tian, Ricci Flow and the Poincar\'{e} Conjecture, {\em arXiv:math/0607607}, 07/2006.

  \bibitem{Na}  A. Naber, Some Geometry and Analysis on Ricci Solitons, {\em arXiv:math/0612532}, 12/2006.

  \bibitem{Ni} L. Ni, Ancient solution to Kahler-Ricci flow, {\em arXiv:math/0502494}, 02/2005.

  \bibitem{NiWa} L. Ni and N. Wallach, On 4-dimensional gradient shrinking solitons, {\em arXiv:math/0710.3195}, 10/2007.

  \bibitem{NiWa2} L. Ni and N. Wallach, On a Classification of Gradient Shrinking Solitons, {\em arXiv:math/0710.3194}, 10/2007.

  \bibitem{P} G. Perelman, Ricci flow with surgery on three-manifolds, {\em arXiv:math/0303109}, 03/2003.

  \bibitem{PW} P. Petersen and W. Wylie, Rigidity of gradient Ricci Solitons, {\em arXiv:math/0710.3174}, 10/2007.

  \bibitem{S}  N. Sesum, Convergence of the Ricci Flow toward a Unique Soliton, {\em arXiv:math/0405398} 05/2004.

  \bibitem{Sh}  W. Shi, Deforming the metric on a complete Riemannian Manifold, {\em J. Diff. Geom.} 30(1989) 223-301.

  \bibitem{W}   W. Wylie, Complete Shrinking Ricci Solitons have Finite Fundamental Group, {\em arXiv:math/0704.0317}, 04/2007.

  \end{thebibliography}
\end{document}